%
%
%

\documentclass{./styles/svproc}
%
%

\usepackage{url}
\usepackage{placeins}
\usepackage{color}
\usepackage{xcolor}
\usepackage[toc,page]{appendix}

\usepackage{latexsym,amsmath, amscd, amssymb}

\newtheorem{assumption}{Assumption}

\usepackage[colorlinks=false]{hyperref}
\usepackage{cleveref}
\crefname{thm}{Theorem}{Theorems}
\crefname{assumption}{Assumption}{Assumptions}

\crefformat{equation}{(#2#1#3)}
\crefname{theorem}{Theorem}{Theorems}
\crefname{proposition}{Proposition}{Propositions}
\crefname{lemma}{Lemma}{Lemmas}
\crefname{collorary}{Collorary}{Colloraries}
\crefname{assumption}{Assumption}{Assumptions}
\crefname{remark}{Remark}{Remark}
\crefname{example}{Example}{Example}
\crefname{definition}{Definition}{Definitions}
\crefname{section}{Section}{Sections}
\crefname{figure}{Figure}{Figures}
\crefname{chapter}{Chapter}{Chapters}
\crefname{table}{Table}{Tables}

\usepackage{mathtools,xparse}
\usepackage{mathrsfs}
\usepackage{enumitem} 
\usepackage{cleveref}
\usepackage{placeins}
\usepackage{bm}

\def\coupMat{{\bm A}_{c}}
\def\heatMat{{\bm A}_{h}}

\def\opnorm{\mathcal{B}}
\def\drift{{\bf a}}
\def\diffusion{{\bf b}}
\newcommand\given[1][]{\:#1\vert\:}

\def\EE{{ \mathbb{E}}}
\def\RR{{\mathbb R}}

\def\trans{{T}}
\def\NN{{\mathbb{N}}}

\def\({\left (}
\def\){\right )}

\def\xDomain{{\Omega_{\x}}}
\def\zDomain{{\Omega_{\z}}}
\def\qDomain{{\Omega_{\q}}}
\def\pDomain{{\Omega_{\p}}}
\def\sDomain{{\Omega_{\s}}}

\def\bigO{{O}}
\def\bT{\Theta}

\def\EE{{ \mathbb{E}}}
\def\RR{{\mathbb R}}

\def\qDet{{ \mathscr{D}_{\q}}}
\def\zDet{{ \mathscr{D}_{\z}}}
\def\qStoch{{ \mathscr{G}_{\q}}}
\def\zStoch{{ \mathscr{G}_{\z} }}

\def\K{{ \bf K }}

\DeclarePairedDelimiter{\abs}{\lvert}{\rvert}
\DeclarePairedDelimiter{\norm}{\lVert}{\rVert}
\NewDocumentCommand{\normLinfty}{ s O{} m }{%
  \IfBooleanTF{#1}{\norm*{#3}}{\norm[#2]{#3}}_{L_\infty}%
}
\NewDocumentCommand{\normLtwo}{ s O{} m }{%
  \IfBooleanTF{#1}{\norm*{#3}}{\norm[#2]{#3}}_{L_2}%
}
\NewDocumentCommand{\normEuc}{ s O{} m }{%
  \IfBooleanTF{#1}{\norm*{#3}}{\norm[#2]{#3}}_{2}%
}
\NewDocumentCommand{\normFrob}{ s O{} m }{%
  \IfBooleanTF{#1}{\norm*{#3}}{\norm[#2]{#3}}_{{\rm F}}%
}
\def\qh{\tilde{\bm q}}
\def\ph{\tilde{\bm p}}
\def\bu{{\bm u}}
\def\bv{{\bm v}}
\def\bf{\bm}
\def\W{{\bf W}}

\def\I{{\bf I}}

\def\G{{\bf G}}

\def\dd{{\rm d}}

\def\q{{ \bf q}}

\def\M{{ \bf M}}
\newcommand{\etabf}{{\boldsymbol \eta}}
\def\p{{ \bf p}}
\def\s{{ \bf s}}
\def\x{{ \bf x}}
\def\z{{ \bf z}}

\def\Q{{ \bf Q}}
\def\C{{ \bf C}}
\def\0{{ \bf 0}}

\def\Lc{\mathcal{L}}

\def\Kc{\mathcal{K}}

\DeclareMathOperator*{\argmax}{arg\,max}
\def\lspan{\mathrm{lin}}

\def\Lcgq{\mathcal{\Lc}_{\rm GLE}} 

\def\etaq{\widetilde{\etabf}}
\def\Kbf{{\bf K}}
\def\Kq{\widetilde{\Kbf}}

\def\Gammabf{{  \bm{\Gamma} }} 
\def\Sigmabf{{  \bm{\Sigma} }} 
\def\Gammaq{{\bm{\widetilde{\Gamma}}}} 

\def\Sigmaq{{  \bm{\widetilde{\Sigma}} }}

\DeclarePairedDelimiter{\inner}{\langle}{\rangle}

\def\I{\bm I}

\usepackage{chngcntr}
\usepackage{apptools}
\AtAppendix{\counterwithin{lemma}{section}}
\AtAppendix{\counterwithin{proposition}{section}}
\AtAppendix{\counterwithin{theorem}{section}}
\AtAppendix{\counterwithin{assumption}{section}}

\begin{document}
\mainmatter              
\title{Ergodic properties of quasi-Markovian generalized Langevin equations with configuration dependent noise and non-conservative force}
\titlerunning{Generalized Langevin equation}  
%
\author{ Benedict Leimkuhler\inst{1} \and Matthias Sachs\inst{2}
}
\authorrunning{Leimkuhler and Sachs} 
%
\tocauthor{Benedict Leimkuhler and Matthias Sachs}
\institute{The School of Mathematics and the Maxwell Institute of Mathematical Sciences, James Clerk Maxwell Building, University of Edinburgh, Edinburgh EH9 3FD\\
\email{b.leimkuhler@ed.ac.uk},
\and
Department of Mathematics, Duke University, Box 90320, Durham NC 27708, USA; and the
Statistical and Applied Mathematical Sciences Institute (SAMSI), Durham NC 27709, USA \\
\email{msachs@math.duke.edu},
}

\maketitle              

\begin{abstract}
We discuss the ergodic properties of quasi-Markovian stochastic differential equations, providing general conditions that ensure existence and uniqueness of a smooth invariant distribution and exponential convergence of the evolution operator in suitably weighted $L^{\infty}$ spaces, which implies the validity of central limit theorem for the respective solution processes. The main new result is an ergodicity condition for the generalized Langevin equation with configuration-dependent noise and (non-)conservative force.

\keywords{generalized Langevin equation, heat-bath, quasi-Markovian model, sampling, molecular dynamics, ergodicity, central limit theorem, non-equilibrium, Mori-Zwanzig formalism, reduced model}
\end{abstract}
\section{Introduction}\label{sec:introduction}
Generalized Langevin equations (GLE) arise from model reduction and have many applications such as sampling of molecular systems \cite{Ceriotti2009,Ceriotti2010,Morrone2011,gle4md,Wu2015}, atom-surface scattering \cite{DoDi1976}, anomalous diffusion in fluids \cite{HoMc2017}, modeling of polymer melts \cite{li2017computing}, chromosome segmentation in e coli \cite{LaKuWiSp2015}, and the modelling of coarse grained particle dynamics \cite{Givon2004,Li2015a}.
The GLE is a non-Markovian formulation, meaning that the evolution of the current state depends not only on the state itself but on the state history. The system is typically formulated with memory terms describing friction with the environment and stochastic forcing.  The presence of memory complicates both the analysis of the equation and its numerical solution.   In this article, we recall the derivation of the GLE as the result of Mori-Zwanzig reduction of large system to model the dynamics of a subset of the variables.   We consider the ergodicity of the equation (existence of a unique invariant distribution and exponential convergence of the associated semigroup in a suitably weighted $L^{\infty}$ space), providing conditions for its validity in case the coefficients of friction and noise depend directly on the reduced position variables.   

\subsection{The generalized Langevin equation}
Consider the situation of an open system exchanging energy with a heat bath.  If there is a strong time scale separation between the dynamics of the heat bath and the explicitly modelled degrees of freedom, the  exchange of energy between these two systems is well modelled by a Markovian process, i.e., dynamic observables such as transport coefficients and first passage times  can be  well reproduced by a simple Markovian approximation of the heat bath. 

By contrast, if we consider a system consisting of a distinguished particle surrounded by collection of particles of approximately the same mass, then a reduced model where the  interaction between the distinguished particle and the solvent particles is replaced by a simple Langevin equation would lead to a poor approximation of the dynamics of the distinguished particle.

In such modelling situations it is necessary to explicitly incorporate memory effects, i.e., non-Markovian random forces and history dependent dissipation. The framework in which such models are typically formulated is that of the generalized Langevin equation. In this article we consider two different types of generalized Langevin equations, both of which are of the form of a stochastic integro differential equation and as such can be viewed as non-Markovian stochastic differential equation (SDE) models.




Let $\qDomain \in \{ \mathbb{R}^{n}, \mathbb{T}^{n}\}$, where $ \mathbb{T}^{n}= \left( \mathbb{R} / \mathbb{Z}\right)^{n} $ denotes the $n$-dimensional standard torus.\footnote{The assumption that configurations are restricted to the torus eliminates several technical complications and is motivated by the frequent applications of GLEs in molecular modelling, where such a formulation is commonly used.} We first consider a generalized Langevin equation of the form
\begin{equation}\label{eq:GLE:nonmark}
\begin{aligned}
\dot{\q} &=  \M^{-1}\p, \\
\dot {\p} &=  {\bm F}(\q) -  \int_{0}^{t}\K(t-s)\M^{-1}\p(s) \dd s  + \etabf(t).
\end{aligned}
\end{equation}
where the dynamic variables $\q \in \qDomain, \p \in \RR^{n}$ denote the configuration variables  and conjugate momenta of a  Hamiltonian system with energy function 
\begin{equation}\label{eq:hamiltonian:gen}
H(\q,\p) = U(\q) + \frac{1}{2} \p^{\trans}\M^{-1}\p,
\end{equation}
where the mass tensor $\M \in \mathbb{R}^{n\times n}$ is required to be symmetric positive definite and $U \in \mathcal{C}^{\infty}(\qDomain, \RR)$ is a smooth potential function so that ${\bm F} = -\nabla U$ constitutes a conservative force.
$\K : [0,\infty) \rightarrow \mathbb{R}^{n\times n }$ is a matrix-valued  function of $t$, which is referred to as the memory kernel, and $\etabf $ is a stationary Gaussian process taking values in $\mathbb{R}^{n}$ and which (in equilibrium) is assumed to be statistically independent of $\q$ and $\p$. We refer to $\etabf$ as the noise process or random force. We further assume that a fluctuation-dissipation relation between the random force $\etabf$ and the memory kernel holds so that
\begin{enumerate}[label=(\roman*)]
\item \label{item:eta:1}
the random force $\etabf$ is unbiased, i.e.,
\[
\EE[\etabf(t)] = \0,
\]
for all $t \in [0,\infty)$.
\item \label{item:eta:2}
the auto-covariance function of the random force and the memory kernel $\K$ coincide up to a constant prefactor, i.e., 
\[
\EE[\etabf(s+t) \etabf^{\top}(s)] = \beta^{-1} \K(t), ~~\beta >0,
\]
where the constant $\beta>0$ corresponds to the inverse temperature of the system under consideration.
\end{enumerate}
\subsubsection{Position dependent memory kernels and non-conservative forces.}
To broaden the range of applications for our model, 
we also consider instances of the generalized Langevin equation where:
\begin{enumerate}[label=(\roman*)]
\item 
the force ${\bm F}$ is allowed to be non-conservative, i.e., it does not necessarily correspond to the gradient of a potential function,
\item
the random force is a non-stationary process.
\end{enumerate}
More specifically, we consider the case where the strength of the random force depends on the value of the configurational variable $\q$, i.e.,
\begin{equation}\label{eq:gle:q:nonmark:1} 
\begin{aligned}
\dot{\q}(t) &= \M^{-1}\p(t), \\
\dot{\p}(t) &= {\bm F}(\q(t)) -  \Kq(\q,t) * \p + \etaq(t).
\end{aligned}
\end{equation}
where ${\bm F} \in \mathcal{C}^{\infty}(\qDomain,\RR^{n})$ is a smooth vector field, and the random force $\etaq$ is assumed to be of the form 
\[
\etaq(t) = g^{\trans}(\q(t)) \etabf(t),
\]
with $\etabf$ again satisfying \ref{item:eta:1} and \ref{item:eta:2} and the convolution term, $\Kq(\q,t) * \p$,
 is of the form 
\[
\Kq(\q,t) * \p =  g^{\trans}(\q(t))\int_{0}^{t}  \Kbf(t-s)g(\q(s)) \p(s) \dd s,
\]
with $g \in \mathcal{C}^{\infty}(\RR^{n},\RR^{n\times n})$ and $\Kbf$ as specified above. We motivate the above described type of non-stationary random force and position dependent dissipation term at the end of the following section. \\

The generic form of the above described GLEs can be derived using a Mori-Zwanzig reduction of  the combined Hamiltonian dynamics of an explicit heat bath representation and the system of interest \cite{Zwanzig1961b,Zwanzig1973,Mori1965}. In what follows, we  briefly outline the Mori-Zwanzig formalism in a simplified setup  following the presentation in \cite{Givon2004}. We will then consider the particular case of the Kac-Zwanzig model and demonstrate how the above instances of the GLE can be derived from this model. 


\subsection{Formal derivation of the generalized Langevin equation via Mori-Zwanzig projection}
Consider an ordinary differential equation of the form 
\begin{equation}\label{eq:gen:ODE}
\begin{aligned}
\dot{\bu} =& f(\bu,\bv),\\
\dot{\bv} =& g(\bu,\bv),
\end{aligned}
\end{equation}
subject to the initial condition 
\begin{equation}
(\bu(0),\bv(0)) = (\bu_{0},\bv_{0}),
\end{equation}
where $f,g$ are smooth functions, i.e., $f \in \mathcal{C}^{\infty}(\RR^{n_{\bu} \times n_{\bv}}, \RR^{n_{\bu}}),g \in \mathcal{C}^{\infty}(\RR^{n_{\bu} \times n_{\bv}}, \RR^{n_{\bv}})$, with $n_{\bv},n_{\bu}$ being  positive integers. Also, assume that there is a probability measure $\mu(\dd \bu, \dd \bv) = \rho(\bu,\bv)\dd \bu \dd \bv$ with smooth density $\rho \in \mathcal{C}^{\infty}(\RR^{n_{\bu} \times n_{\bv}}, [0,\infty))$, which can be associated with a stationary state\footnote{in the sense that $\Lc \rho = 0$, with $\Lc$ being the Liouville operator associated with \cref{eq:gen:ODE}.} of the system \cref{eq:gen:ODE}.
Consider now the projection operator $\mathscr{P}$, which maps observables  $w(\,\cdot\,,\,\cdot\,)$ onto the conditional expectation $\mathscr{P} \bu \mapsto \EE_{\mu} [ w(\bu,\bv) \given \bv]$, i.e., 
\[
\left(\mathscr{P}w\right)(\bu) = 
\frac{
\int_{\RR^{n_{\bv}}}\rho(\bu,\bv) w(\bu,\bv)\dd \bu \dd \bv
}{
\int_{\RR^{n_{\bv}}}\rho(\bu,\bv) \dd \bu \dd \bv
}.
\]
The Mori-Zwanzig projection formalism allows 
to recast the system \cref{eq:gen:ODE} as an integro-differential equation (IDE) of the generic form 
\begin{equation}\label{eq:integro:generic}
\dot{\bu}(t) = \bar{f}(\bu(t)) + \int_{0}^{t}K(\bu(t-s),s) \dd s + \eta(\bu(0),\bv(0),t),
\end{equation}
where $\bar{f} = \mathscr{P}f$, 
$K : \RR^{n_{\bu}} \times [0,\infty) \rightarrow  \RR^{n_{\bu}}$ is a memory kernel,
and $\eta$ is a function of the initial values of $\bu,\bv$ and the time variable $t$. It is important to note that while $\eta$ depends on the initial condition of both $\bu$ and $\bv$ in \cref{eq:gen:ODE}, the remaining terms in the IDE \cref{eq:integro:generic} only depend explicitly on the dynamic variable $\bu$. Similarly as in the stochastic IDEs \cref{eq:GLE:nonmark} and \cref{eq:gle:q:nonmark:1}  the convolution term in \cref{eq:integro:generic} can,  under appropriate conditions on $f,g$, be considered as a dissipation term. Likewise, under the assumptions that $\bu,\bv$ are initialized randomly according to $\mu$, the term $\eta(\bu(0),\bv(0),t)$ in \cref{eq:integro:generic} can be interpreted as a random force.\\

A particularly well studied case is the situation where the functions $f$ and $g$ are such that $(f^{\trans},g^{\trans})^{\trans}$ is a Hamiltonian vector field and \cref{eq:gen:ODE} corresponds to the equation of motion of a Hamiltonian system. In this case a natural choice for $\mu$ is the Gibbs-Boltzmann distribution associated  with the Hamiltonian. This choice of $\mu$ allows us to interpret  the degrees of freedom represented by the dynamical variable $\bv$ as a heat bath or energy reservoir. For example, let $\bu = (\q,\p)\in \RR^{2n}$, $\bv = (\qh,\ph)\in \RR^{2m}$ with $ 2n=n_{\bu},2m = n_{\bv}$. We may consider the case where $f$ and $g$ are derived from the Hamiltonian 
\begin{equation}\label{eq:ham:heatbath}
H(\q,\p,\qh,\ph) = V(\q) +\frac{1}{2} \p^{\trans}\M^{-1}\p +V_{c}(\q,\qh) + V_{h}(\qh) + \frac{1}{2} \ph^{\trans}\widetilde{\M}^{-1}\ph,
\end{equation}
where $V,V_{c},V_{h}$ are smooth potential functions such that $V+V_{c}+V_{h}$ is confining and $\M \in\RR^{n\times n}, \widetilde{\M}\in\RR^{m\times m}$ are symmetric positive definite matrices. In view of \cref{eq:integro:generic} the variables $(\q,\p)$ correspond to the explicitly resolved part of the system; the variables $(\qh,\ph)$ correspond to the part of the system which is ``projected out'' and is replaced by the dissipation term and the fluctuation term, thus it functions as the heat bath in the reduced model.
The coupling between heat bath and explicitly resolved degrees of freedom is encoded in the form of the coupling potential $V_{c}$, and the statistical properties of the heat bath are determined both by the form of the mass matrix $\widetilde{\M}$ and the form of the potential $V_{h}$.\\

Let $P$ denote the projection $(\bu,\bv) \mapsto \bu$. The first step in the derivation of the  IDE \cref{eq:integro:generic}  is to rewrite the first line in \cref{eq:gen:ODE} as  
\begin{equation}\label{eq:gen:ODE:deriv}
\dot{\bu}(t) = \left(\mathscr{P}f\right)(P(\bu(t),\bv(t))) + \left [ f(\bu(t),\bv(t)) -  \left(\mathscr{P}f\right) \left ( P(\bu(t),\bv(t)) \right ) \right ].
\end{equation}
Obviously, the first term in \cref{eq:gen:ODE:deriv} corresponds exactly to $\bar{f}(\bu(t))$ in \cref{eq:integro:generic}. 
Let 
\[
\Lc =  f(\bu,\bv) \cdot \nabla_{\bu} + g(\bu,\bv)\cdot \nabla_{\bv} 
\]
denote the Liouville operator associated with \cref{eq:gen:ODE}. Noting that 
\[
\Lc \left ( P (\bu,\bv) \right ) =  f(\bu,\bv),
\]
 the term in the square brackets in \cref{eq:gen:ODE:deriv} can be rewritten  in semi-group notation as
\begin{equation}\label{eq:gen:ODE:semigroup}
\begin{aligned}
 f(\bu(t),\bv(t)) -  \left(\mathscr{P}f\right) (\bu(t),\bv(t))
 &= e^{t\Lc}(\I - \mathscr{P})f(\bu(0),\bv(0))\\
& = e^{t\Lc}(\I-\mathscr{P}) \Lc P (\bu(0),\bv(0)),
 \end{aligned}
\end{equation}
where $e^{t\Lc}$ denotes the flow-map operator associated with the solution of \cref{eq:gen:ODE}, which is defined so that $e^{t\Lc}w(\bu(0),\bv(0)) = w(\bu(t),\bv(t))$. The integro-differential form \cref{eq:integro:generic} then follows by applying the operator identity
\[
e^{t\Lc} =  \int_{0}^{t} e^{(t-s)\Lc} \mathscr{P} \Lc e^{s(\I-\mathscr{P})\Lc} \dd s +e^{t(\I-\mathscr{P}) \Lc},
\]
which is known as Dyson's formula \cite{morriss2013statistical}, to the last line in \cref{eq:gen:ODE:semigroup} yielding
\begin{equation}\label{eq:stoch:integro:generic:formal}
\begin{aligned}
e^{t\Lc}(\I-\mathscr{P}) \Lc P (\bu(0),\bv(0)) &=  \int_{0}^{t} e^{(t-s)\Lc} \mathscr{P} \Lc e^{s(\I-\mathscr{P})\Lc}(\I-\mathscr{P}) \Lc P (\bu(0),\bv(0)) \dd s\\
&+e^{t(\I-\mathscr{P}) \Lc} (\I-\mathscr{P}) \Lc P (\bu(0),\bv(0)),
 \end{aligned}
\end{equation}
where the second term on the right hand side can be identified with $\eta$ in \cref{eq:integro:generic}, and the first term in \cref{eq:stoch:integro:generic:formal} corresponds to the integral term in \cref{eq:integro:generic}.
The form of the last term in 
\cref{eq:stoch:integro:generic:formal} suggests that $\eta$ can be formally written as the solution of a differential equation
\begin{equation}\label{eq:orthogonal}
\begin{aligned}
\frac{\partial}{\partial t} \eta(\bu(0),\bv(0),t) &= (\I - \mathscr{P})\Lc \eta(\bu(0),\bv(0),t),\\
\eta(\bu(0),\bv(0),0) &= f(\bu(0),\bv(0)) - (\mathscr{P}f) (\bu(0)),
\end{aligned}
\end{equation}
which is commonly referred to as the {\em orthogonal dynamics equation} \cite{darve2009computing,Givon2004}.\\

A couple of remarks are in order. First, we reiterate that the above calculations are purely formal, i.e., the above expressions for the memory kernel $K$ and the fluctuation term $\eta$ in general do not possess a closed form solution and are therefore often considered as intractable. Moreover, the well-posedness of the orthogonal dynamics equation \cref{eq:orthogonal} is not obvious and care needs to be taken regarding the existence of solutions and the interpretation of the differential operator $\Lc$ therein. We refer here to \cite{givon2005existence} for a rigorous treatment of this equation.
We also mention that the above choice of the projection operator $\mathscr{P}$ as a linear operator which maps functions of $(\bu,\bv)$  into the space of functions of $\bu$  constitutes a special case of the Mori-Zwanzig formalism. More general forms of the projection operator $\mathscr{P}$ can be considered within the Mori-Zwanzig formalism. For example, the Mori-Zwanzig formalism can be used to  derive an IDE for the dynamics of reaction coordinates (collective variables). The corresponding projection operator $\mathscr{P}$ is typically nonlinear in these cases, which can drastically complicate the derivation and the form of the IDE.  For a more general presentation of the Mori-Zwanzig projection formalism we refer to the above mentioned papers \cite{darve2009computing,Givon2004} and the references therein as well as the original papers by Mori \cite{Mori1965} and Zwanzig \cite{Zwanzig1961b,Zwanzig1973}. In particular the latter paper by Zwanzig considers nonlinear forms of the projection operator $\mathscr{P}$.\\

Secondly, we point out that in order to derive  the stochastic IDEs  \cref{eq:GLE:nonmark} and \cref{eq:gle:q:nonmark:1} an additional step is required. While \cref{eq:GLE:nonmark} and \cref{eq:gle:q:nonmark:1}  are of the form of a stochastic IDE, i.e., they are IDEs driven by a (non-Markovian) stochastic process, the equation \cref{eq:integro:generic}  constitutes an IDE with random initial data, i.e., the system follows a deterministic trajectory after initialization. In the physics literature it is common, in the situation where $f,g$ define a Hamiltonian vector field, to establish equivalence of these systems by virtue of an averaging argument which is considered valid when the system is in equilibrium and
$n_{\bv}$  is sufficiently large (see e.g. \cite{Kantorovich2008}).  

Drawing a mathematically rigorous connection between \cref{eq:integro:generic} and a  stochastic IDE which resembles the form of \cref{eq:GLE:nonmark} or \cref{eq:gle:q:nonmark:1} requires substantial work. As we discuss in the section below, weak convergence as $n_{\bv} \rightarrow \infty$ of the trajectory of $\bu$ on finite time intervals to the solution of a stochastic integro-differential has been shown in \cite{Kupferman2002,Kupferman2004} for instances of the Ford-Kac model. 
\subsubsection{The Ford-Kac model.}
We consider the Mori-Zwanzig projection formalism in the situation where the ODE \cref{eq:gen:ODE} corresponds to the equation of motion derived from the Hamiltonian \cref{eq:ham:heatbath}. We already mentioned above that the memory kernel $K$ and the fluctuation term in the IDE \cref{eq:integro:generic}  in general do not possess a closed form solution. A notable exception, however, is the situation of a linearly coupled harmonic heat bath, e.g.,
\begin{equation}\label{eq:FK:coupling}
V_{c}(\q,\qh) = \q^{\trans}\coupMat\qh,
\end{equation}
 with $\coupMat\in \RR^{n\times m}$, and  
\begin{equation}\label{eq:FK:heat}
V_{h}(\qh) = \frac{1}{2} \qh^{\trans}\heatMat\qh,
\end{equation}
with $\heatMat\in \RR^{m\times m}$ being a symmetric positive (semi-)definite matrix. Under this choice of the potential functions $V_{c}$ and $V_{h}$, the equations of motion associated with \cref{eq:ham:heatbath} are of the form
\begin{equation}\label{eq:kac:motion}
\begin{aligned}
\dot{\q} &= \M^{-1} \p,\\
\dot{\p} &= -\nabla_{\q} V(\q)  + \coupMat \qh, \\
\dot{\qh} &= \widetilde{\M}^{-1}\ph,\\
\dot{\ph} &= -\heatMat\qh  + \coupMat^{\trans}\q.
\end{aligned}
\end{equation}
The system \cref{eq:kac:motion} was first studied in \cite{ford1965statistical} and is commonly referred to as {\em Ford-Kac model}.
Integrating the 3rd and 4th line of \cref{eq:kac:motion} we obtain
\begin{equation}\label{eq:FK:integral}
\begin{pmatrix} \qh(t) \\ \ph(t) \end{pmatrix} =
e^{t{\bm R}}\begin{pmatrix} \qh(0) \\ \ph(0) \end{pmatrix}
 + \int_{0}^{t} e^{{(t-s) {\bm R} }} \begin{pmatrix} \0 \\ \coupMat^{\trans}\q(s) \end{pmatrix} \dd s,
\end{equation}
where by ${\bm R} \in \RR^{2m\times 2m}$ we denote the matrix
\[
{\bm R} = \begin{pmatrix} \0 & \widetilde{\M}^{-1} \\
 -\heatMat & 0 \end{pmatrix}.
\]
Partial integration of the integral term in \cref{eq:FK:integral} yields
\[
\begin{aligned}
\begin{pmatrix} \qh(t) \\ \ph(t) \end{pmatrix}  &=
e^{t{\bm R}}\begin{pmatrix} \qh(0) \\ \ph(0) \end{pmatrix} + {\bm R}^{-1}  \begin{pmatrix} \0 \\ \coupMat^{\trans}\q(t) \end{pmatrix} 
- {\bm R}^{-1}e^{{t {\bm R} }} \begin{pmatrix} \0 \\ \coupMat^{\trans}\q(0) \end{pmatrix} + \int_{0}^{t} e^{{(t-s) {\bm R} }} \begin{pmatrix} \0 \\ \coupMat^{\trans}\p(s)  \end{pmatrix} \dd s.
\end{aligned}
\]
Substituting $\tilde{\q}$ in the 2nd line by this expression we obtain an IDE of the form \cref{eq:integro:generic} with the deterministic vector field $\bar{f}$ being of the form
\[
\bar{f}(\q,\p) = \begin{pmatrix} \M^{-1}\p \\ - \nabla_{\q}V(\q) -  \coupMat \heatMat \coupMat^{\trans}\q  \end{pmatrix},
\]
the memory kernel $K$ being of the form 
\begin{equation}\label{KF:kernel}
K(\p(t-s),s) =  
- \begin{pmatrix} \0 & \0 \\ \0 & \coupMat^{-1}  \end{pmatrix}
 e^{{(t-s) {\bm R} }} \begin{pmatrix} \0 \\ \coupMat^{\trans}\p(s)  \end{pmatrix},
\end{equation}
and the fluctuation term being of the form
\begin{equation}\label{KF:noise}
\eta(\qh(0),\ph(0),\q(0),t) = e^{t{\bm R}}\begin{pmatrix} \qh(0) \\ \ph(0) \end{pmatrix} - {\bm R}^{-1}e^{{t {\bm R} }} \begin{pmatrix} \0 \\ \coupMat^{\trans}\q(0) \end{pmatrix}.
\end{equation}

\subsubsection{The thermodynamic limit of the Ford-Kac model.}
A detailed analysis of the thermodynamic limit $m\rightarrow \infty$ of an instance of the Ford-Kac model can be found in \cite{Kupferman2002}; see also \cite{Kupferman2004,Givon2004}. The Hamiltonian of the system considered in \cite{Kupferman2002} comprises a single distinguished particle of unit mass, which is subject to an external force associated with the confining potential function $U \in\mathcal{C}^{\infty}(\RR,\RR)$. The heat bath is modeled by $m$ particles. Each of the heat bath particles is attached by a linear spring to the distinguished particle. The heat bath particles are not subject to any additional force apart from the coupling force. The corresponding Hamiltonian can be written\footnote{One easily verifies that this Hamiltonian corresponds to a parametrization of \cref{eq:ham:heatbath} as  $\M=1,~\widetilde{\M}= {\rm diag}(\tilde{m}_{1},\dots,\tilde{m}_{m}),~V(\q) = U(\q) + \frac{1}{2}\sum_{i=1}^{m} k_{i}\q^{2},~V_{c}(\q,\qh) = \sum_{i=1}^{m} k_{i}\q\qh_{i},~ V_{h}(\qh) = \frac{1}{2}\sum_{i=1}^{m}k_{i}\qh_{i}^{2}$.}
\begin{equation}\label{eq:Ham:FK}
H(\q,\p,\qh,\ph)
= \frac{1}{2}\p^{2} + U(\q) + \frac{1}{2} \sum_{j=1}^{m}\frac{\ph_{j}^{2}}{\tilde{m}_{j}}+\frac{1}{2}\sum_{j=1}^{m}k_{j}(\qh_{j}-\q),  
\end{equation}
where $k_{j}>0$ corresponds to the stiffness constant of the spring attached to the $j$-th heat bath particle and $\tilde{m}_{j}>0$ is the mass of the $j$-th heat bath particle.
For this system one finds that the terms \cref{KF:kernel} and \cref{KF:noise} take a particular simple form, so that the corresponding IDE can be written as
\begin{equation}\label{eq:integro:diff:harmonic}
\begin{aligned}
\dot{\q} &= \p,\\
\dot{\p} &= -\partial_{\q}U(\q)   - \int_{0}^{t} K^{(m)}(t-s)\p(s) \dd s + \eta^{(m)}(\qh_{i}, \ph_{i},t),
\end{aligned}
\end{equation}
where the memory kernel is of the form 
\[
K^{(m)}(t) = \sum_{i=1}^{m}k_{i} \cos(\omega_{i}t),
\]
and the fluctuation term is of the form
\[
\eta^{(m)}(\qh_{i}, \ph_{i}, t) = \sum_{i=1}^{m}\sqrt{\frac{k_{i}}{\beta}} \Big( \qh_{i}(0) \cos(\omega_{i}t) +  \ph_{i}(0)\sin(\omega_{i}t) \Big ),
\]
with $\omega_{j} =\sqrt{k_{j}/\tilde{m}_{j}}$. 
If the initial conditions of the heat bath particles are assumed to be distributed according to the Gibbs measure associated with \cref{eq:Ham:FK} and the statistical distribution of the values of $k_{j}$ and $\tilde{m}_{j}$ are controlled in a certain way as $m\rightarrow \infty$, it can been shown that for any finite $T>0$ the trajectories  of the solution of \cref{eq:integro:diff:harmonic} converge weakly within the interval $[0,T]$ to  solutions of a stochastic IDE of the form \cref{eq:GLE:nonmark}; for a precise statement see \cite[Theorem 4.1]{Kupferman2002}.
\subsubsection{The Kac-Zwanzig model}
The Kac-Zwanzig model (see \cite{Zwanzig1973}) is a generalization of the Ford-Kac model, the heat bath is still harmonic, i.e., $V_{h}$ has the general form \cref{eq:FK:heat}, but the coupling potential is such that the coupling force is linear in $\qh$ but non-linear in $\q$, i.e.,
\[
V_{c}(\q,\qh) = {\bm G}(\q) \qh,
\]
where ${\bm G}\in \mathcal{C}^{2}(\RR^{n},\RR^{n\times m})$. For such a system a closed form solution of the terms in the Mori-Zwanzig projection \cref{eq:integro:generic} can still be derived (see \cite{Zwanzig1973} or \cite{hanggi1997generalized} for a detailed derivation). However, unlike in the situation of the Ford-Kac model 
the closed form solution of the memory kernel $K$ and the fluctuation term $\eta$ are functions of $\q$. This observation motivates the study of GLEs of the form \cref{eq:gle:q:nonmark:1}. Instances of \cref{eq:gle:q:nonmark:1} which are derived from such a Kac-Zwanzig heat bath model can be found for example in \cite{Kantorovich2008,stella2014generalized,Ness2015,Ness2016}.\\

We note that an elegant alternative derivation of the GLE can be obtained beginning from a model of an infinite-dimensional heat-bath. Such models have been extensively studied in \cite{jakivsc1997ergodic,jakvsic1997spectral,jakvsic1998ergodic}, and in a (non-equilibrium) context by Rey-Bellet and coworkers in \cite{Eckmann1998,Eckmann1999a,rey2003statistical,Rey-Bellet2006b}.

\subsection{Main results and organization of the paper}
In this article we focus on instances of the GLEs \cref{eq:GLE:nonmark} and \cref{eq:gle:q:nonmark:1} (or, more precisely, \cref{eq:gle:q:nonmark}), which can be represented in an extended phase space as an It\^o diffusion process. We refer to such GLEs, which possess a Markovian representation in an extended phase space as quasi-Markovian generalized Langevin equations (QGLEs). We specify the extended variable formalism, i.e., the particular form of the It\^o diffusion processes which we consider for a Markovian representation of GLEs, in the following \cref{sec:extend:var:1}. In that section we also review results from the literature on the Markovian representation and approximation of generalized Langevin equations. 
The main results of this article are contained in \cref{thm:ergo:bounded:1,thm:ergo:unbounded:2,thm:ergo:unbounded:3,thm:ergo:GLEq} which we present in \cref{sec:ergodic:GLE}. In these theorems we provide criteria which ensure geometric ergodicity for the Markovian representation of GLEs of the form \cref{eq:GLE:nonmark} and \cref{eq:gle:q:nonmark:1}. Since the extended variable formalism which we consider in this article is in various ways more general than the extended variable formalisms considered for ergodicity proofs in previous works in the literature our results cover a wide class of GLEs, which have previously not been shown to be (geometrically) ergodic and which are of high interest in applications (for a detailed discussion see the notes at the end of \cref{sec:notes}). In particular, showing (geometric) ergodicity for QGLEs with non-conservative forces and/or stated dependent memory kernels is a novel contribution of this paper. As a consequence of the geometric ergodicity we can derive in a generic way the validity of a central limit theorem (see \cref{cor:CLT}) for the solution processes of the respective GLEs. For the proofs of the \cref{thm:ergo:bounded:1,thm:ergo:unbounded:2,thm:ergo:unbounded:3,thm:ergo:GLEq} suitable Lyapunov functions must be constructed and the validity of a minorization condition ensured; see \cref{sec:lem:stat} and \cref{sec:lem:nonstat} for details, and \cref{sec:stochastic:analysis} for a general overview of the employed framework. For the proof on the existence of suitable Lyapunov functions we use a similar ansatz as in previous works (compare in particular with \cite{Mattingly2002,Ottobre2011}), but we require additional linear algebra arguments due to increased generality of our extended variable formalism. The proof of the validity of the minorization condition in the case of position dependent coefficients requires a non-standard alternation of the common techniques. We show the existence of a minorizing measure by virtue of a Girsanov transformation. 




\section{Markovian representation of generalized Langevin equations with configuration dependent noise}\label{sec:extend:var:1}
 
In this section we derive a Markovian representation of the GLEs introduced in \cref{sec:introduction}. We start with an It\^o diffusion process of the form
\begin{equation}\label{eq:gle:q}
\begin{aligned}
\dot{\q} &= \M^{-1}\p,\\
\dot{\p} & = {\bm F}(\q) -\Gammaq_{1,1}(\q)\M^{-1}\p - \Gammaq_{1,2}(\q) \s +\beta^{-1/2}\Sigmaq_{1}(\q) \dot{\W}, \\
\dot{\s} &= - \Gammaq_{2,1}(\q) \M^{-1}\p - \Gammaq_{2,2}(\q) \s + \beta^{-1/2}\Sigmaq_{2}(\q) \dot{\W},\\
\text{with}~ &\big (\q(0),\p(0),\s(0) \big ) \sim \mu_{0},
\end{aligned}
\end{equation}
where $\M, {\bm F}, \beta$ are as previously defined. In particular ${\bm F}$ may correspond to the negative gradient of a smooth and confining potential function $U\in \mathcal{C}^{\infty}(\qDomain,\RR)$, i.e., ${\bm F} = -\nabla U$. Furthermore,
\begin{enumerate}[label=(\roman*)]
\item the auxiliary variable $\s(t)$ takes values in $\RR^{m}$ with $m\geq n$,\\
\item $\dot{\W} = [\dot{W}_{i} ]_{1\leq i \leq n+m}$ is a vector of $(n+m)$ independent Gaussian white-noise components, i.e.,
$\dot{W}_{i} \sim \mathcal{N}(0,1)$ and $\EE [ \dot{W}_{i}(t) \dot{W}_{j}(s) ] = \delta_{ij} \delta(t-s)$.\\
\item $\Gammaq_{i,j}, \Sigmaq_{i}, \,i=1,2$ are matrix valued functions so that for $m\geq n$,
\[
\Gammaq = 
\begin{pmatrix}
\Gammaq_{1,1} &\Gammaq_{1,2}\\
\Gammaq_{2,1} & \Gammaq_{2,2}
\end{pmatrix} \in \mathcal{C}^{\infty}\(\qDomain,\RR^{(n+m)\times (n+m)}\).\]
and
\[
\Sigmaq = \begin{pmatrix} \Sigmaq_{1,1} &\Sigmaq_{1,2} \\ \Sigmaq_{2,1} & \Sigmaq_{2,2} \end{pmatrix} = \begin{pmatrix} \Sigmaq_{1} \\ \Sigmaq_{2}\end{pmatrix} \in \mathcal{C}^{\infty}\(\qDomain,\RR^{(n+m)\times (n+m)}\),
\]
 i.e., 
\[
\Gammaq_{1,1}\in \mathcal{C}^{\infty}(\qDomain, \RR^{n\times n}),~\Gammaq_{2,1}^{\trans},\Gammaq_{1,2}\in (\qDomain, \RR^{n\times m}),~ \Gammaq_{2,2}\in\mathcal{C}^{\infty}(\qDomain, \RR^{m\times m}),
\]
and
 \[
 \Sigmaq_{1} \in  \mathcal{C}^{\infty}(\qDomain, \RR^{n\times (n + m)}),~\Sigmaq_{2}  \in \mathcal{C}^{\infty}(\qDomain, \RR^{m\times (n+m)}).
\]
\item \label{it:gle:q:4}
The probability measure $\mu_{0}$ is such that $(\q(0),\p(0),\s(0))$ has finite first and second moments. In particular,
\[
\int_{\qDomain \times \RR^{n+m}} \norm {\q}^{2}_{2} +  \norm {\p}^{2}_{2}  + \norm {\s}^{2}_{2}  ~\mu_{0} (\dd \q, \dd \p, \dd \s) < \infty.
\]
\end{enumerate}
\subsubsection{Notation.}
In the sequel, we write $\x^{\trans} := (\q^{\trans},\p^{\trans},\s^{\trans})$, as well as $\z^{\trans} := (\p^{\trans},\s^{\trans})$ as shorthand notation for the phase space and auxiliary variables, and we use $\xDomain := \qDomain \times \pDomain \times \sDomain$, and  $\zDomain := \pDomain \times \sDomain$, where $\pDomain=\RR^{n}, \sDomain =\RR^{m}$, as shorthand notation for the corresponding domains. With some abuse of notation we also denote points in $\xDomain,\zDomain, \qDomain,\pDomain,\sDomain$ by $\x,\z,\q,\p,\s$, respectively. 

\subsubsection{Associated generator.} \label{subsec:generator}
We denote the generator of \cref{eq:gle:q} by 
\begin{equation}\label{def:gle:generator}
\Lcgq = \Lc_{H} + \Lc_{O},
\end{equation}
where $\Lc_{H}$ and $ \Lc_{O}$, which when considered as operators on $\mathcal{C}^{\infty}(\xDomain,\RR)$, have the form
\[
\Lc_{H} := {\bm F}(\q)\cdot \nabla_{\p} + \M^{-1}\p \cdot \nabla_{\q},
\]
and
\[
\Lc_{O} := - \Gammaq(\q)\begin{pmatrix} \M^{-1}\p \\ \s\end{pmatrix} \cdot \nabla_{\z} + \frac{\beta^{-1}}{2} \Sigmaq(\q) \Sigmaq^{\trans}(\q)  : \nabla^{2}_{\z},
\]
where 
\[
 \Sigmaq(\q) \Sigmaq^{\trans}(\q)  : \nabla^{2}_{\z} = \sum_{i=1}^{M} \sum_{j=1}^{M}\left [  \Sigmaq(\q) \Sigmaq^{\trans}(\q)  \right ]_{i,j} \partial_{\z_{i}}\partial_{\z_{j}}, ~M=n+m.
\]
\subsubsection{Derivation of the associated stochastic IDE.}
In what follows we relate the system \eqref{eq:gle:q}  to a non-Markovian stochastic IDE. 
Consider the following convolution functional 
\begin{gather}\begin{aligned}
\Kq_{\Gammaq}(\q,t) * \p &= 
\Gammaq_{1,1}(\q(t))\M^{-1} \p(t)\\ &~~- \Gammaq_{1,2}(\q(t)) \int_{0}^{t} \exp \left ({-\int_{s}^{t}\Gammaq_{2,2}(\q(r)) \dd r }\right ) \Gammaq_{2,1}(\q(s))\M^{-1}\p(s) \dd s,
\end{aligned}\raisetag{2.5\baselineskip}
\end{gather}
and a random force of the form
\[
\etaq(t) =  \etaq_{w}(t) + \etaq_{c}(t),
\]
where 
\begin{equation}\label{eq:def:wn}
\etaq_{w}(t) := \beta^{-1/2}\Sigmaq_{1}(\q(t))\dot{\W}(t),
\end{equation}
and
\begin{equation}\label{eq:def:cn}
\etaq_{c}(t) := - \Gammaq_{1,2}(\q(t)) \etabf_{c}(t),
\end{equation}
with $\etabf_{c}$ being the solution of the linear SDE
\begin{equation}\label{eq:gle:q:noise}
\dot{\etabf}_{c}(t) = - \Gammaq_{2,2}(\q(t)) \etabf_{c}(t) +  \beta^{-1/2}\Sigmaq_{2}(\q(t)) \dot{\W}(t),~~ \etabf_{c}(0) = \s(0).\\
\end{equation}
As shown in the following proposition, under this assumption, the SDE \eqref{eq:gle:q} can  be rewritten as a stochastic IDE of the form
\begin{equation}\label{eq:gle:q:nonmark} 
\begin{aligned}
\dot{\q}(t) &= \M^{-1}\p(t), \\
\dot{\p}(t) &= {\bm F}(\q(t)) -  \Kq_{\Gammaq}(\q,t) * \p + \etaq(t).
\end{aligned}
\end{equation}
\begin{proposition}\label{prop:GLE:mark}
If a (weak) solution of $(\q(t),\p(t),\s(t))$ of \cref{eq:gle:q} exists for all times $t\geq0$, the SDE \eqref{eq:gle:q} can be rewritten in the form \eqref{eq:gle:q:nonmark}.
\begin{proof}
The solution for $\s$ in \cref{eq:gle:q} can be written as
\begin{equation}\label{eq:s:solution}
\begin{aligned}
\s(t) = {\bm \Phi}(t,0,\q) \s(0) &- \int_{0}^{t} {\bm \Phi}(t,s,\q) \Gammaq_{2,1}(\q(s))\M^{-1}\p(s) d s\\
&+ \int_{0}^{t}{\bm \Phi}(t,s,\q)\Sigmaq_{2}(\q(s)) \dd \W(s), 
\end{aligned}
\end{equation}
with 
\begin{equation}
{\bm \Phi}(t,s,\q) = \exp\left ({-\int_{s}^{t}\Gammabf_{2,2}(\q(r)) d r } \right ).
\end{equation}
Substituting $\s(t)$ in the second equation of \cref{eq:gle:q} by the right hand side of \cref{eq:s:solution} we obtain
\begin{equation*}
\begin{aligned}
\dot{\p}(t) 
&= {\bm F}(\q(t)) -\Gammaq_{1,1}(\q(t)) \M^{-1}\p(t) \\
&~~+ \Gammaq_{1,2}(\q(t))\int_{0}^{t}{\bm \Phi}(t,s,\q)\Gammaq_{2,1}(\q(s))\M^{-1}\p(s)\dd s- \Gammaq_{1,2}(\q(t)){\bm \Phi}(t,0,\q)\s(0)\\
&~~ -\Gammaq_{1,2}(\q(t))\int_{0}^{t}{\bm \Phi}(t,s,\q)\Sigmaq_{2}(\q(s))\dd{\W}(s)+ \Sigmaq_{1}(\q(t))\dd{\W}(t).\\
\end{aligned}
\end{equation*}
As the solution of \eqref{eq:gle:q:noise}, $\etabf_{c}(t)$  can be written as
\[
\etabf_{c}(t) ={\bm \Phi}(t,0,\q) \s(0)-\Gammaq_{1,2}(\q(t))\int_{0}^{t}{\bm \Phi}(t,s,\q) \Sigmaq_{2}(\q(s))\dd \W(s), 
\]
and we find:
\[
\begin{aligned}
\dot{\p}(t) 
&=
{\bm F}(\q(t)) -  \Kq_{\Gammaq}(\q,t) * \p + \etaq_{w}(t)  -\Gammaq_{1,2}(\q(t)) \etabf_{c}(t)\\
&={\bm F}(\q(t)) -  \Kq_{\Gammaq}(\q,t) * \p + \etaq(t).
\end{aligned}
\]
\qed\end{proof}
\end{proposition}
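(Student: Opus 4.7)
The proof will proceed by direct computation: the $\s$-component of \cref{eq:gle:q} is, for each realization of $\q(\cdot)$, an inhomogeneous linear SDE in $\s$ with additive noise, so I can solve it pathwise by variation of parameters, substitute the resulting explicit expression for $\s(t)$ into the equation for $\dot{\p}$, and then regroup terms to recognize both the convolution functional $\Kq_{\Gammaq}(\q,t) * \p$ and the random force $\etaq(t) = \etaq_w(t) + \etaq_c(t)$.

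Concretely, I first introduce the fundamental matrix solution $\bm{\Phi}(t,s,\q)$ of the homogeneous equation $\partial_t \bm{\Phi} = -\Gammaq_{2,2}(\q(t))\bm{\Phi}$ with $\bm{\Phi}(s,s,\q) = \I$; the notation $\exp\bigl(-\!\int_s^t \Gammaq_{2,2}(\q(r))\,\dd r\bigr)$ should be read as shorthand for this (time-ordered) exponential. Duhamel's formula then yields the representation \cref{eq:s:solution} for $\s(t)$. Plugging this expression into the equation for $\dot{\p}$ in \cref{eq:gle:q}, the term $-\Gammaq_{1,1}(\q)\M^{-1}\p$ combines with the piece of $-\Gammaq_{1,2}(\q(t))\s(t)$ that carries the integral against $\Gammaq_{2,1}(\q(s))\M^{-1}\p(s)$ to produce exactly $-\Kq_{\Gammaq}(\q,t) * \p$ as defined. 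The remaining pure noise contribution $\beta^{-1/2}\Sigmaq_1(\q(t))\dot{\W}(t)$ is $\etaq_w(t)$ by \cref{eq:def:wn}, while the two contributions coming from the initial datum $\s(0)$ and from the stochastic integral against $\Sigmaq_2(\q(s))$ together add up to $-\Gammaq_{1,2}(\q(t))\etabf_c(t) = \etaq_c(t)$ once that bracketed expression is identified, by uniqueness of solutions, with the variation-of-constants solution of the auxiliary SDE \cref{eq:gle:q:noise}.

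The only real subtlety is the pathwise interpretation of $\bm{\Phi}(t,s,\q)$: because $\Gammaq_{2,2}(\q(r))$ need not commute for distinct $r$, this object is the fundamental matrix solution rather than a genuine matrix exponential, and some care is required when differentiating it in $t$ and when interchanging the deterministic and stochastic integrals during the substitution into $\dot{\p}$. Once this pathwise interpretation is fixed, variation of parameters applies directly since the coefficients are $\q$-adapted, the equation is linear in $\s$, and the diffusion is additive; everything else is algebraic regrouping plus the uniqueness argument identifying $\etaq_c$ with $-\Gammaq_{1,2}(\q)\etabf_c$.
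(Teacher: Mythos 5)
Your proposal is correct and follows essentially the same route as the paper: solve the $\s$-equation pathwise by variation of parameters to get the representation \cref{eq:s:solution}, substitute into the $\dot{\p}$-equation, and regroup to identify $\Kq_{\Gammaq}(\q,t)*\p$, $\etaq_{w}$, and $\etaq_{c}=-\Gammaq_{1,2}(\q)\etabf_{c}$. Your added caveat that $\exp\bigl(-\int_{s}^{t}\Gammaq_{2,2}(\q(r))\,\dd r\bigr)$ must be read as the (time-ordered) fundamental matrix solution when the matrices $\Gammaq_{2,2}(\q(r))$ do not commute is a point the paper's proof passes over silently, and is a worthwhile clarification rather than a deviation.
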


\begin{example}[Quasi-Markovian GLE with constant coefficients]\label{ex:gle:1}
If we consider the case where $\Gammaq$ and $\Sigmaq$ are constant, i.e., $\Gammaq \equiv \Gammabf$ and $\Sigmaq \equiv \Sigmabf$ with $\Gammabf, \Sigmabf  \in \mathbb{R}^{(n+m)\times(n+m)}$, one finds that the convolution term simplifies to
\[
 \Kq_{\Gammaq}(\q,t) * \p = -\Gammabf_{1,1} \M^{-1}\p(t) + \int_{0}^{t}\Gammabf_{1,2}e^{-\Gammabf_{2,2}(t-s)}\Gammabf_{2,1}\M^{-1}\p(s)\dd s,
\]
and the noise terms become
\begin{equation}
\begin{aligned}
\etaq_{w}(t)=\Sigmabf_{1}\dot{\W}(t), ~\etaq_{c}(t)=- \Gammabf_{1,2}e^{-\Gammabf_{2,2}t}\s(0)-\Gammabf_{1,2}\int_{0}^{t}e^{-\Gammabf_{2,2}(t-s)}\Sigmabf_{2} \dd \W(s),
\end{aligned}
\end{equation}
so that the stochastic IDE \cref{eq:gle:q:nonmark} resembles the form of the GLE \cref{eq:GLE:nonmark} with 
\begin{equation}\label{eq:def:K}
\K(t) =  \delta(t)\Gammabf_{1,1} + \Gammabf_{1,2}e^{-\Gammabf_{2,2}(t-s)}\Gammabf_{2,1}.
\end{equation}
\end{example}
\begin{example}[Quasi-Markovian GLE with position dependent noise strength]\label{ex:gle:2}
If we consider the case where $\Gammaq_{2,2}$ and $\Sigmaq_{2,2}$ are constant, i.e., $\Gammaq_{2,2} \equiv \Gammabf_{2,2}$ and $\Sigmaq_{2,2} \equiv \Sigmabf_{2,2}$ with $\Gammaq,\Sigmaq  \in \mathbb{R}^{m\times m}$, the convolution term simplifies to 
\begin{equation}
 \Kq_{\Gammaq}(\q,t) * \p =  \Gammaq_{1,2}(\q(t))\int_{0}^{t}e^{-\Gammabf_{2,2}(t-s)}\Gammaq_{2,1}(\q(s))\M^{-1}\p(s)\dd s,
\end{equation}
and the random force terms $\etaq_{w}$ and $\etaq_{c}$ become
\begin{equation}
\begin{aligned}
\etaq_{w}(t)= \Sigmaq_{1}(\q(t))\dot{\W}(t), 
\end{aligned}
\end{equation}
and 
\begin{equation}
\etaq_{c}(t)=- \Gammaq_{1,2}(\q(t))e^{-\Gammabf_{2,2}t}\s(0) -\Gammaq_{1,2}(\q(t))\int_{0}^{t}e^{-\Gammabf_{2,2}(t-s)}\Sigmaq_{2}(\q(s))\dd \W(s).
\end{equation}
so that for $m=n$ and $\Gammaq_{1,2} = -\Gammaq_{2,1}^{\trans}$, $\Sigmaq_{1,2} = \Sigmaq_{2,1}^{\trans} \equiv \0$, the stochastic IDE \cref{eq:gle:q:nonmark} resembles the form of the GLE \cref{eq:gle:q:nonmark:1} with $\K(t) = e^{-\Gammabf_{2,2}t}$.  
\end{example}
\begin{remark}[Existence of solutions of \cref{eq:gle:q}]
A sufficient condition for \cref{eq:gle:q} to possess a unique strong solution $\x(t)$ for all times $t\geq 0$, is that the right hand side of the SDE \cref{eq:gle:q} is Lipschitz in $\q,\p,\s$. Provided that the initial state $\mu_{0}$ is as specified in  \ref{it:gle:q:4}, it directly  follows by standard existence and uniqueness results for SDEs (see e.g. \cite[Theorem 5.2.1.]{Oksendal2003}) that 
for any $T>0$ there exists a unique strong solution $\x(t), t \in [0,T]$ of \eqref{eq:gle:q}, which is continuous in $t$  and 
\[
\EE \left [\int_{0}^{T}\norm{\x(t)}_{2}^{2}\dd t \right ] < \infty.
\]
Since  ${\bm F},\Gammaq,\Sigmaq$ are assumed to be smooth the Lipschitz condition is obviously satisfied for $\qDomain = \mathbb{T}^{n}$. Similarly, for an unbounded configurational domain, i.e., $\qDomain=\RR^{n}$, the Lipschitz condition for the right hand side of \cref{eq:gle:q} follows directly if the spectra of $\Gammaq(\q)$ and $\Sigmaq(\q)$ are uniformly bounded in $\q$ and ${\bm F}$ satisfies certain asymptotic growths conditions (e.g., \cref{as:potential:unbounded:2}). We also note that the existence of suitable Lyapunov functions as derived in, e.g.,  \cref{prop:lya:R} is sufficient (see e.g. \cite{Rey-Bellet2006a}) to ensure the existence of a weak solution $(\x(t))_{t\geq 0}$ under less strict asymptotic growth conditions on the force ${\bm F}$. 
\end{remark}
\subsection{Fluctuation-dissipation relation for quasi-Markovian generalized Langevin equations}
The following assumption can be understood as a fluctuation dissipation relation for the SDE \cref{eq:gle:q}:
\begin{assumption}\label{as:gle:q}
There exists a symmetric positive definite matrix $\Q \in\RR^{m\times m}$ such that for all $\q \in \qDomain$,
\begin{equation}\label{eq:Lyapunov:Gamma:q}
\Gammaq(\q) 
\begin{pmatrix}
\I_{n} & \0 \\
\0 & \Q
\end{pmatrix}
 + 
 \begin{pmatrix}
\I_{n} & \0 \\
\0 & \Q
\end{pmatrix}
\Gammaq^{\trans}(\q)  = \Sigmaq(\q)  \Sigmaq^{\trans}(\q).
\end{equation}
\end{assumption}
As shown in  \cref{prop:stat:noise}, below, for a quasi-Markovian GLE with constant coefficients (see \cref{ex:gle:1}),  \cref{as:gle:q} implies that the random force is stationary with covariance function $\K$ as specified in \cref{eq:def:K}.
\begin{proposition}\label{prop:stat:noise}
Let as in \cref{ex:gle:1} $\Gammaq$ and $\Sigmaq$ be constant, i.e., $\Gammaq \equiv \Gammabf$ and $\Sigmaq \equiv \Sigmabf$ with $\Gammabf, \Sigmabf  \in \mathbb{R}^{(n+m)\times(n+m)}$. 
If \cref{as:gle:q} is satisfied and $\mu_{0}$ such that $\s(0) \sim \mathcal{N}(\0, \Q)$, where $\Q \in \mathbb{R}^{m\times m}$ as specified in \cref{as:gle:q}, then $\etaq$ is a stationary Gaussian process with vanishing expectation and covariance function $\K$ as defined in \cref{eq:def:K}.
\begin{proof}
Let
\[
\G(r) =\Gammabf_{1,2} \int_{0}^{r}e^{-\Gammabf_{2,2}(r-s)}\Sigmabf_{2}  d\W(s).
\]
Without loss of generality we assume that $t\geq t'$, and we find that the covariance of $\etaq$ is indeed of the form \cref{eq:def:K}: 
\begin{equation*}
\begin{aligned}
\EE \left [\etaq(t)\etaq^{\trans}(t') \right] 
=& \;\EE \left[ \Sigmabf_{1}\dot{\W}(t) \dot{\W}(t')^{\trans} \Sigmabf_{1}^{\trans}] - \EE[ \G(t)  \dot{\W}^{\trans}(t')\Sigmabf_{1}^{\trans} \right] \\
&+ \EE \left [\Gammabf_{1,2}e^{-\Gammabf_{2,2} t} \s(0)\s(0)^{\trans} e ^{-\Gammabf_{2,2}^{\trans} t'} \Gammabf_{1,2}^{\trans}  \right ]\\
&+\EE \left [  \left (\Gammabf_{1,2} \int_{0}^{t'}e^{-\Gammabf_{2,2}(t-s)}\Sigmabf_{2}  d\W(s)\right ) \G^{\trans}(t')  \right ] \\
=& \; \delta(t-t')  (\Gammabf_{1,1} +  \Gammabf_{1,1}^{\trans})- \Gammabf_{1,2}e^{-\Gammabf_{2,2}(t-t')}(\Gammabf_{2,1} + \Q \Gammabf_{1,2}^{\trans})\\
&+ \Gammabf_{1,2}e^{-\Gammabf_{2,2} t} \Q e ^{-\Gammabf_{2,2}^{\trans} t'} \Gammabf_{1,2}^{\trans}  \\
&+ \Gammabf_{1,2}\int_{0}^{t'} e^{-\Gammabf_{2,2}(t-s)}(\Gammabf_{2,2}\Q + \Q \Gammabf_{2,2}^{\trans}) e^{-\Gammabf_{2,2}^{\trans}(t'-s)}\Gammabf_{1,2}^{\trans} d s \\
= & \; \delta(t-t')  (\Gammabf_{1,1} +  \Gammabf_{1,1}^{\trans})- \Gammabf_{1,2}e^{-\Gammabf_{2,2}(t-t')}\Gammabf_{2,1},
\end{aligned}
\end{equation*}
where expectations are taken over both $\mu_{0}$ and the path measure of the Wiener process $\W$. The last equality follows by partial integration of the integral term.
\qed\end{proof}
\end{proposition}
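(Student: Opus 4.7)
The plan is to verify Gaussianity, then the zero mean, then directly compute the autocovariance by expanding $\etaq = A + B + C$, where
\[
A(t) = \Sigmabf_{1}\dot{\W}(t), \quad
B(t) = -\Gammabf_{1,2} e^{-\Gammabf_{2,2} t}\s(0), \quad
C(t) = -\Gammabf_{1,2}\int_{0}^{t} e^{-\Gammabf_{2,2}(t-s)}\Sigmabf_{2}\,\dd\W(s).
\]
Gaussianity is immediate because $\etaq$ is a linear functional of the Gaussian vector $(\s(0), \W)$, and the mean vanishes since $\EE[\s(0)] = \0$ and the Itô integral and white noise have zero expectation.

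Next I would compute $\EE[\etaq(t)\etaq^{\trans}(t')]$ by expanding into nine cross products. By independence of $\s(0)$ and $\W$, only the diagonal blocks $AA^{\trans}$, $BB^{\trans}$, $CC^{\trans}$ and the white--Itô cross terms $AC^{\trans}$, $CA^{\trans}$ survive. Assuming without loss of generality $t \geq t'$, I would evaluate each:
\begin{itemize}
\item $\EE[A(t)A^{\trans}(t')] = \delta(t-t')\Sigmabf_{1}\Sigmabf_{1}^{\trans}$;
\item $\EE[B(t)B^{\trans}(t')] = \Gammabf_{1,2} e^{-\Gammabf_{2,2} t}\,\Q\,e^{-\Gammabf_{2,2}^{\trans} t'}\Gammabf_{1,2}^{\trans}$ using $\s(0)\sim\mathcal N(\0,\Q)$;
\item $\EE[C(t)A^{\trans}(t')] = \Gammabf_{1,2} e^{-\Gammabf_{2,2}(t-t')}\Sigmabf_{2}\Sigmabf_{1}^{\trans}$ (with a minus sign coming from $-\Gammabf_{1,2}$), using the formal identity $\EE[\dd\W(s)\dot{\W}^{\trans}(t')] = \delta(s-t')\,\I\,\dd s$;
\item $\EE[C(t)C^{\trans}(t')] = \Gammabf_{1,2}\!\int_{0}^{t'}\! e^{-\Gammabf_{2,2}(t-s)}\Sigmabf_{2}\Sigmabf_{2}^{\trans}e^{-\Gammabf_{2,2}^{\trans}(t'-s)}\dd s\,\Gammabf_{1,2}^{\trans}$ by Itô isometry.
\end{itemize}

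The algebraic heart of the proof is to apply the four block equations extracted from the FDR \cref{eq:Lyapunov:Gamma:q}: the $(1,1)$ block gives $\Sigmabf_{1}\Sigmabf_{1}^{\trans} = \Gammabf_{1,1}+\Gammabf_{1,1}^{\trans}$, the $(2,1)$ block gives $\Sigmabf_{2}\Sigmabf_{1}^{\trans} = \Gammabf_{2,1}+\Q\Gammabf_{1,2}^{\trans}$, and the $(2,2)$ block gives $\Sigmabf_{2}\Sigmabf_{2}^{\trans} = \Gammabf_{2,2}\Q+\Q\Gammabf_{2,2}^{\trans}$. The last of these is the key: it identifies the integrand in $CC^{\trans}$ as the exact derivative
\[
\tfrac{\dd}{\dd s}\!\left[e^{-\Gammabf_{2,2}(t-s)}\Q\,e^{-\Gammabf_{2,2}^{\trans}(t'-s)}\right] = e^{-\Gammabf_{2,2}(t-s)}\Sigmabf_{2}\Sigmabf_{2}^{\trans}e^{-\Gammabf_{2,2}^{\trans}(t'-s)},
\]
so that the integral telescopes to $e^{-\Gammabf_{2,2}(t-t')}\Q - e^{-\Gammabf_{2,2} t}\Q e^{-\Gammabf_{2,2}^{\trans} t'}$. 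The second piece here cancels the $BB^{\trans}$ contribution exactly, leaving only $\Gammabf_{1,2}e^{-\Gammabf_{2,2}(t-t')}\Q\Gammabf_{1,2}^{\trans}$ from the sum $BB^{\trans}+CC^{\trans}$.

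Finally, combining this remaining piece with the cross term $CA^{\trans}$ and using the $(2,1)$ block identity yields $\Gammabf_{1,2}e^{-\Gammabf_{2,2}(t-t')}(\Q\Gammabf_{1,2}^{\trans} - \Sigmabf_{2}\Sigmabf_{1}^{\trans}) = -\Gammabf_{1,2}e^{-\Gammabf_{2,2}(t-t')}\Gammabf_{2,1}$, which together with the delta term gives an expression depending only on $t-t'$, matching the claimed covariance $\K$ from \cref{eq:def:K}. Stationarity then follows because both the covariance and the mean are translation-invariant in time. The main obstacle I anticipate is purely bookkeeping: keeping track of the nine cross terms, the formal manipulation of $\dot{\W}(t)\dot{\W}^{\trans}(t')$ at coinciding times (handled by interpreting the Itô–white cross correlation as a delta), and orchestrating the three FDR block identities so that the telescoping cancellation produces exactly the correct form of $\K$.
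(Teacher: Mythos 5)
Your proposal is correct and follows essentially the same route as the paper: decompose $\etaq$ into the white-noise, initial-condition and It\^o-integral parts, compute the surviving cross terms, invoke the block identities from \cref{eq:Lyapunov:Gamma:q}, and cancel the non-stationary pieces by integrating the $\Sigmabf_{2}\Sigmabf_{2}^{\trans}$ term by parts. The only differences are presentational (you make the telescoping derivative explicit where the paper just says ``partial integration'').
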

In the absence of a white-noise component in the random force, i.e., $\Gammaq_{1,1},\Sigmaq_{1,1}\equiv \0$, together with the requirement of $\Gammaq(\q)$ to be stable for all $\q\in \qDomain$, \cref{as:gle:q} imposes a constraint on the form $\Gammaq_{1,2}$ and $\Gammaq_{2,1}$ as shown in the following  \cref{lem:purecolor}.
\begin{proposition}\label{lem:purecolor}
Let $\Gammaq,\Sigmaq, \Q$ be such that the conditions of \cref{prop:GLE:cons:q} are satisfied. $\Gammaq_{1,1} \equiv \0$ implies
 \begin{equation}\label{eq:noise:purecolor}
\forall \,\q \in \qDomain:~\Gammaq_{1,2}(\q)\Q = - \Gammaq_{2,1}^{\trans}(\q).
\end{equation}
\begin{proof}
Writing \cref{eq:Lyapunov:Gamma:q} in terms of the sub-blocks of $\Gammaq$ we find
\begin{equation}\label{eq:Lyapunov:Gamma:q:sub}
\begin{pmatrix}
\0  & \Gammaq_{1,2}(\q) \Q + \Gammaq_{2,1}^{\trans}(\q) \\
\Q \Gammaq_{1,2}^{\trans}(\q)  + \Gammaq_{2,1}(\q)  & \Gammaq_{2,2}(\q) \Q + \Q \Gammaq_{2,2}^{\trans}(\q) 
\end{pmatrix}
=\Sigmaq(\q) \Sigmaq^{\trans}(\q) .
\end{equation}
By \cref{lem:schur:posdef} (iii) it follows that the left hand side of \cref{eq:Lyapunov:Gamma:q:sub} is a positive semi-definite matrix for all $\q\in\qDomain$ if and only if \cref{eq:noise:purecolor} holds.
\qed\end{proof}
\end{proposition}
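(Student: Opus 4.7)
The plan is to exploit the block structure of the fluctuation--dissipation identity in \cref{as:gle:q} together with the fact that a positive semi-definite matrix with a zero diagonal block must have vanishing off-diagonal blocks. Because the right-hand side of \cref{eq:Lyapunov:Gamma:q} is $\Sigmaq(\q)\Sigmaq^{\trans}(\q)$, which is symmetric and positive semi-definite at every $\q \in \qDomain$, so is the left-hand side. I will use this PSD property to extract a rigid constraint on the off-diagonal blocks once $\Gammaq_{1,1}$ is set to zero.

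First, I would compute \cref{eq:Lyapunov:Gamma:q} in $2\times 2$ block form. Using the decomposition
\[
\Gammaq(\q) = \begin{pmatrix} \Gammaq_{1,1}(\q) & \Gammaq_{1,2}(\q)\\ \Gammaq_{2,1}(\q) & \Gammaq_{2,2}(\q) \end{pmatrix},
\]
the $(1,1)$ block of the left-hand side equals $\Gammaq_{1,1}(\q) + \Gammaq_{1,1}^{\trans}(\q)$, the $(1,2)$ block equals $\Gammaq_{1,2}(\q)\Q + \Gammaq_{2,1}^{\trans}(\q)$, and the $(2,2)$ block equals $\Gammaq_{2,2}(\q)\Q + \Q\Gammaq_{2,2}^{\trans}(\q)$. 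Under the hypothesis $\Gammaq_{1,1} \equiv \0$, the $(1,1)$ block of $\Sigmaq(\q)\Sigmaq^{\trans}(\q)$ therefore vanishes identically.

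Now I invoke the standard linear-algebra fact (this is the content of \cref{lem:schur:posdef}(iii)): if a symmetric matrix
\[
M = \begin{pmatrix} A & B \\ B^{\trans} & C \end{pmatrix}
\]
is positive semi-definite and $A = \0$, then necessarily $B = \0$. Applying this to $\Sigmaq(\q)\Sigmaq^{\trans}(\q)$, the vanishing of the $(1,1)$ block forces the $(1,2)$ block to vanish as well, i.e.\ $\Gammaq_{1,2}(\q)\Q + \Gammaq_{2,1}^{\trans}(\q) = \0$ for every $\q \in \qDomain$, which is precisely \cref{eq:noise:purecolor}.

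I do not anticipate a substantive obstacle: the entire argument is a one-step consequence of block PSD-ness, and the hypothesis on $\Q$ being symmetric positive definite is not even directly needed for this implication (only the self-adjointness of $\mathrm{diag}(\I_{n},\Q)$ matters to keep the left-hand side symmetric). The only care needed is to make sure the cited lemma \cref{lem:schur:posdef}(iii) is stated in the form used, but that is purely bookkeeping.
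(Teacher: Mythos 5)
Your proof is correct and follows essentially the same route as the paper: write \cref{eq:Lyapunov:Gamma:q} in block form, note that $\Gammaq_{1,1}\equiv\0$ kills the $(1,1)$ block, and conclude via \cref{lem:schur:posdef}(iii) that the off-diagonal block $\Gammaq_{1,2}(\q)\Q + \Gammaq_{2,1}^{\trans}(\q)$ must vanish. Your side remark that only the symmetry of $\mathrm{diag}(\I_{n},\Q)$ is needed here is also accurate.
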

\subsubsection{Equilibrium generalized Langevin equation}
In the particular case of a conservative force, i.e., ${\bm F} = -\nabla U$, one can easily derive a closed form solution for an invariant measure of the SDE \cref{eq:gle:q} if \cref{as:gle:q} holds:
\begin{proposition}\label{prop:GLE:cons:q}
Let ${\bm F} = -\nabla U$, and let \cref{as:gle:q} hold. The SDE \eqref{eq:gle:q}  conserves the probability measure $\mu_{\Q,\beta}(d\x)$ with density
\begin{equation}\label{eq:invariant:measure}
\rho_{\Q,\beta}(\x)\propto e^{-\beta [ U(\q) + \frac{1}{2}\p^{\trans} \M^{-1}\p + \frac{1}{2}\s^{\trans} \Q^{-1} \s]}.\end{equation}
\begin{proof}
The statement follows by inspection of the stationary Fokker-Planck equation associated with the SDE \eqref{eq:gle:q}.
\qed\end{proof}
\end{proposition}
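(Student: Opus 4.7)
The plan is to verify directly that $\mu_{\Q,\beta}$ satisfies the stationary Fokker--Planck equation associated with the generator introduced in \cref{subsec:generator}, i.e., that $\Lcgq^{\ast} \rho_{\Q,\beta} = 0$, where $\Lcgq^{\ast}$ is the $L^{2}$-adjoint of $\Lcgq = \Lc_{H}+\Lc_{O}$. I would split the verification into the Hamiltonian piece $\Lc_{H}$ and the Ornstein--Uhlenbeck piece $\Lc_{O}$, which act on disjoint sets of variables.

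For $\Lc_{H}$: the drift fields $\M^{-1}\p$ (in $\q$) and $\bm F(\q)=-\nabla U(\q)$ (in $\p$) are divergence-free in their respective variables, so $\Lc_{H}^{\ast}\rho = -\Lc_{H}\rho$ for any smooth $\rho$. A short computation shows $\Lc_{H}H_{\mathrm{ext}}=0$ for the extended energy $H_{\mathrm{ext}}(\q,\p,\s):=U(\q)+\tfrac12\p^{\trans}\M^{-1}\p+\tfrac12\s^{\trans}\Q^{-1}\s$, and since $\rho_{\Q,\beta}\propto e^{-\beta H_{\mathrm{ext}}}$ depends on $\x$ only through $H_{\mathrm{ext}}$, it follows that $\Lc_{H}^{\ast}\rho_{\Q,\beta}=0$.

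For $\Lc_{O}$: the coefficients depend only on $\q$, so I can treat $\q$ as a frozen parameter and view $\Lc_{O}$ as an operator in $\z=(\p,\s)$. Introduce the symmetric positive definite matrices $G:=\mathrm{diag}(\M^{-1},\Q^{-1})$ and $\widetilde{\Q}:=\mathrm{diag}(\I_{n},\Q)$, and note the key identity
\[
\begin{pmatrix}\M^{-1} & \0 \\ \0 & \I_{m}\end{pmatrix} = \widetilde{\Q}\,G,
\]
which allows me to rewrite the drift in $\Lc_{O}$ as $-\Gammaq(\q)\widetilde{\Q}G\z$. Using $\nabla_{\z}\rho_{\Q,\beta} = -\beta G\z\,\rho_{\Q,\beta}$ and $\nabla_{\z}^{2}\rho_{\Q,\beta}=\bigl[\beta^{2}G\z\z^{\trans}G-\beta G\bigr]\rho_{\Q,\beta}$, the ratio $\Lc_{O}^{\ast}\rho_{\Q,\beta}/\rho_{\Q,\beta}$ decomposes into a quadratic-in-$\z$ piece and a constant piece. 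Substituting the fluctuation-dissipation relation $\Gammaq\widetilde{\Q}+\widetilde{\Q}\Gammaq^{\trans}=\Sigmaq\Sigmaq^{\trans}$ from \cref{as:gle:q}, the two contributions to the quadratic part and the two contributions to the trace part each cancel pairwise (the quadratic cancellation uses the scalar-symmetry identity $\z^{\trans}G\Gammaq\widetilde{\Q}G\z=\z^{\trans}G\widetilde{\Q}\Gammaq^{\trans}G\z$, and the trace cancellation uses the cyclic invariance of the trace).

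The main obstacle is conceptual rather than computational: the drift is built from $\mathrm{diag}(\M^{-1},\I_{m})$, whereas the target density pairs the momenta against $\M^{-1}$ and the auxiliary variables against $\Q^{-1}$. Recognising that the ``bridge'' matrix $\widetilde{\Q}$ appearing in Assumption~\ref{as:gle:q} is precisely the factor converting $\mathrm{diag}(\M^{-1},\I_{m})$ into the Hessian $G$ of the Gaussian log-density is what makes the Lyapunov-type identity \eqref{eq:Lyapunov:Gamma:q} the correct form of the fluctuation-dissipation relation; once this is in place, the cancellations are routine matrix/trace manipulations.
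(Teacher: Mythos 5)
Your proposal is correct and is precisely the computation the paper leaves implicit: the paper's proof simply states that the result ``follows by inspection of the stationary Fokker--Planck equation,'' and you carry out that inspection, with the Hamiltonian part vanishing by antisymmetry and the Ornstein--Uhlenbeck part cancelling via the relation $\Gammaq\widetilde{\Q}+\widetilde{\Q}\Gammaq^{\trans}=\Sigmaq\Sigmaq^{\trans}$ of \cref{as:gle:q}. Both the quadratic and trace cancellations you describe check out, so this is a valid filling-in of the paper's one-line argument rather than a different route.
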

\subsection{Non-equilibrium quasi-Markovian generalized Langevin equations without fluctuation-dissipation relation}
In general one might also consider instances of \cref{eq:gle:q}, where a fluctuation dissipation relation in the form of \cref{as:gle:q} does not hold. Such situations might appear in the modelling of temperature gradients or swarming/flocking phenomena; see, e.g., \cite{sachs2017langevin} for Markovian variants of such models. For example, one may consider an instance of \cref{eq:gle:q}, where $\Gammaq$ and $\Sigmaq$ are of the form
\begin{equation}
\Gammaq =
\begin{pmatrix}
\hat{\Gammabf}_{1,1}^{(1)} &  \hat{\Gammabf}^{(1)}_{1,2} & \hat{\Gammabf}^{(2)}_{1,2}\\
\hat{\Gammabf}^{(1)}_{2,1} & \hat{\Gammabf}^{(1)}_{2,2} & \0\\
\0 & \0 &\hat{\Gammabf}^{(2)}_{2,2}
\end{pmatrix}, ~~
\Sigmaq = 
\begin{pmatrix}
\hat{\Sigmabf}_{1,1}^{(2)} & \0 & \0\\
\0 & \0 & \0\\
\0 & \0 &\hat{\Sigmabf}^{(2)}_{2,2}
\end{pmatrix},
\end{equation}
where 
\[
\hat{\Gammabf}_{1,1}^{(1)},\hat{\Sigmabf}_{1,1}^{(1)}\in \mathcal{C}^{\infty}(\qDomain,\RR^{n\times n }),  
~~~
\hat{\Gammabf}^{(1)}_{1,2}, \left (\hat{\Gammabf}^{(1)}_{2,1} \right)^{\trans}, \hat{\Gammabf}^{(2)}_{1,2} \in \mathcal{C}^{\infty}(\qDomain,\RR^{n\times \hat{m} }),
\]
and
\[
\hat{\Gammabf}^{(1)}_{2,2}, \hat{\Gammabf}^{(2)}_{2,2}, \hat{\Sigmabf}^{(2)}_{2,2} \in \mathcal{C}^{\infty}(\qDomain,\RR^{\hat{m}\times \hat{m}}),
\]
with $\hat{m} \in \NN$ such that $m = 2\hat{m}$ and $\hat{m} \geq n$.
One can easily verify that in the view of the corresponding non-Markovian form \cref{eq:gle:q:nonmark}, the coefficients $\hat{\Gammabf}_{i,j}^{(1)}, 1\leq i,j \leq 2 $ determine the statistical properties of the dissipation, i.e., the form of the convolution functional $\Kq_{\Gammaq}(\q,t) * \p$, and the coefficients $\hat{\Gammabf}_{1,2}^{(2)},\hat{\Gammabf}_{2,2}^{(2)}$ and $\hat{\Sigmabf}_{1,1}^{(2)},\hat{\Sigmabf}_{2,2}^{(2)}$ determine the statistical properties of the random force   $\etaq$. As a simple example we mention the case where the coefficients $\hat{\Gammabf}_{i,j}^{(k)}$ and $\hat{\Sigmabf}_{i,j}^{(k)}$ are constant, i.e., 
\[
\hat{\Gammabf}_{1,1}^{(1)},\hat{\Sigmabf}_{1,1}^{(1)}\in \RR^{n\times n },  \hat{\Gammabf}^{(1)}_{1,2}, \left (\hat{\Gammabf}^{(1)}_{2,1} \right)^{\trans}, \hat{\Gammabf}^{(2)}_{1,2} \in \RR^{n\times m },\hat{\Gammabf}^{(1)}_{2,2}, \hat{\Gammabf}^{(2)}_{2,2}, \hat{\Sigmabf}^{(2)}_{2,2} \in \RR^{m\times m},
\]
with $\hat{\Sigmabf}^{(2)}_{2,2} =   \hat{\Gammabf}^{(2)}_{2,2}+ \left (\hat{\Gammabf}^{(2)}_{2,2}\right )^{\trans}$.
Under suitable conditions  on these matrices (compare with the respective conditions stated in the preceding sections), it can then be easily shown that the SDE \cref{eq:gle:q} can be rewritten as
\begin{equation}
\begin{aligned}
\dot{\q} &= \M^{-1} \p,\\
\dot{\p} &= {\bm F}(\q) - \int_{0}^{t}\K_{1} (t-s) \p(s) \dd s + \etaq,
\end{aligned}
\end{equation}
where 
\begin{equation}
\K_{1}(t) =  \delta(t)\hat{\Gammabf}^{(1)}_{1,1} -   \hat{\Gammabf}^{(1)}_{1,2} e^{-t \hat{\Gammabf}^{(1)}_{2,2}}\hat{\Gammabf}^{(1)}_{2,1},
\end{equation}
and $\etaq$ is a stationary Gaussian process with covariance function $\K_{2}$ of the form 
\begin{equation}
\K_{2}(t) = 2\delta(t) \hat{\Sigmabf}^{(2)}_{1,1} + \hat{\Gammabf}^{(2)}_{1,2} e^{-t \hat{\Gammabf}^{(2)}_{2,2}}\left(\hat{\Gammabf}^{(2)}_{1,2} \right )^{\trans}.
\end{equation}

\subsection{Markovian representations of the GLE in the literature}\label{sec:review:markovian}
In the special case of $\Gammaq,\Sigmaq$ being constant (see \cref{ex:gle:1}), the Markovian representation \cref{eq:gle:q}  is of similar generality to that  presented in \cite{Ceriotti2010,Lei2016a} and the steps in the derivation are essentially the same (see also \cite[Chapter 8]{Pavliotis2014}). 
Likewise, a derivation of a Markovian representation of the form  \cref{eq:gle:q} can for example be found in a slightly less general setup in \cite{lim2017homogenization}. 
We point out that besides the above mentioned generic frameworks, there are many Markovian representations of the GLE mentioned in the literature which are derived in the context of a particular physical model or application. For example, the Markovian representations of the GLE derived in \cite{Doll1976,adelman1976generalized,Kupferman2004,Rey-Bellet2006b} can be considered as special instances of the SDE \cref{eq:gle:q} with constant coefficients $\Gammaq,\Sigmaq$. 
Similarly, some of the non-equilibrium models studied in \cite{Eckmann1998,Eckmann1999,Eckmann1999a,rey2003statistical,Rey-Bellet2006b} can be represented in the form of  \cref{eq:gle:q} with constant coefficients $\Gammaq,\Sigmaq$. 
Markovian representations of the GLE with position dependent memory kernels, which can be viewed as instances of the SDE \cref{eq:gle:q} can be found in \cite{Kantorovich2008,Ness2015,Ness2016,li2017computing}.


\subsubsection{Sufficient condition for the existence of a Markovian representation}
Let $\etabf$ be a real-valued stationary Gaussian process with vanishing mean and covariance function $\K \in \mathcal{C}(\RR, \RR)$, i.e.,
\[
\forall s,t \in \RR, ~\EE[\etabf(t)] = 0,~ \K(t) = \EE[\etabf(s+t)\etabf(s)].
\] 
We denote by  $\widehat{\mu}_{\K}$ the spectral measure of $\K$, i.e.,
\[
\K(t) = \int_{\RR}e^{ikt} \dd \widehat{\mu}_{\K}(k). 
\]
Note that the existence of the spectral measure is a direct consequence of the following proposition, which is an adapted (and simplified) version of what is commonly referred to as Bochner's theorem.
\begin{proposition}\label{prop:bochner}
A complex-valued function $C$ with domain $\RR$ is the covariance function of a continuous weakly stationary\footnote{A stochastic process $(X(t))_{t\in\RR}$ with associated covariance function $C$ is said to be weakly stationary if $\EE[X(t)] = \EE[X(t+s)]=0$ and $C(0,s)= C(t,t+s)$ for all $t,s \in\RR$. Since Gaussian processes are fully characterized by the mean and covariance function, a Gaussian processes is weakly stationary if and only if it is stationary. } random process on $\RR^{n}$ with finite first and second moments, if and only if it can be represented as
 \[
C(t) =\int_{\RR}e^{i t  k } \dd \mu (k),
\]
where $\mu$ is a positive finite measure.
\end{proposition}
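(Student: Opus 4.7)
The statement is a real-line, positive-finite-measure version of the classical Bochner theorem, so the plan is to reduce it to that theorem together with an elementary Kolmogorov-extension construction. The two implications are essentially independent and I would treat them separately.

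For the easy direction ($\Leftarrow$), suppose $C(t)=\int_{\RR}e^{itk}\,\dd\mu(k)$ with $\mu$ a positive finite measure. Continuity of $C$ follows immediately from dominated convergence, since $|e^{itk}|\le 1$ and $\mu(\RR)<\infty$. Positive semidefiniteness is the key algebraic observation: for any $t_{1},\dots,t_{N}\in\RR$ and $c_{1},\dots,c_{N}\in\CC$,
\[
\sum_{j,k=1}^{N}c_{j}\bar{c}_{k}\,C(t_{j}-t_{k}) \;=\; \int_{\RR}\Bigl|\sum_{j=1}^{N}c_{j}e^{it_{j}k}\Bigr|^{2}\,\dd\mu(k)\;\geq\;0.
\]
Having a continuous, positive semidefinite function $C$ with $C(0)=\mu(\RR)<\infty$, one invokes Kolmogorov's consistency theorem to construct a complex-valued centered Gaussian process $(X(t))_{t\in\RR}$ whose finite-dimensional distributions are determined by the covariance kernel $(s,t)\mapsto C(t-s)$; continuity of $C$ yields mean-square (hence stochastic) continuity of $X$, and stationarity of the covariance structure combined with centeredness gives weak stationarity.

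For the harder direction ($\Rightarrow$), suppose $C$ is the covariance function of a continuous weakly stationary process $(X(t))_{t\in\RR^{n}}$ with finite second moments. First I would verify that $C$ is continuous and positive semidefinite: continuity is assumed, and positive semidefiniteness is immediate from
\[
\sum_{j,k}c_{j}\bar{c}_{k}\,C(t_{j}-t_{k}) \;=\; \EE\Bigl[\,\Bigl|\sum_{j}c_{j}X(t_{j})\Bigr|^{2}\Bigr]\;\geq\;0.
\]
At this point the classical Bochner theorem (see, e.g., Rudin's \emph{Fourier Analysis on Groups}) asserts that every continuous positive semidefinite function on $\RR$ admits a representation $C(t)=\int_{\RR}e^{itk}\,\dd\mu(k)$ with $\mu$ a positive finite Borel measure; the total mass is $\mu(\RR)=C(0)=\Var(X(t))<\infty$, confirming finiteness of $\mu$.

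The main conceptual obstacle is the invocation of the classical Bochner theorem itself, whose proof is nontrivial (it uses the Herglotz/Riesz representation theorem and an approximation argument, e.g.\ regularizing $C$ by Fej\'er-type kernels to obtain nonnegative Fourier transforms, followed by a weak-$*$ compactness argument to extract the measure $\mu$). Since this is a standard result I would simply cite it rather than reproduce the proof, so the only nontrivial work that remains inside this excerpt is the verification of positive semidefiniteness on both sides and the construction of the Gaussian realization, both of which are routine.
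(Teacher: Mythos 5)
Your proposal is correct and follows essentially the same route as the paper, which offers no proof of its own but simply refers the reader to standard Fourier-analysis references (e.g.\ Rudin) for the classical Bochner theorem; you do the same, merely adding the routine verifications (positive semidefiniteness on both sides, the Kolmogorov/Gaussian realization, and mean-square continuity) that the paper leaves implicit.
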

The above \cref{prop:bochner} is a simplified version of \cite{stein2012interpolation}. For a proof of the theorem we refer to any standard text book in Fourier analysis, such as \cite[Chapter 1]{rudin2017fourier}.\\

Assume that $\widehat{\mu}_{\K}$ possesses a density with respect to the Lebesgue measure, i.e.,
\[
\widehat{\mu}_{\K}(\dd k)= \widehat{\rho}_{\K}(k)\dd k.
\]
It has been observed in \cite{rey2003statistical} (see also \cite{Eckmann1998,Rey-Bellet2006b} for similar results), that $\left ( \widehat{\rho}_{\K}(k) \right )^{-1}$ being polynomial implies that $\etabf$ can be rewritten as a Markov process in an extended phase space. This can be seen as a consequence of the following criteria for Markovianity: 
\begin{proposition}\label{prop:markov:criteria:general}
If $p(k) =  \sum_{m=1}c_{m}(-ik)^{m}$ is a polynomial with real coefficients and roots in upper half plane then the Gaussian process with spectral density $\abs{p(k)}^{-2}$ is the solution of the stochastic differential equation
\[
p \left (- i \frac{\dd}{\dd t} \right ) \etabf(t)\dd t = \dd \W(t)
\]
\end{proposition}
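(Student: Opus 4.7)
The plan is to convert the formal $m$-th order scalar SDE $p(-i\dd/\dd t)\etabf(t)\dd t = \dd W(t)$ into a first-order linear SDE on $\RR^m$, establish existence of a stationary Gaussian solution, and then compute its spectral density in closed form. Write $p(k) = \sum_{j=0}^{m} c_{j}(-ik)^{j}$ with leading coefficient $c_{m}\neq 0$. Introduce the extended state $X = (X_{1},\ldots,X_{m})^{\trans}$ with $X_{j} = (-\dd/\dd t)^{j-1}\etabf$, so that the operator equation becomes the first-order system
\begin{equation*}
\dd X(t) = A\,X(t)\,\dd t + B\,\dd W(t),\qquad \etabf(t) = e_{1}^{\trans} X(t),
\end{equation*}
where $A\in\RR^{m\times m}$ is (up to reordering) the companion matrix associated with $p$ and $B = (0,\ldots,0,1/c_{m})^{\trans}$. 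Because the equation is linear with constant coefficients, this is a standard Ornstein--Uhlenbeck-type SDE, and the original scalar equation is simply an equivalent reformulation.

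The next step is to exploit the root hypothesis. The characteristic exponents of the homogeneous part are precisely the $\lambda$ with $p(-i\lambda)=0$, i.e., $\lambda = ik_{j}$ where $k_{j}$ is a root of $p$. Writing $k_{j}= a_{j}+ib_{j}$ with $b_{j}>0$ gives $\mathrm{Re}(\lambda) = -b_{j}<0$. Hence $A$ is Hurwitz, so the SDE admits a unique invariant Gaussian distribution $\mathcal{N}(\0,\Sigma)$ with $\Sigma$ solving the Lyapunov equation $A\Sigma + \Sigma A^{\trans} + BB^{\trans}=0$. Started from this distribution, $(X(t))_{t\in\RR}$ (extended to the two-sided line by standard arguments) is a stationary Gaussian process, and consequently so is $\etabf(t) = e_{1}^{\trans}X(t)$.

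It then remains to identify its spectral density. Working in the frequency domain, the stationary solution satisfies $(ik\,I - A)\widehat{X}(k) = B\,\widehat{\dot W}(k)$, so the transfer function from white noise to $\etabf$ is
\begin{equation*}
H(k) \,=\, e_{1}^{\trans}(ik\,I - A)^{-1} B.
\end{equation*}
Since $\dot W$ has constant spectral density (normalized to one), the spectral density of $\etabf$ is $|H(k)|^{2}$. The core algebraic identity to verify is $H(k) = 1/p(k)$, from which the conclusion $\widehat{\rho}_{\K}(k) = |p(k)|^{-2}$ follows. This identity is the usual transfer-function formula for controllable companion realizations: $e_{1}^{\trans}(ik I - A)^{-1}B$ equals the first-column, last-row cofactor of $(ik I - A)$ divided by $\det(ik I - A)$, and a direct cofactor expansion along the companion structure shows the numerator is $1/c_{m}$ and the denominator is $p(k)/c_{m}$.

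The main obstacle, and really the only non-routine point, is this algebraic identification of the transfer function with $1/p$, which requires carefully tracking sign conventions in the substitution $k\mapsto -i\,\dd/\dd t$ and the chosen ordering of the auxiliary variables $X_{j}$. Once this identity is in place, stationarity plus the Wiener--Khinchin theorem immediately yield the claim; the Hurwitz property earned from the upper half plane root condition both guarantees existence of the stationary law and ensures that $|p(k)|^{-2}$ is integrable on $\RR$, so the spectral density indeed defines a bona fide finite measure in the sense of \cref{prop:bochner}.
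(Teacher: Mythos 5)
Your proposal is correct. Note that the paper itself does not prove this proposition --- it quotes it from the Rey-Bellet reference --- and your argument (companion-form Markovian realization, Hurwitz property from the upper-half-plane root condition via $\lambda = ik_j$ so $\mathrm{Re}\,\lambda = -\mathrm{Im}\,k_j < 0$, then the transfer-function identity $e_{1}^{\trans}(ik\,\I-A)^{-1}B = 1/p(k)$ and Wiener--Khinchin) is precisely the standard self-contained proof that the cited reference supplies. The one step you flag as the crux, the cofactor computation showing $H(k)=1/p(k)$, can be bypassed entirely by taking Fourier transforms of the scalar equation directly: under $\dd/\dd t \mapsto ik$ one has $p(-i\,\dd/\dd t) \mapsto p(-i\cdot ik) = p(k)$, which gives $\hat{\etabf}(k) = \hat{\dot{W}}(k)/p(k)$ and hence the spectral density $\abs{p(k)}^{-2}$ (up to the usual normalization constant for white noise) without any linear algebra; the first-order realization is then needed only to justify existence and uniqueness of the stationary solution, which your Lyapunov-equation argument handles.
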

The above proposition is quoted from \cite{Rey-Bellet2006b}. A simple and self-contained proof is also provided in this reference. For a more comprehensive discussion, we refer to \cite{dym2008gaussian}. \\

As detailed in \cite{Rey-Bellet2006b} the inverse density $\left ( \widehat{\rho}_{\K}(k) \right )^{-1}$ being a polynomial indeed implies the applicability of \cref{prop:markov:criteria:general}, as positivity of the measure $\widehat{\mu}_{\K}$ follows from Bochner's theorem. Therefore $\widehat{\rho}_{\K}$ must be a positive function, i.e., a positive polynomial of even degree, which in turn implies the existence of a suitable polynomial $p(k) =  \sum_{m=1}c_{m}(-ik)^{m}$ with properties as stated in \cref{prop:markov:criteria:general}.\\

\cref{prop:markov:criteria:general} has been used extensively to derive finite dimensional Markovian representations of the type of heat bath models used in \cite{Eckmann1998,Eckmann1999a,rey2003statistical,Rey-Bellet2006b}. Similarly, \cref{prop:markov:criteria:general}  can also be used 
to derive suitable distributions for the spring constants and the heat bath particle masses in the Ford-Kac model which ensure that in the thermodynamic limit the path of the distinguished particle converges weakly to the solution of a stochastic IDE which can be represented in a Markovian form; see \cite{Kupferman2002,Kupferman2004,Givon2004}.

\section{Ergodicity properties}\label{sec:ergodic:GLE} 
Let $e^{t\Lcgq}$ denote the associated evolution operator of the process \cref{eq:gle:q}, i.e.,
\begin{equation}
\forall \varphi  \in \mathcal{C}^{\infty}(\xDomain,\RR) : e^{t\Lcgq} \varphi(x) = \EE[ \varphi(\x(t)) \given \x(0) = x],
\end{equation}
where the expectation is taken with respect to the Brownian motion $\W$. In this section we derive criteria for exponential convergence of  $e^{t\Lcgq}$  in some weighted $L^{\infty}$ space as $t\rightarrow \infty$. More precisely, define for a prescribed $\Kc \in \mathcal{C}^{\infty}(\xDomain, [1,\infty))$ with the property that $\Kc(\x) \rightarrow \infty$ as $\norm{\x}\rightarrow \infty$ the set 
\begin{equation}
L^{\infty}_{\Kc}(\xDomain) := \left \{  \varphi \text{ measurable } :\norm{\varphi}_{L^{\infty}_{\Kc}}<\infty  \right \},
\end{equation}
where 
\begin{equation}
\norm{\varphi}_{L^{\infty}_{\Kc}} := \norm*{ \frac{\varphi}{\Kc}}_{\infty}, ~\varphi : \xDomain \rightarrow \RR~\text{measureable},
\end{equation}
so that $\big ( L^{\infty}_{\Kc}(\xDomain) , \norm{\cdot}_{L^{\infty}_{\Kc}} \big)$ can be verified to define a Banach space. Furthermore, denote by 
\begin{equation}
\EE_{\mu} \varphi  := \int \varphi(\x) \mu(\dd \x),
\end{equation}
the expectation of an observable $\varphi$ with respect to the probability measure $\mu$.\\

We show under certain conditions on the coefficients $\Gammaq,\Sigmaq$ and the force ${\bm F}$ that there exists a unique probability measure with smooth density $\mu(\dd \x)= \rho(\x)\dd \x$, such that


\begin{equation}\label{eq:geo:conv:gle}
\exists\, \kappa>0, C>0, ~\forall \,\varphi \in L^{\infty}_{\Kc}, ~
\norm*{ \EE_{\mu}\varphi - e^{t \Lcgq} \varphi }_{L^{\infty}_{\Kc}} \leq C e^{-\kappa t }\norm*{ \EE_{\mu}\varphi   - \varphi}_{L^{\infty}_{\Kc}},
\end{equation}
for all $t \geq 0$, and
\begin{equation}\label{eq:bound:moments}
\int_{\xDomain}\Kc(\x)\mu(\dd \x) <\infty,
\end{equation}
where $\Kc$ is a suitable Lyapunov function whose exact properties we specify below. In particular, if ${\bm F} = -\nabla U$ and \cref{as:gle:q} holds, then 
\[
\mu(\dd \x) = \mu_{\Q,\beta}(\dd \x),
\]
where $\mu_{\Q,\beta}$ is as defined in \cref{prop:GLE:cons:q}. If the process \cref{eq:gle:q} satisfies \cref{eq:geo:conv:gle} for all $t\geq 0$, we say in the sequel that it is {\em geometrically ergodic}.\\

All results are derived using standard Lyapunov techniques (see e.g.  \cite{meyn1993stability,Meyn1997,Mattingly2002,Rey-Bellet2006a}), which we summarize in \cref{sec:stochastic:analysis}. That is, we show that (i) the minorization condition (\cref{as:minorization:inf}) is satisfied and (ii) a suitable Lyapunov function exists which satisfies \cref{as:lyapunov:inf}  (or more generally the existence of a suitable class of Lyapunov functions of which each instance satisfies \cref{as:lyapunov:inf}). We treat the cases $\qDomain= \mathbb{T}^{n}$ and $\qDomain = \RR^{n}$ separately. In the situation $\qDomain = \RR^{n}$, we show geometric ergodicity for the case of constant coefficients, i.e.,  $\Gammaq \equiv \Gammabf$, and $\Sigmaq \equiv \Sigmabf$, which in the non-Markovian form \cref{eq:gle:q:nonmark}  corresponds to the situation of a stationary random force. For the case of a bounded domain $\qDomain = \mathbb{T}^{n}$ we can show geometric ergodicity also for the case where $\Gammaq$ and $\Sigmaq$ are not constant in $\q$, i.e., the random force, $\etaq$, in the corresponding non-Markovian form \cref{eq:gle:q:nonmark} is non-stationary. In order to simplify presentation we assume for the remainder of this article $\M = \I_{n}$. 
\subsection{Summary of main results}\label{sec:sum:results}
Let in the sequel $g(x) = \bT(f(x))$ indicate that the function $f$ is bounded both above and below by $g$ asymptotically as $\norm{x}\rightarrow \infty$, i.e., there exist $c_{1},c_{2}>0$ and $\tilde{x} \geq 0$, such that 
$c_{1}g(x) \leq f(x) \leq c_{2}g(x)$ for all $\norm{x} \geq \tilde{x}$.
\subsubsection{Results for stationary noise}
We first present results for the constant coefficient case, i.e., $\Gammaq\equiv \Gammabf$, and $\Sigmaq\equiv \Sigmabf$. Let for the remainder of this subsection $\Gammabf,\Sigmabf$ be such that 
\begin{enumerate}[label=(\roman*)]
\item $-\Gammabf$ is a stable matrix, i.e., the real parts of all eigenvalues of $\Gammabf$ are positive.
\item the SDE \cref{eq:gle:q} satisfies the parabolic H\"ormander condition both in the presence of the force term ${\bm F}$ as well as in absence of a force term, i.e., ${\bm F} \equiv 0$. We provide algebraic conditions on $\Gammabf,\Sigmabf$ which imply the parabolic H\"ormander condition in \cref{subsection:hypo:criteria}.
\item \cref{as:gle:q} is satisfied so that for ${\bm F} = -\nabla U$ the measure $\mu_{\Q,\beta}(\dd \x) = \rho_{\Q,\beta}(\x) \dd \x$ with $ \rho_{\Q,\beta}$ as defined in \cref{eq:invariant:measure} is an invariant measure of \cref{eq:gle:q}.
\end{enumerate}
\begin{theorem}\label{thm:ergo:bounded:1}
Let $\qDomain = \mathbb{T}^{n}$, and $\Gammaq,\Sigmaq$ as specified above. There is a unique invariant measure $\mu$ such that for any $l\in \mathbb{N}$ there exists $\Kc_{l} \in \mathcal{C}^{\infty}( \mathbb{T}^{n} \times \RR^{n+m} )$ with 
\[
\Kc_{l}(\q,\p,\s) = \bT(\norm{\z}^{2l}), ~ \text{ as } \norm{\z} \rightarrow \infty, ~\z= \begin{pmatrix}\p \\ \s \end{pmatrix},
\]
so that \eqref{eq:geo:conv:gle} and \eqref{eq:bound:moments} hold for $\Kc=\Kc_{l}$. In particular, if  ${\bm F} = -\nabla U$, then $\mu = \mu_{\Q,\beta}$.
\end{theorem}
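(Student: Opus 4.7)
The plan is to apply the Harris-type framework summarized in \cref{sec:stochastic:analysis}: geometric ergodicity in $L^{\infty}_{\Kc}$ follows from (i) a Lyapunov drift inequality for $\Lcgq$ of the form $\Lcgq \Kc \leq -c\,\Kc + b\,\mathbf{1}_{C}$ on some compact set $C \subset \xDomain$, together with (ii) a minorization condition at some fixed time $t_{\star}>0$ on $C$. Because $\qDomain=\mathbb{T}^{n}$ is compact, coercivity is only needed in the extended momentum variable $\z=(\p,\s)$, so the construction of $\Kc_{l}$ reduces to exhibiting a function that grows like $\norm{\z}^{2l}$ and whose generator is strictly negative outside a compact set in $\z$.

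For the first step I would follow the ansatz used in \cite{Mattingly2002,Ottobre2011}, adapted to the more general extended-variable setup. Set
\[
\Kc_{0}(\x) = H(\q,\p) + \tfrac{1}{2}\s^{\trans}\Q^{-1}\s + \varepsilon\,\p^{\trans}\C\s + K_{\ast},
\]
where $K_{\ast}$ makes $\Kc_{0}\geq 1$, $\varepsilon>0$ is small, and $\C\in\RR^{n\times m}$ is to be determined. Since $U$ and $\nabla U$ are uniformly bounded on $\mathbb{T}^{n}$, every $\q$-dependent contribution appearing in $\Lcgq\Kc_{0}$ is bounded, and the problem reduces to a purely linear-algebraic one: choose $\C$ so that the quadratic form in $\z$ coming from the dissipative part $-\Gammabf\z\cdot\nabla_{\z}\Kc_{0}$ is strictly positive definite on the full extended vector $\z$, not merely on the auxiliary block. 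The fluctuation-dissipation relation \cref{as:gle:q} already guarantees that the symmetric part of $\Gammabf\,\mathrm{diag}(\I_{n},\Q)$ equals $\tfrac{1}{2}\Sigmabf\Sigmabf^{\trans}\succeq 0$; together with stability of $-\Gammabf$, a Lyapunov-equation argument on the full $(n+m)$-dimensional space then produces the required $\C$. Higher-order Lyapunov functions are taken as $\Kc_{l}=\Kc_{0}^{l}$: using $\Lcgq(\Kc_{0}^{l}) = l\,\Kc_{0}^{l-1}\Lcgq\Kc_{0} + \tfrac{l(l-1)}{2}\beta^{-1}\Kc_{0}^{l-2}\,\norm{\Sigmabf^{\trans}\nabla_{\z}\Kc_{0}}^{2}$, the It\^o correction is of order $\norm{\z}^{2(l-1)}$ while $\Lcgq\Kc_{0}$ is of order $-\norm{\z}^{2}$, so the leading dissipative term dominates and the drift inequality propagates from $\Kc_{0}$ to every $\Kc_{l}$.

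For the second step I would invoke hypoellipticity: the parabolic H\"ormander condition is assumed, so for every $t>0$ the law of $\x(t)$ given $\x(0)=x$ admits a density $p_{t}(x,y)$ smooth jointly in $(x,y)$. Combined with the Stroock--Varadhan support theorem applied to the deterministic skeleton ODE driven by smooth controls in the directions of the noise vector fields—whose iterated Lie brackets span the tangent space by the very H\"ormander hypothesis—this yields strict positivity of $p_{t_{\star}}(x,y)$ jointly in $(x,y)$ for some $t_{\star}>0$. A standard continuity argument then produces a uniform lower bound of $p_{t_{\star}}$ on any compact product set, furnishing minorization with respect to Lebesgue measure. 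The compactness of $\mathbb{T}^{n}$ makes this step considerably easier than the $\qDomain=\RR^{n}$ case, since no uniformity as $\norm{\q}\to\infty$ has to be tracked.

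Combining the two ingredients through the abstract results recalled in \cref{sec:stochastic:analysis} yields the existence and uniqueness of an invariant probability $\mu$, the moment bound \eqref{eq:bound:moments}, and the geometric convergence \eqref{eq:geo:conv:gle}. In the conservative case ${\bm F}=-\nabla U$, \cref{prop:GLE:cons:q} already exhibits $\mu_{\Q,\beta}$ as an invariant measure, and uniqueness forces $\mu=\mu_{\Q,\beta}$. I expect the main obstacle to be the first step: producing the coupling matrix $\C$ so that the quadratic form generated by $\Lcgq\Kc_{0}$ is genuinely negative definite on the full vector $\z$, rather than only on the auxiliary component. Once this linear algebra is in place, the extension to $\Kc_{l}$ and the minorization argument are essentially standard.
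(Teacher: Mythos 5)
Your Lyapunov construction is essentially the paper's (\cref{prop:lya:torus}): on $\mathbb{T}^{n}$ the force contributes only bounded terms, and the quadratic form $\z^{\trans}\C\z$ with $\C$ solving the Lyapunov equation $\Gammabf^{\trans}\C+\C\Gammabf\succ 0$ (which exists by stability of $-\Gammabf$) does all the work; your cross-term ansatz $\varepsilon\,\p^{\trans}\C\s$ is just a reparametrization of the same quadratic form, and your propagation from $\Kc_{0}$ to $\Kc_{0}^{l}$ via the order count of the It\^o correction matches the paper's computation. That half of the argument is sound.

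The minorization step, however, contains a genuine gap. You claim that the parabolic H\"ormander condition together with the Stroock--Varadhan support theorem ``yields strict positivity of $p_{t_{\star}}(x,y)$ jointly in $(x,y)$.'' It does not. The bracket condition gives smoothness of the transition density (\cref{thm:hormander}) and accessibility of the control skeleton (the reachable set has nonempty interior), but not controllability: unless you actually exhibit a control steering the skeleton ODE from $x$ into a neighborhood of an arbitrary $y$, the support theorem provides no lower bound on the support, and $p_{t_{\star}}(x,\cdot)$ may vanish on an open set. The paper is explicit that for \cref{thm:ergo:bounded:1} no control is constructed (see the discussion following \cref{lem:HiMaSt}); the controllability route is reserved for the full-rank case \cref{prop:gle:minorization:3}, where $\Sigmabf^{-1}$ exists and the control can be written down. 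Instead, \cref{thm:minorization:torus} decomposes the solution as the Gaussian solution of the force-free linear system \cref{gle:free} --- whose covariance is nondegenerate for every $t>0$ precisely because the H\"ormander condition is assumed to hold also for ${\bm F}\equiv 0$, so its law has full support --- plus a remainder driven by the force, which is uniformly bounded because $\qDomain=\mathbb{T}^{n}$. A full-support Gaussian shifted by a bounded perturbation still has a full-support continuous density, and minimizing that density over a compact set of initial data yields the minorizing measure. To close your argument you must either adopt this decomposition or genuinely solve the control problem, which for general $\Gammabf,\Sigmabf$ satisfying only the H\"ormander condition (degenerate $\Sigmabf_{1}$, non-square blocks) is not routine.
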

\begin{proof}
The validity of the minorization condition follows from \cref{thm:minorization:torus}. The existence of a suitable class of Lyapunov functions is shown in \cref{prop:lya:torus}.
\qed\end{proof}
In the case of an unbounded configurational domain, i.e., $\qDomain=\RR^{n}$, we require an additional assumption on the force ${\bm F}$ in order to construct a suitable class of Lyapunov functions.

\begin{assumption}\label{as:potential:unbounded}
There exists a potential function $V \in \mathcal{C}^{2}(\qDomain, \RR)$ with the following properties
\begin{enumerate}[label=(\roman*)] 
\item \label{as:potential:it:0} there exists $G \in \RR$ such that  
\[
\inner{\q,{\bm F}(\q)}  \leq -  \inner{\q,\nabla_{\q} V(\q)} + G.
\]
for all $\q \in \qDomain$.
\item \label{as:potential:it:1} the potential function 
is bounded from below, i.e., there exists $u_{\min} > -\infty$ such that
\[
\forall \q \in \qDomain,~ V(\q) \geq u_{\min}.
\]
\item \label{as:potential:it:2}  there exist constants $D,E > 0 $ and $F \in \mathbb{R}$ 
such that
\begin{equation}\label{cond:potential}
\forall \q \in \qDomain,~\inner{ \q,  \nabla_{\q} V(\q)}\geq D V(\q) + E \| \q\|_{2}^{2} + F.
\end{equation}
\end{enumerate}
\end{assumption}


\begin{theorem}\label{thm:ergo:unbounded:2}
Let $\qDomain =\RR^{n}$, ${\bm F}$ satisfies \cref{as:potential:unbounded}, $\Gammaq,\Sigmaq$ as specified above with ${\rm rank}(\Sigmabf) = n+m$ and ${\rm rank}(\Gammabf_{1,1})=n$.
There is a unique invariant measure $\mu$ such that  for any $l\in \mathbb{N}$ there exists $\Kc_{l} \in \mathcal{C}^{\infty}(\RR^{2n+m}, [1,\infty) )$ with 
\[
\Kc_{l}(\x) = \bT(\norm{\x}^{2l}), ~ \text{ as } \norm{\x} \rightarrow \infty, 
\]
such that \eqref{eq:geo:conv:gle} and \eqref{eq:bound:moments} hold for $\Kc=\Kc_{l}$. In particular, if  ${\bm F} = -\nabla U$, then $\mu = \mu_{\Q,\beta}$.
\end{theorem}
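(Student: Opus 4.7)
The plan is to apply the Meyn-Tweedie type framework summarized in \cref{sec:stochastic:analysis}, which reduces geometric ergodicity in $L^{\infty}_{\Kc}$ to two ingredients: (i) a minorization condition of the form \cref{as:minorization:inf}, and (ii) the existence, for each $l \in \NN$, of a Lyapunov function $\Kc_{l}$ satisfying the drift inequality of \cref{as:lyapunov:inf}. Since $\Gammaq, \Sigmaq$ are constant, the proof follows the same structural lines as the torus case (\cref{thm:ergo:bounded:1}), but the Lyapunov function must now control growth in $\q$ as well, and the minorization must hold despite the unbounded configurational domain. Once both ingredients are established, uniqueness of the invariant measure $\mu$, the bound \cref{eq:bound:moments}, and the exponential convergence \cref{eq:geo:conv:gle} all follow from the framework; in the conservative case $\mathbf{F} = -\nabla U$, the measure $\mu_{\Q,\beta}$ of \cref{prop:GLE:cons:q} is invariant, so uniqueness identifies $\mu = \mu_{\Q,\beta}$.

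For the minorization condition, the parabolic H\"ormander hypothesis (assumed both with and without the force) yields hypoellipticity of $\Lcgq$, so the transition kernel has a smooth density. Full rank of $\Sigmabf$ together with the coupling $\dot{\q} = \p$ renders the associated deterministic control system exactly controllable between any pair of points in $\xDomain$ on any time horizon, which gives strict positivity of the transition density for all $t>0$. Smoothness and strict positivity then yield a uniform lower bound on any compact set, which is precisely the minorization condition required.

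For the Lyapunov function, the ansatz follows the one used in \cite{Mattingly2002,Ottobre2011}, adapted to the more general extended variable formalism of \cref{eq:gle:q}. Starting from the energy-like quantity
\[
H_{0}(\q,\p,\s) \;=\; V(\q) - u_{\min} + 1 + \tfrac{1}{2}\|\p\|_{2}^{2} + \tfrac{1}{2}\s^{\trans}\Q^{-1}\s,
\]
with $V$ as in \cref{as:potential:unbounded}, I introduce a small cross term to generate coercivity in $\q$ and set
\[
\Kc_{l} \;:=\; \bigl( H_{0}(\q,\p,\s) + \varepsilon \inner{\q,\p} \bigr)^{l}.
\]
Applying $\Lcgq$ and invoking \cref{as:gle:q} to rewrite $\Sigmabf\Sigmabf^{\trans}$ in terms of $\Gammabf$ and $\Q$, the dissipative part of the generator contributes $-c_{1}\|\p\|_{2}^{2} - c_{2}\|\s\|_{2}^{2}$ thanks to $\mathrm{rank}(\Gammabf_{1,1}) = n$ and stability of $\Gammabf$, while the cross term $\varepsilon \inner{\q,\p}$ produces $\varepsilon\|\p\|_{2}^{2} + \varepsilon\inner{\q,\mathbf{F}(\q)}$. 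Item \ref{as:potential:it:0} of \cref{as:potential:unbounded} converts $\inner{\q,\mathbf{F}(\q)}$ into $-\inner{\q,\nabla V(\q)}$ up to a constant, and item \ref{as:potential:it:2} then bounds this from above by $-D V(\q) - E\|\q\|_{2}^{2} + F$. Choosing $\varepsilon$ small enough that $\varepsilon\|\p\|_{2}^{2}$ is absorbed into $-c_{1}\|\p\|_{2}^{2}$ (and similarly for the mixed $\inner{\p,\s}$ contributions) and that $H_{0} + \varepsilon\inner{\q,\p}$ remains $\Theta(\|\x\|_{2}^{2})$, one obtains $\Lcgq \Kc_{l} \leq -\kappa \Kc_{l} + d \mathbf{1}_{K}$ for some compact $K$, as required.

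The main obstacle is the linear-algebraic balancing in the drift computation. The auxiliary variable $\s$ interacts with $\p$ via the non-symmetric blocks $\Gammabf_{1,2}, \Gammabf_{2,1}$, whose cross contributions in $\Lcgq \Kc_{l}$ can only be controlled by invoking the fluctuation-dissipation relation \cref{as:gle:q} to tie them to $\Q$; this is where the generality of the extended-variable formalism (beyond, e.g., \cite{Mattingly2002,Ottobre2011}) costs extra work. In addition, because $\mathbf{F}$ need not be conservative, there is no closed-form invariant density on which to piggyback, so the auxiliary potential $V$ from \cref{as:potential:unbounded} must carry the coercivity, and the smallness of $\varepsilon$ must be chosen simultaneously compatibly with (a) the positive-definiteness of the quadratic form defining $H_{0} + \varepsilon\inner{\q,\p}$, (b) the dissipation in $\p$, and (c) the polynomial growth $\Kc_{l} = \Theta(\|\x\|^{2l})$.
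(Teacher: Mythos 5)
Your overall strategy coincides with the paper's: \cref{thm:ergo:unbounded:2} is proved there by combining \cref{prop:gle:minorization:3} (minorization) with \cref{prop:lya:R} (Lyapunov functions), exactly the two ingredients you identify. Your minorization argument --- exact controllability from the invertibility of $\Sigmabf$, hypoellipticity, and a uniform lower bound on compacts --- is precisely the argument of \cref{prop:gle:minorization:3} combined with \cref{lem:HiMaSt}. Your Lyapunov ansatz is also essentially the paper's: \cref{prop:lya:R} uses $\Kc_{l} = \left(\z^{\trans}\C_{A,B}\z + \norm{\q}_{2}^{2} + 2\inner{\p,\q} + BD(V(\q)-u_{\min})+1\right)^{l}$ with $A=0$ in the ${\rm rank}(\Gammabf_{1,1})=n$ case, which after dividing by $B$ is your $H_{0}+\varepsilon\inner{\q,\p}$ with $\varepsilon = 2/B$, except that the paper retains an explicit $\tfrac{\varepsilon}{2}\norm{\q}_{2}^{2}$ term where you rely on the quadratic lower bound on $V$ implied by \cref{as:potential:unbounded} \ref{as:potential:it:2}; that substitution is harmless.

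There is, however, one term missing from your drift computation. Applying $\Lc_{H} = {\bm F}(\q)\cdot\nabla_{\p} + \p\cdot\nabla_{\q}$ to $H_{0}$ itself produces $\inner{{\bm F}(\q)+\nabla_{\q} V(\q),\,\p}$ (in the paper's normalization, $2B\inner{{\bm F}(\q),\p} + BD\inner{\nabla_{\q} V(\q),\p}$). This vanishes only when ${\bm F}=-\nabla U$ and $V$ is taken proportional to $U$; for a genuinely non-conservative ${\bm F}$, \cref{as:potential:unbounded} controls only the radial component $\inner{\q,{\bm F}(\q)}$ and gives no bound on $\inner{{\bm F}(\q),\p}$, which may grow faster than any quadratic in $\x$ and is then not dominated by the negative quadratic form you assemble. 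Your list of contributions to $\Lcgq\Kc_{1}$ omits it entirely, so as written the inequality $\Lcgq\Kc_{l}\leq -\kappa\Kc_{l}+d\mathbf{1}_{K}$ does not follow; you need either an additional growth hypothesis on ${\bm F}+\nabla_{\q} V$ (e.g.\ at most linear growth, so the term can be absorbed by Young's inequality into $-\varepsilon E\norm{\q}_{2}^{2}-c_{1}\norm{\p}_{2}^{2}$), or to account explicitly for how this term enters the matrix $\widehat{\bm R}_{A,B}$ in \cref{prop:lya:R}. A second, more routine omission: $\Lc_{O}$ applied to $\varepsilon\inner{\q,\p}$ produces $-\varepsilon\,\q^{\trans}(\Gammabf_{1,1}\p+\Gammabf_{1,2}\s)$, i.e.\ cross terms between $\q$ and $\z$ that must likewise be absorbed into $-\varepsilon E\norm{\q}_{2}^{2}$ and the $\z$-dissipation; this is exactly what the first-column blocks of $\widehat{\bm R}_{A,B}$ and the Schur-complement argument via \cref{lem:schur:posdef} handle in the paper.
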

\begin{proof}
The validity of a minorization condition follows from \cref{prop:gle:minorization:3}. The existence of a suitable class of Lyapunov functions is shown in \cref{prop:lya:R}.
\qed\end{proof}
The above theorem covers instances of the GLE with a non-degenerated white noise component. In order to derive geometric ergodicity for GLEs without a white noise component, i.e., $\Sigmabf_{1} = \0$ which is implied by $\Gammabf_{1,1}=\0$ (see \cref{lem:purecolor}), we require the force ${\bm F}$ to satisfy the following assumption:
\begin{assumption}\label{as:potential:unbounded:2}
Let the force ${\bm F}$ be such that
\[
{\bm F}(\q) = {\bm F}_{1}(\q) + {\bm F}_{2}(\q),
\]
where  ${\bm F}_{1}\in \mathcal{C}^{\infty}(\RR^{n},\RR^{n})$ is uniformly bounded in $\qDomain$, i.e., 
\[
\sup_{\q \in \qDomain}\norm{{\bm F}_{1}(\q)}_{\infty} < \infty, 
\]
and 
\[
{\bm F}_{2}(\q) ={\bm H} \q,
\]
with  ${\bm H}\in \RR^{n\times n}$ being a positive definite matrix, i.e.,  $\min \sigma({\bm H}) = \lambda_{{\bm H}}>0$. 
\end{assumption}
\begin{remark}\label{rem:on:as:potential:unbounded:2}
\cref{as:potential:unbounded:2} implies that there is $ \overline{H}>0$ and $ \overline{h}\in \RR$ so that
\[
\abs{ \langle {\bm g}, {\bm F}(\q)\rangle } \leq   \overline{H}\abs{\langle {\bm g},\q \rangle} + \overline{h},
\]
for all $\q,{\bm g} \in \RR^{n}$. Moreover, if both \cref{as:potential:unbounded:2}  and \cref{as:potential:unbounded} hold, then it is easy to see that the potential function $V$ in \cref{as:potential:unbounded} is of the form of a perturbed quadratic potential function in the following sense:
\[
V(\q) = V_{1}(\q) + V_{2}(\q),
\]
where  $V_{1}\in \mathcal{C}^{\infty}(\RR^{n},\RR)$ has bounded derivatives and 
\[
V_{2}(\q) =\frac{1}{2}\q^{\trans} {\bm H} \q.
\]
\end{remark}
The following theorem provides a sufficient condition for geometric ergodicity of \cref{eq:gle:q} for constant coefficients  and $\Gammabf_{1,1} = \0$.
\begin{theorem}\label{thm:ergo:unbounded:3} 
Let $\qDomain =\RR^{n}$, ${\bm F}$ satisfies \cref{as:potential:unbounded} and \cref{as:potential:unbounded:2}, 
and $\Gammaq, \Sigmaq$ 
as specified above with $\Gammabf_{1,1}=\0$. There exists a unique probability measure $\mu(\dd \x)$ such that for any $l\in \mathbb{N}$ there exists $\Kc_{l} \in \mathcal{C}^{\infty}(\RR^{2n+m}, [0,\infty))$ with 
\[
\Kc_{l}(\x) = \bT(\norm{\x}^{2l}), ~ \text{ as } \norm{\x} \rightarrow \infty, 
\]
such that \eqref{eq:geo:conv:gle} and \eqref{eq:bound:moments} hold for $\Kc=\Kc_{l}$. In particular, if ${\bm F} = -\nabla U$, then $\mu = \mu_{\Q,\beta}$.
\end{theorem}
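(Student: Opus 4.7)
The plan is to follow the same two-step strategy as in \cref{thm:ergo:bounded:1,thm:ergo:unbounded:2}: verify the minorization condition (\cref{as:minorization:inf}) and construct a class of Lyapunov functions satisfying \cref{as:lyapunov:inf}. Observe first that $\Gammabf_{1,1}=\0$ forces $\Sigmabf_{1}=\0$ by \cref{lem:purecolor}, so that all noise enters through the $\s$--equation and reaches $\p$ only indirectly via the coupling block $\Gammabf_{1,2}$.

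For the minorization I would proceed exactly as in \cref{prop:gle:minorization:3}. Since the parabolic H\"ormander condition is assumed to hold both with and without the force term, H\"ormander's theorem yields a smooth transition density for $e^{t\Lcgq}$, and the remaining topological irreducibility on compact sets follows from approximate controllability of the deterministic control system obtained by replacing $\W$ with a smooth control, combined with a Girsanov argument. The degeneracy $\Gammabf_{1,1}=\0$ introduces no additional difficulty here, since the H\"ormander bracket condition already guarantees that noise in the $\s$--component propagates to all directions of phase space.

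Construction of the Lyapunov function is the more delicate step. Following the ansatz of \cite{Mattingly2002,Ottobre2011} for the underdamped Langevin case, I would seek $\Kc_{l}$ of the form
\[
\Kc_{l}(\x) = \Big(1 + V(\q) + \tfrac{1}{2}\norm{\p}_{2}^{2} + \tfrac{1}{2}\s^{\trans}\Q^{-1}\s + \inner{\x,\Psibf\x}\Big)^{l},
\]
with $V$ the auxiliary potential from \cref{as:potential:unbounded}, $\Q$ an appropriate positive definite matrix (that of \cref{as:gle:q} in the conservative case), and $\Psibf$ a small symmetric matrix introducing cross terms of the form $\varepsilon\inner{\q,\p}$ and $\delta\inner{\p,\s}$. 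In the absence of direct friction on $\p$, the cross term $\varepsilon\inner{\q,\p}$ is essential: its $\Lc_{H}$--contribution produces the strictly dissipative combination $-\varepsilon\norm{\p}_{2}^{2}-\varepsilon\inner{\q,\nabla V(\q)}$, from which the needed negative definite $\p$--term and control of $V(\q)+\norm{\q}_{2}^{2}$ are extracted via the coercivity bound in \cref{as:potential:unbounded}. The term $\delta\inner{\p,\s}$ converts the direct dissipation of $\s$ coming from $\Gammabf_{2,2}$ into dissipation of $\p$ through the coupling $\Gammabf_{1,2}$. The bounded nonlinearity ${\bm F}_{1}$ together with the bounded remainder $V_{1}$ from \cref{rem:on:as:potential:unbounded:2} generate only lower-order contributions, which can be absorbed into the additive constant of the drift inequality $\Lcgq \Kc_{l} \leq -c\Kc_{l}+C$.

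The main obstacle will be showing that $\Psibf$ can indeed be chosen so that the resulting quadratic form is strictly negative definite modulo lower-order terms. Unlike \cref{thm:ergo:unbounded:2}, where direct friction on $\p$ provides dissipation without any matrix tuning, here the requisite negative definite form in $(\q,\p,\s)$ must be synthesized entirely from off-diagonal couplings. Replacing ${\bm F}$ by its linear part ${\bm H}\q$ --- which controls the asymptotics by \cref{as:potential:unbounded:2} and \cref{rem:on:as:potential:unbounded:2} --- reduces the problem to solving a matrix Lyapunov equation for the augmented linear drift with blocks $-{\bm H}$, $-\Gammabf_{1,2}$, $-\Gammabf_{2,1}$, $-\Gammabf_{2,2}$. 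Positivity of ${\bm H}$ together with stability of $\Gammabf$ is precisely what guarantees the existence of a positive definite solution, and hence a valid choice of $\Psibf$ for $\varepsilon,\delta$ sufficiently small.
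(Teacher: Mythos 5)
Your Lyapunov construction is essentially the paper's own (cf.\ \cref{prop:lya:R}): the function used there is
$\Kc_{l}(\x)=\bigl(\z^{\trans}\C_{A,B}\z+\norm{\q}_{2}^{2}+2\inner{\p,\q}+BD(V(\q)-u_{\min})+1\bigr)^{l}$ with a $\p$--$\s$ cross block $A\,\Gammabf_{2,1}$, and the dissipation in $\p$ is indeed synthesized from $-A\,\p^{\trans}\Gammabf_{2,1}^{\trans}\Gammabf_{2,1}\p$ together with the coercivity of $V$ fed in through the $\inner{\q,\p}$ term; the positive definiteness of the resulting matrix is checked by Schur complements rather than by solving a Lyapunov equation, but that is a cosmetic difference. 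One point you gloss over: the $\p$--$\s$ cross term also produces the mixed contribution $-A\,\nabla_{\q}V(\q)^{\trans}\Gammabf_{2,1}^{\trans}\s$, and it is precisely \cref{as:potential:unbounded:2} (via \cref{rem:on:as:potential:unbounded:2}) that lets one dominate this by a quadratic form in $(\q,\s)$; it is not merely a ``lower-order contribution absorbed into the constant.''

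The genuine gap is in the minorization. You propose to ``proceed exactly as in \cref{prop:gle:minorization:3},'' but that lemma hinges on ${\rm rank}(\Sigmabf)=n+m$: the control is constructed by inverting $\Sigmabf$. Here $\Gammabf_{1,1}=\0$ forces $\Sigmabf_{1}=\0$ by \cref{lem:purecolor}, so $\Sigmabf$ is singular, the noise enters only the $\s$--equation, and the exact control problem cannot be solved by inversion; your assertion that the degeneracy ``introduces no additional difficulty'' is exactly where the paper has to work hardest. The paper's actual route (\cref{prop:minorization:R-2}) is different in kind: it uses \cref{as:potential:unbounded:2} to split ${\bm F}={\bm F}_{1}+{\bm H}\q$, compares \cref{eq:gle:q} with the linear Gaussian system obtained by dropping the bounded part ${\bm F}_{1}$, deduces full support of the Gaussian law from non-degeneracy of its covariance (hypoellipticity via \cref{prop:cond:hyp}~(iii)), and then transfers full support back by Girsanov, verifying Novikov's condition with the exponential Lyapunov function of \cref{lem:lya:exp:gle:nonconst:coeff:2}. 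Your sketch mentions Girsanov and H\"ormander, but H\"ormander gives only smoothness of the transition density, not irreducibility on the unbounded domain $\RR^{2n+m}$; without the explicit linear/bounded decomposition of the drift and the Novikov verification, the minorization step is not established.
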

\begin{proof}
The validity of the minorization condition follows from \cref{prop:minorization:R-2}. The existence of a suitable class of Lyapunov functions is shown in \cref{prop:lya:R}.
\qed\end{proof}
\subsubsection{Results for non-stationary noise}
For the case of a periodic configurational domain $\qDomain = \mathbb{T}^{n}$ we show geometric ergodicity for the SDE \cref{eq:gle:q} for the general case where $\Gammaq$ and $\Sigmaq$ may not be constant. We focus on the case 
\[
\Gammaq(\cdot) = 
\begin{pmatrix}
\0 &\Gammaq_{1,2}(\cdot)\\
\Gammaq_{2,1}(\cdot) & \Gammaq_{2,2}(\cdot)
\end{pmatrix} \in \mathcal{C}^{\infty}(\qDomain,\RR^{2n\times 2n}).
\]
where all non-vanishing sub-blocks are assumed to be invertible, i.e.,
 \[
 \Gammaq_{1,2}(\q),\Gammaq_{2,1}(\q), \Gammaq_{2,2}(\q),\Sigmaq_{2,2}(\q) \in {\rm GL}_{n}(\RR), 
 \]
 for all $\q\in \qDomain$, where by ${\rm GL}_{n}(\RR)\subset \RR^{n\times n}$ we denote the set of all invertible $n\times n$-matrices with real valued coefficients.
Furthermore, we assume that $-\Gammaq(\q)$ is a stable matrix for all $\q\in \qDomain$ and that $\Gammaq,\Sigmaq$ are such that \cref{as:gle:q} is satisfied, i.e., since $\Gammaq_{1,1} \equiv \0$, it follows by \cref{lem:purecolor} that
\begin{equation}\label{eq:pure:noise:general}
\forall \q \in \qDomain,~\Gammaq_{1,2}(\q)= -\Q \Gammaq_{2,1}(\q),
\end{equation}
holds. Moreover we assume 
\begin{equation}\label{as:C:matrix}
\exists \, \C \in \RR^{(n+m)\times (n+m)}~ \text{s.p.d.},~ \forall\, \q \in \qDomain : ~\Gammaq(\q)\C + \C\Gammaq^{\trans}(\q) ~\text{s.p.d.},
\end{equation}
where the notation ``s.p.d.'' stands for ``symmetric positive definite.''
We expect that our result can be easily extended to more general forms of $\Gammaq$, i.e., to the case where $\Gammaq(\q) \in \RR^{m\times m}$ with $m\neq n$; see note \ref{rem:kernel:generalization} at the end of this subsection. We also point out that the case $\Gammaq_{1,1}\neq \0$ would not cause any additional difficulties in the proof of the result as long as the identity \cref{eq:pure:noise:general} holds. (See e.g. \cite{sachs2017langevin} for ergodicity results for under-damped Langevin equation with non-constant coefficients.) 


\begin{theorem}\label{thm:ergo:GLEq}
Let $\qDomain=\mathbb{T}^{n}$. Under the assumptions on $\Gammaq$ and $\Sigmaq$ described in the preceding paragraph, there is a unique invariant measure $\mu$ such that there exists for any $l\in \mathbb{N}$ a function $\Kc_{l} \in \mathcal{C}^{\infty}(\mathbb{T}^{n} \times \RR^{2n}, [1,\infty))$ with 
\[
\Kc_{l}(\q,\p,\s) = \bT(\norm{\z}^{2l}), ~ \text{ as } \norm{\z} \rightarrow \infty, ~ \z= \begin{pmatrix}\p \\ \s \end{pmatrix},
\]
such that \eqref{eq:geo:conv:gle} and \eqref{eq:bound:moments} hold for $\Kc=\Kc_{l}$.  In particular, if  ${\bm F} = -\nabla U$, then $\mu = \mu_{\Q,\beta}$.
\end{theorem}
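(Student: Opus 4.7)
The plan is to apply the Harris-type framework summarized in \cref{sec:stochastic:analysis}: once a Lyapunov function $\Kc_l$ satisfying the drift condition \cref{as:lyapunov:inf} is exhibited together with a minorization condition \cref{as:minorization:inf} on every sublevel set of $\Kc_l$, the geometric convergence \eqref{eq:geo:conv:gle} and the moment bound \eqref{eq:bound:moments} follow immediately, and uniqueness of the invariant measure $\mu$ is part of this conclusion. When ${\bm F} = -\nabla U$, \cref{prop:GLE:cons:q} shows that $\mu_{\Q,\beta}$ is invariant, so necessarily $\mu = \mu_{\Q,\beta}$.

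\textbf{Lyapunov function.} Since $\qDomain = \mathbb{T}^n$ is compact, the Lyapunov function only needs to control growth in $\z = (\p^{\trans},\s^{\trans})^{\trans}$. I would set
\[
\Kc_l(\x) \;=\; \bigl(1 + \tfrac{1}{2}\, \z^{\trans}\C^{-1}\z\bigr)^l,
\]
with $\C$ as in \cref{as:C:matrix}. The identity $\C^{-1}\Gammaq + \Gammaq^{\trans}\C^{-1} = \C^{-1}(\Gammaq\C + \C\Gammaq^{\trans})\C^{-1}$ combined with \cref{as:C:matrix} and continuity over the compact torus provides a uniform-in-$\q$ lower bound on the smallest eigenvalue of $\C^{-1}\Gammaq(\q) + \Gammaq^{\trans}(\q)\C^{-1}$, so
\[
\Lc_O\bigl(\z^{\trans}\C^{-1}\z\bigr) \;\leq\; -c_0\, \|\z\|^{2} + C_0
\]
for some $c_0, C_0 > 0$. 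The Hamiltonian part $\Lc_H$ produces additional terms linear in $\z$ whose coefficients involve ${\bm F}(\q)$ but are uniformly bounded on $\mathbb{T}^n$; Young's inequality absorbs these into the dissipative term, giving $\Lcgq \Kc_l \leq -c\,\Kc_l + K$ outside a compact set. The argument is modeled on the ansatz of \cite{Mattingly2002,Ottobre2011}, with the matrix Lyapunov inequality \cref{as:C:matrix} replacing the scalar friction bound used there.

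\textbf{Minorization via Girsanov.} Since $\Gammaq_{1,1}\equiv\0$, white noise enters only through the $\s$-equation, but the invertibility of $\Gammaq_{1,2}(\q)$, $\Gammaq_{2,1}(\q)$, $\Gammaq_{2,2}(\q)$ and $\Sigmaq_{2,2}(\q)$ at every $\q$ propagates it through the chain $\W\to\s\to\p\to\q$, so the parabolic H\"ormander condition (\cref{subsection:hypo:criteria}) holds uniformly and the transition density is smooth and strictly positive. To extract a quantitative uniform-on-compacts lower bound in spite of the $\q$-dependence of the coefficients, I would perform a Girsanov change of measure against a reference linear SDE obtained by freezing $\Gammaq(\q)$ and $\Sigmaq(\q)$ at a base point $\q_{\ast}\in\mathbb{T}^n$: the reference process is Ornstein--Uhlenbeck-type Gaussian and admits an explicit lower bound on its transition density. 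Compactness of $\mathbb{T}^n$, together with the uniform invertibility of the noise blocks, guarantees that the inverse coefficients entering the Girsanov drift are uniformly bounded, the Novikov condition is satisfied, and the exponential martingale admits controllable two-sided bounds on a suitable event of positive probability. Combining these two ingredients yields the minorizing sub-probability measure required by \cref{as:minorization:inf}.

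\textbf{Main obstacle.} The genuine difficulty is the minorization step: for position-dependent coefficients, the standard controllability-plus-H\"ormander argument delivers only qualitative positivity of the transition density, whereas the Harris framework needs a quantitative lower bound uniform over sublevel sets of $\Kc_l$. Compactness of $\qDomain = \mathbb{T}^n$ is indispensable to the Girsanov strategy, since it is precisely what yields the uniform invertibility bounds on the $\Gammaq$- and $\Sigmaq$-blocks needed to control the exponential martingale; the absence of such uniform bounds on $\RR^n$ is the reason why \cref{thm:ergo:unbounded:2,thm:ergo:unbounded:3} must restrict to constant coefficients.
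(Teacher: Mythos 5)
Your Lyapunov construction is essentially the paper's own: \cref{prop:lya:torus-2} takes $\Kc_l = \left(\z^{\trans}\C\z + U(\q)-U_{\min}+1\right)^l$ with $\C$ from \eqref{as:C:matrix}, and on the compact torus dropping the bounded potential term and working with $\C^{-1}$ instead of $\C$ changes nothing of substance. The gap is in the minorization step.

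Girsanov's theorem (\cref{thm:girsanov:1}) removes only a difference of \emph{drifts}, and only one lying in the range of the \emph{common} diffusion coefficient; it cannot change the diffusion coefficient itself. Freezing $\Sigmaq(\q)$ at a base point $\q_{\ast}$ alters the quadratic variation of the $\s$-component, so the path measure of your frozen Ornstein--Uhlenbeck reference is in general mutually singular with that of \cref{eq:gle:q}, not equivalent to it. Moreover, even if you kept $\Sigmaq_2(\q)$ and froze only the drift, the position-dependent term $\Gammaq_{1,2}(\q)\s$ sits in the $\p$-equation, which carries no noise here (recall $\Gammaq_{1,1}\equiv\0$, $\Sigmaq_1\equiv\0$); the drift difference $\bigl(\Gammaq_{1,2}(\q)-\Gammaq_{1,2}(\q_{\ast})\bigr)\s$ is therefore not of the form $\diffusion\,{\bm u}$ and cannot be absorbed into a Girsanov exponential at all, regardless of Novikov.

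The paper's proof of \cref{thm:erg:gle:nonconst:coeff} is built precisely to get around these two obstructions: the change of variables ${\bm g}=\Gammaq_{1,2}(\q)\s$ first moves all position dependence out of the noise-free $\p$-equation into the ${\bm g}$-equation, where the noise lives; Girsanov (with Novikov's condition verified through the exponential Lyapunov function of \cref{lem:lya:exp:gle:nonconst:coeff}) then removes the position-dependent \emph{drift} of the ${\bm g}$-equation; and the remaining comparison between the position-dependent diffusion coefficient $\Gammaq_{1,2}(\q)\Sigmaq_2(\q)$ and a constant one is made not by Girsanov but by the Stroock--Varadhan support theorem, using that this matrix is everywhere invertible so the associated control problems coincide. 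Your proposal needs a substitute for both devices; as written, the reduction to a frozen-coefficient Gaussian reference process does not go through. The final quantitative step (passing from full support and joint continuity of the transition density to a minorizing measure by taking $\rho(x)=\min_{\x_0\in C_r}\rho_{\x_0,t}(x)$) is the same in both arguments and is fine.
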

\begin{proof}
The validity of the minorization condition follows from \cref{thm:erg:gle:nonconst:coeff}. The existence of a suitable class of Lyapunov functions is shown in \cref{prop:lya:torus-2}.
\qed\end{proof}
We provide a simple example of an instance of \cref{eq:gle:q}, which satisfies the condition of \cref{thm:ergo:GLEq}: 
\begin{example}\label{exa:not:empty}
 Let  $m=n=1$ and let $\qDomain=\mathbb{T}$.  Consider the matrix-valued functions $ \Gammaq,\Sigmaq$ defined by 
 \[
 \Gammaq(\q) = \begin{pmatrix} 0 & -(2+\cos(2\pi \q))  \\ (2+\cos(2\pi \q)) & 1\end{pmatrix},~  \Sigmaq(\q) = \begin{pmatrix} 0 & 0 \\ 0 & 1\end{pmatrix}.
\]
Obviously, a valid choice for $\Q$ in \cref{prop:GLE:cons:q} is
\[
\Q = \begin{pmatrix} 0 & 0 \\ 0 & 1\end{pmatrix}.
\]
Moreover, 
\[
\C = \begin{pmatrix} 19/18 & -(1/6) \\ -(1/6) & 1\end{pmatrix}.
\]
satisfies \eqref{as:C:matrix}. This follows by virtue of \cref{lem:schur:posdef}. We provide a plot of the eigenvalues of the matrix 
\begin{equation}\label{eq:ex:R:matrix}
{\bm R}(\q) = \Gammaq(\q) \C+ \C  \Gammaq^{\trans}(\q),
\end{equation}
as a function of $\q$ in \cref{plot:eigenvalues}.
\begin{figure}[ht]
\begin{centering}
\hspace{-.5cm}
\includegraphics[width=.5\textwidth]{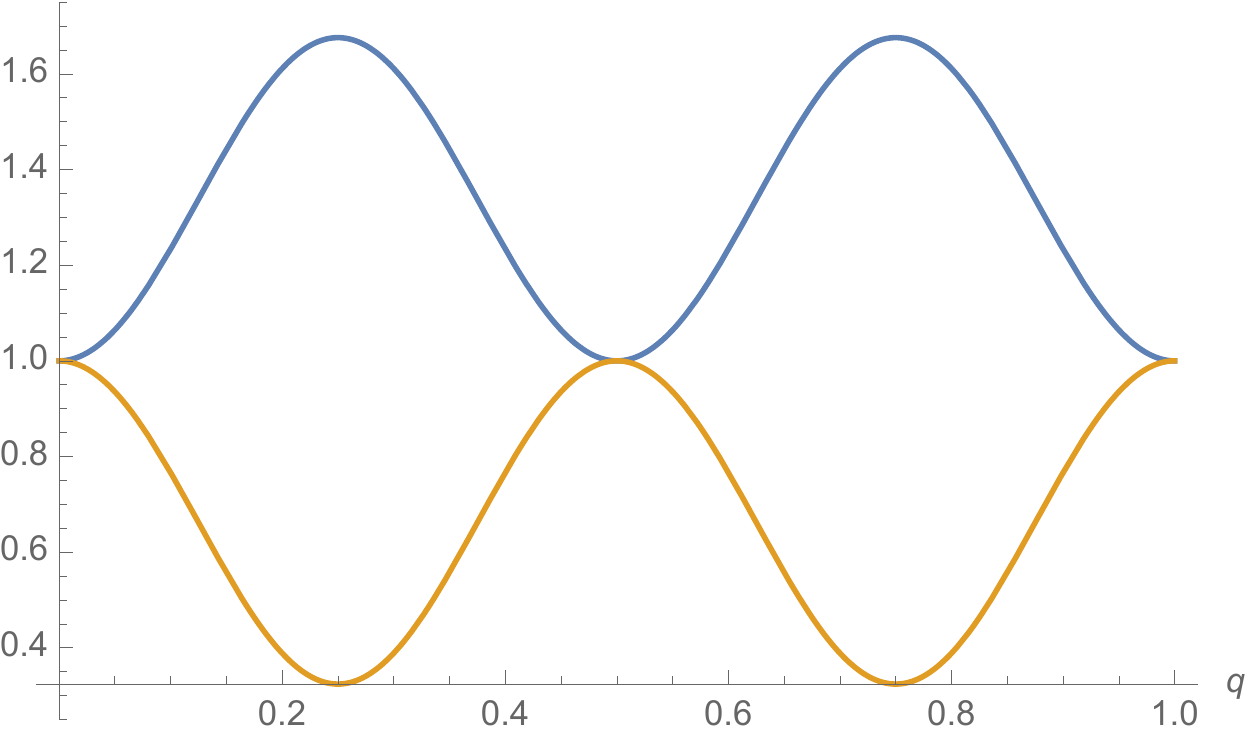}
\caption[]{$\q$ vs. the eigenvalues of the matrix ${\bm R}(\q)$ which is defined in \cref{eq:ex:R:matrix}. 
}\label{plot:eigenvalues}
\end{centering}
\end{figure}
\FloatBarrier
\end{example}
\subsubsection{Central limit theorem for quasi-Markovian GLE dynamics.}
A direct consequence of the geometric ergodicity of the dynamics \cref{eq:gle:q} is the validity of a central limit theorem for certain observables. This result is of practical importance as it justifies the use of GLE dynamics for sampling purposes as, e.g., in \cite{Ceriotti2009,Ceriotti2010,Wu2015}.

Define the projection operator 
\[
\Pi \varphi = \varphi -\EE_{\mu}\varphi,
\]
and let $L_{\Kc,0}^{\infty} := \Pi L_{\Kc}^{\infty} \subset L_{\Kc}^{\infty}$, be the subspace of $L_{\Kc}^{\infty}$ which is comprised of observables with vanishing mean. Denote by $\norm{\cdot}_{\opnorm(L_{\Kc}^{\infty})}$ the operator norm 
\[
\norm{A }_{\opnorm(L_{\Kc}^{\infty})} := \sup_{\varphi \in L_{\Kc}^{\infty}}\frac{ \norm{A \varphi }_{L_{\Kc}^{\infty}}}{\norm{\varphi }_{L_{\Kc}^{\infty}}}.
\]
induced by the norm $\norm{\cdot }_{L_{\Kc}^{\infty}}$ for operators $A : L_{\Kc}^{\infty} \rightarrow L_{\Kc}^{\infty}$. The validity of \cref{eq:geo:conv:gle} for all $t\geq0$ immediately implies the inequality
\begin{equation}
\norm{\Pi e^{t\Lcgq}}_{\opnorm(L_{\Kc}^{\infty})} \leq C e^{t\kappa}.
\end{equation}
By \cite[Proposition 2.1]{Lelievre2016a}, $\Lcgq$ considered as an operator on $ L_{\Kc,0}^{\infty}$ is invertible with bounded spectrum. By \cite{Bhattacharya1982} this implies a central limit theorem for observables contained in $\varphi \in L_{\Kc}^{\infty}$ as summarized in the following \cref{cor:CLT}.

\begin{corollary}\label{cor:CLT}
Let the conditions of one of the \cref{thm:ergo:bounded:1,thm:ergo:unbounded:2,thm:ergo:unbounded:3,thm:ergo:GLEq} be satisfied and let $\Kc_{l}$ for $l\in \NN$ be a suitable Lyapunov function as specified therein. The spectrum of $\Lcgq^{-1}\Pi$ is bounded in $\norm{\cdot}_{\opnorm(L_{\Kc_{l}}^{\infty})}$, i.e.,
\begin{equation}
\norm{\Lcgq^{-1}\Pi }_{\opnorm(L_{\Kc_{l}}^{\infty})} \leq \frac{C_{l}}{\kappa_{l}},
\end{equation}
where $C_{l},\kappa_{l}>0$ are such that \cref{eq:geo:conv:gle} holds for $\Kc=\Kc_{l}, \kappa=\kappa_{l}, C = C_{l}$. In particular, a central limit theorem  holds for the solution of \cref{eq:gle:q}, i.e., 
\begin{equation}
T^{-1/2} \int_{0}^{T} \left [ \EE_{\mu}\varphi - \varphi(\x(t))\right ] \dd t \sim \mathcal{N}(0,\sigma_{\varphi}^{2}), \text{ as } T\rightarrow \infty,
\end{equation}
for any $\varphi \in L_{\Kc_{l}}^{\infty}$, where $\mu$ denotes the unique invariant measure of $\x$ and 
\[
\sigma_{\varphi}^{2} = -2 \int \left ( \Lcgq^{-1} \Pi \varphi(\x) \right) \Pi\varphi(\x)\mu(\dd \x).
\]
\end{corollary}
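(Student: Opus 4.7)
The plan is to turn the geometric convergence statement \cref{eq:geo:conv:gle} into a resolvent bound and then feed this bound into a standard Markovian central limit theorem. Throughout, fix $l \in \NN$ and let $\Kc_{l}$, $C_{l}$, $\kappa_{l}$ be as in whichever of \cref{thm:ergo:bounded:1,thm:ergo:unbounded:2,thm:ergo:unbounded:3,thm:ergo:GLEq} applies. As already noted in the paragraph preceding the corollary, \cref{eq:geo:conv:gle} translates verbatim into the operator bound
\[
\norm{\Pi e^{t\Lcgq}}_{\opnorm(L_{\Kc_{l}}^{\infty})} \leq C_{l} e^{-\kappa_{l} t}, \qquad t\geq 0,
\]
since $\Pi \varphi \in L_{\Kc_l,0}^\infty$ for every $\varphi \in L_{\Kc_l}^\infty$, and $e^{t\Lcgq}$ commutes with $\Pi$ (because $\mu$ is invariant, so $\EE_\mu e^{t\Lcgq}\varphi = \EE_\mu \varphi$).

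The first step is to express the pseudo-inverse by the absolutely convergent Bochner integral
\[
\Lcgq^{-1} \Pi \varphi \;=\; -\int_{0}^{\infty} e^{t\Lcgq} \Pi \varphi \,\dd t, \qquad \varphi \in L_{\Kc_{l}}^{\infty}.
\]
The triangle inequality applied to this integral, combined with the exponential decay above, immediately yields
\[
\norm{\Lcgq^{-1}\Pi\, \varphi}_{L_{\Kc_{l}}^{\infty}} \leq \int_{0}^{\infty} C_{l} e^{-\kappa_{l} t} \, \dd t\,\norm{\varphi}_{L_{\Kc_{l}}^{\infty}} \;=\; \frac{C_{l}}{\kappa_{l}}\norm{\varphi}_{L_{\Kc_{l}}^{\infty}},
\]
which is the desired operator norm bound. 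That the right-hand side genuinely defines a two-sided inverse of $\Lcgq$ on $L_{\Kc_l,0}^\infty$ follows from \cite[Proposition 2.1]{Lelievre2016a} as cited in the excerpt: differentiating under the integral sign gives $\Lcgq \Lcgq^{-1}\Pi \varphi = -\Pi\varphi$, and the geometric ergodicity ensures the boundary term at $\infty$ vanishes.

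For the central limit theorem, set $\psi_\varphi := \Lcgq^{-1}\Pi\varphi$, which solves the Poisson equation $-\Lcgq \psi_\varphi = \Pi\varphi$ and belongs to $L_{\Kc_l}^\infty$ by the previous step. The strategy is then the classical one: apply It\^o's formula to $\psi_\varphi(\x(t))$ and integrate in time to obtain
\[
\int_{0}^{T} \Pi\varphi(\x(t)) \, \dd t \;=\; \psi_\varphi(\x(0)) - \psi_\varphi(\x(T)) + M_T,
\]
where $M_T$ is a martingale whose quadratic variation at time $T$ equals $\int_0^T (\Gammabf\, \psi_\varphi)(\x(t))\,\dd t$ for the carr\'e-du-champ $\Gammabf \psi := \Lcgq \psi^2 - 2\psi \Lcgq \psi$. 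The boundary terms divided by $\sqrt{T}$ tend to zero in probability because $\psi_\varphi \in L_{\Kc_l}^\infty$ and $\EE_\mu \Kc_l < \infty$ by \cref{eq:bound:moments}, while ergodicity of $\x$ ensures $T^{-1}\langle M\rangle_T$ converges $\mu$-a.s. to $\EE_\mu \Gammabf \psi_\varphi = -2\EE_\mu \psi_\varphi \Lcgq \psi_\varphi = -2\int (\Lcgq^{-1}\Pi\varphi)\, \Pi\varphi \, \dd\mu$. The martingale CLT, in the form used in \cite{Bhattacharya1982}, then yields the stated Gaussian limit with precisely the asymptotic variance $\sigma_\varphi^2$ written in the corollary.

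The main obstacle, from a technical standpoint, is not any of the individual steps but rather the verification that the hypotheses of \cite{Bhattacharya1982} (or an equivalent result) are met within our weighted $L^\infty$ framework; however, once the resolvent bound above is in hand, this is standard and all the nontrivial input has already been established in \cref{thm:ergo:bounded:1,thm:ergo:unbounded:2,thm:ergo:unbounded:3,thm:ergo:GLEq}. \qed
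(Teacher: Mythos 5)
Your argument is correct and follows essentially the same route as the paper, which simply delegates the resolvent bound to \cite[Proposition 2.1]{Lelievre2016a} and the CLT to \cite{Bhattacharya1982}; your Bochner-integral representation of $\Lcgq^{-1}\Pi$ and the Poisson-equation/martingale decomposition are precisely the standard arguments behind those two citations. The only cosmetic caveat is that your carr\'e-du-champ symbol collides with the paper's use of $\Gammabf$ for the friction matrix, and one should note that applying \cite{Bhattacharya1982} in $L^{2}(\mu)$ uses the embedding $L^{\infty}_{\Kc_{l}}\subset L^{2}(\mu)$, which holds since \cref{eq:bound:moments} is available for every index $l$.
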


\subsubsection{Notes on \cref{thm:ergo:bounded:1,thm:ergo:unbounded:2,thm:ergo:unbounded:3,thm:ergo:GLEq}:}\label{sec:notes}
\begin{enumerate}[label=\textbf{N.\arabic*}]
\item \cref{thm:ergo:bounded:1,thm:ergo:unbounded:2,thm:ergo:unbounded:3,thm:ergo:GLEq} imply path-wise ergodicity in the sense that
\begin{equation}
\lim_{T \rightarrow \infty} \frac{1}{T} \int_{0}^{T}\varphi(\x(t)) \dd t = \EE_{\mu}\varphi, 
\end{equation}
almost surely for $\mu$-almost all initializations of $\x(0)$ an almost all realizations of the Wiener process $\W$. We note that in the case that ${\bm F} = -\nabla U$ and \cref{as:gle:q} is satisfied it is sufficient to show that the generator $\Lcgq$ is hypoelliptic in order to conclude uniqueness of the invariant measure and path-wise ergodicity in the above sense. This follows directly from the arguments in \cite{kliemann1987recurrence} as in this case the form of the invariant measure is known and has a smooth positive density.\\
\item
The Lyapunov-based techniques  on which the proofs of our ergodicity results rely have been studied in the context of stochastic differential equations (see \cite{meyn1993stability,Talay2002,Mattingly2002,Rey-Bellet2006a}) as well as in the context of discrete time Markov chains  (see e.g. \cite{harris1956existence,meyn1992stability,Meyn1997,Hairer2011a}). In particular, we mention the application of these techniques to prove geometric ergodicity of solutions of the under-damped Langevin equation in \cite{Talay2002,Mattingly2002,Rey-Bellet2006a}. As discussed in \cref{subsec:generator},
 the structure of the SDE \cref{eq:gle:q} resembles the structure of the under-damped Langevin equation and it is therefore not surprising that also
the structure of the Lyapunov functions constructed in the proofs of \cite{Mattingly2002} resemble the structure of the Lyapunov functions presented in the latter two references. \\
\item In \cite{Ottobre2011} the authors construct a Lyapunov function for a Markovian reformulation of the GLE with conservative force which in the representation  \cref{eq:gle:q} corresponds to the case where $\Gammaq,\Sigmaq$ are constant with $\Gammaq \equiv \Gammabf$ such that $\Gammabf_{1,1}=\0$ and $\Gammabf_{1,2}, \Gammabf_{2,1},\Gammabf_{2,2}\in \mathbb{R}^{n\times n}$ are diagonal matrices. In the same article  exponential convergence of the law to a unique invariant distribution $\mu$ in relative entropy is shown and exponential decay estimates for the semi-group operator $e^{t\Lcgq}$ in weighted Sobolev space $H^{1}(\mu)$ are derived using the hypocoercivity framework by Villani (see \cite{Villani2009}). \\
\item Ergodic properties of non-equilibrium systems which have a similar structure as the QGLE models considered here have been studied in a series of papers \cite{Eckmann1998,Eckmann1999a,Eckmann1999,Rey-Bellet2002,rey2003statistical}.  These systems consist of a chain of a finite number of oscillators whose ends are coupled to two different heat baths. In a simplified version these systems can be written in the form
\begin{equation}\label{eq:NEchain}
\begin{aligned}
\dot{\bm r}_{1} &= - \gamma_{1} {\bm r}_{1} + \lambda_{1} \p_{1} + \sqrt{2 \beta^{-1} \gamma_{1}} \dot{W}_{1},\\
\dot{\q}_{1} &= \p_{1},\\
\dot{\p}_{1} &= - \partial_{\q_{1}} U(\q) - \lambda_{1} {\bm r}_{1},\\
\dot{\q}_{i} &= \p_{i}, &i=2,3,\dots, n-1,\\
\dot{\p}_{i} &= -\partial_{\q_{i}}U(\q),~&i=2,3,\dots, n-1,\\
\dot{\q}_{n} &= \p_{n},\\
\dot{\p}_{n} &= - \partial_{\q_{n}} U(\q) - \lambda_{2} {\bm r}_{2},\\
\dot{\bm r}_{2} &= - \gamma_{2} {\bm r}_{2} + \lambda_{2} \p_{n} + \sqrt{2 \beta^{-1} \gamma_{2}} \dot{W}_{2},
\end{aligned}
\end{equation}
where
\[
U(\q) = U_{1}(\q_{1}) + U_{n}(\q_{n}) + \sum_{i=2}^{n}\tilde{U}(\q_{i} - \q_{i-1}),
\]
with $U_{1},U_{2},\tilde{U} \in \mathcal{C}^{\infty}(\RR,\RR)$, $\gamma_{i}>0,\lambda_{i}>0$ for $i=1,2$, and $W_{1},W_{2}$ are two independent Wiener processes taking values in $\RR$.   
Under certain conditions on the potential functions $U_{1},U_{n}$ and $\tilde{U}$, the existence of an invariant measure (stationary non-equilibrium state) has been shown in \cite{Eckmann1998}. Uniqueness conditions were derived in \cite{Eckmann1999a,Eckmann1999}, and exponential convergence to the invariant state was shown in  \cite{Rey-Bellet2002} (see also the review paper \cite{Rey-Bellet2006b} and \cite{carmona2007existence}). In the latter reference slightly more general heat bath models are considered than above in \cref{eq:NEchain}). Exponential convergence towards a unique invariant measure is proven in \cite{Rey-Bellet2002}  by showing the existence of a suitable Lyapunov function and by showing hypoellipticity and controllability in the sense of \cref{as:control}. The construction of a suitable control in the proof provided therein relies on $\tilde{U}$ being strictly convex. We expect that the techniques which are used in \cite{Rey-Bellet2002} to prove the existence of a suitable Lyapunov function and the controllability of the SDE can be extended/modified to prove geometric ergodicity for a wide range of GLEs which can be represented in the form \cref{eq:gle:q} with constant coefficients. In fact it has been demonstrated in \cite{Rey-Bellet2006b} that controllability  in the sense of \cref{as:control}  of a system consisting of a chain of oscillators which are coupled to a single heat bath, can be proven by the same techniques as used in \cite{Rey-Bellet2002}.\\
\item \label{rem:kernel:generalization}
We expect that \cref{thm:ergo:GLEq} can be generalized to cover instances of \eqref{eq:gle:q}, where $\Gammaq$ is of a form such that in the non-Markovian reformulation \cref{eq:gle:q:nonmark} the memory kernel is of the form
\[
\K_{\Gammaq}(\q,t) =  \Gammaq_{1,1}(\q)\delta(t) - \sum_{i=1}^{K} \Gammaq_{1,2}^{(i)}(\q) e^{-t\Gammabf_{2,2}^{(i)}}\Gammaq_{2,1}^{(i)}(\q), ~K \in \mathbb{N},
\]
where each $\Gammaq^{(i)}$,
\[
\Gammaq^{(i)}(\q)=\begin{pmatrix}
\0 &\Gammaq_{1,2}^{(i)}(\q)\\
\Gammaq_{2,1}^{(i)}(\q) & \Gammabf_{2,2}^{(i)}
\end{pmatrix}
\]
satisfies the same conditions as $\Gammaq$ in \cref{thm:ergo:GLEq}. 
\end{enumerate}
\subsection{Conditions for hypoellipticity}\label{subsection:hypo:criteria}
Consider the case of constant coefficients in \cref{eq:gle:q}, i.e., $\Gammaq \equiv \Gammabf, \Sigmaq \equiv \Sigmabf$. In this subsection we provide criteria in the form of algebraic conditions on $\Gammabf$ and $\Sigmabf$ which ensure that \cref{eq:gle:q} satisfies the parabolic H\"ormander condition, which by \cref{thm:hormander}, implies that the differential operators 
\[
\Lcgq,~\Lcgq^{\dagger},~\partial_{t} - \Lcgq,~\partial_{t} - \Lcgq^{\dagger},
\]
are hypoelliptic. 
Let in the following \cref{prop:cond:hyp} $\Sigma_{i}, 1\leq i \leq n+m$ denote the column vectors of $\Sigmabf$, i.e.,
\[
\Sigmabf = \left [\Sigma_{1}, \dots \Sigma_{m+n} \right ] \in \RR^{(n+m)\times(n+m)}.
\]
\begin{proposition}\label{prop:cond:hyp}
Let $\Gammaq \equiv \Gammabf \in \RR^{(n+m)\times(n+m)}$ such that $-\Gammabf$ is stable and $\Sigmaq \equiv\Sigmabf \in \RR^{(n+m)\times(n+m)}$. Any of the following conditions
is sufficient for \cref{eq:gle:q} to satisfy the parabolic H\"ormander condition.
\begin{enumerate}[label=(\roman*)]
\item \label{item:cond:hyp:1} ${\bm F} = {\bm H} \q +{\bm h} $, where ${\bm H} \in \RR^{n\times n}, {\bm h} \in \RR$,  and for all $\q\in \qDomain $
\begin{equation}\label{item:cond:hyp:1:eq:1}
\mathbb{R}^{2n+m} = {\rm lin}\left( \left \{ {\bm S}^{k} \begin{pmatrix} \0 \\ \Sigma_{i} \end{pmatrix}: ~k \in \mathbb{N},~ 1 \leq i \leq n+m \right \} \right ), 
\end{equation}
where
\[
{\bm S} := 
-\begin{pmatrix}
\0 & -\I_{n} & {\bm 0}\\
{\bm H} &  \Gammabf_{1,1} &\Gammabf_{1,2}\\
{\bm 0} & \Gammabf_{2,1}  &\Gammabf_{2,2}
\end{pmatrix}\in \RR^{(2n+m)\times(2n+m)}.
\]
\item \label{item:cond:hyp:2}
\begin{equation}\label{item:cond:hyp:2:eq:1}
\mathbb{R}^{n+m} = {\rm lin}\left(  \bigcup_{1\leq i \leq n+m}\left \{ \Gammabf^{k}\Sigma_{i} : ~k \leq k_{i} \right \} \right ), 
\end{equation}
where $k_{i},  ~1\leq i \leq n+m$ are defined as 
\begin{equation}\label{item:cond:hyp:2:eq:2}
k_{i} := \argmax_{k \in \mathbb{N}} {\bm S}_{0}^{k} \begin{pmatrix} \0 \\ \Sigma_{i}  \end{pmatrix}\in \{ \0\} \times \RR^{n+m},
\end{equation}
with
\[
{\bm S}_{0} := 
-\begin{pmatrix}
\0 & -\I_{n} & {\bm 0}\\
{\bm 0} &  \Gammabf_{1,1} &\Gammabf_{1,2}\\
{\bm 0} & \Gammabf_{2,1}  &\Gammabf_{2,2}
\end{pmatrix} \in \RR^{(2n+m)\times(2n+m)}.
\]
\item\label{item:cond:hyp:3} 
 ${\rm rank}\left ( \Sigmabf_{2} \right ) = m $, and ${\rm rank}\left ( \Gammabf_{1,2}\right ) =n$.
\end{enumerate}
\end{proposition}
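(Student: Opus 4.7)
The parabolic H\"ormander condition requires that the Lie algebra generated by the diffusion vector fields $X_i := (\0, \Sigma_i)^{\trans} \cdot \nabla_{\x}$, $1 \leq i \leq n+m$, together with their iterated brackets against the drift $X_0$, spans $\RR^{2n+m}$ at every point. Since each $X_i$ is a constant vector field, the bracket $[X_0, Y] = -DX_0 \cdot Y$ for any constant $Y$, so the plan is to iterate this relation and read off the resulting Krylov-type span.

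For case (i), the affine linearity of $\bm F$ makes $DX_0$ a constant matrix on $\xDomain$, so every iterated bracket $\mathrm{ad}_{X_0}^k X_i$ is a constant vector field equal to $(-DX_0)^k X_i$. I will verify that the span of $\{\bm S^k (\0, \Sigma_i)^{\trans}\}_{k,i}$ coincides with the Krylov span of $(-DX_0)^k (\0, \Sigma_i)^{\trans}$ --- the two generating matrices differ only in the sign of the $\bm H$ block, and the starting vectors $(\0,\Sigma_i)^\trans$ have zero $\q$-component --- so that \eqref{item:cond:hyp:1:eq:1} is exactly the Kalman-type rank condition yielding H\"ormander's condition for the linear SDE.

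For case (ii), I proceed by induction on the bracket level, using the structural fact that the $\q$-dependent block $\nabla \bm F(\q)$ of $DX_0$ acts only on the $\q$-component of its argument. Consequently, as long as $\mathrm{ad}_{X_0}^{k-1}X_i$ is a constant vector field with zero $\q$-component, so is $\mathrm{ad}_{X_0}^k X_i = -DX_0 \cdot \mathrm{ad}_{X_0}^{k-1}X_i$, and it equals (up to a sign) $\bm S_0^k (\0, \Sigma_i)^{\trans}$. The definition \eqref{item:cond:hyp:2:eq:2} of $k_i$ is precisely the largest $k$ for which this iteration stays in $\{\0\}\times\RR^{n+m}$, so for $0 \leq k \leq k_i$ the bracket contributes a constant vector whose $(\p, \s)$-part is $(-\Gammabf)^k \Sigma_i$. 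Hypothesis \eqref{item:cond:hyp:2:eq:1} then asserts that these vectors span $\{\0\}\times\RR^{n+m}$. For the $\q$-direction, I observe that the bracket at level $k_i + 1$ remains a constant vector field (since its predecessor has zero $\q$-component), with $\q$-part equal (up to a sign) to $(\Gammabf^{k_i}\Sigma_i)^{\p}$; since the spanning condition of \eqref{item:cond:hyp:2:eq:1} surjects under projection to the $\p$-coordinate onto $\RR^n$, combining these transition brackets across all indices $i$ yields the remaining $\q$-directions, completing the span.

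Case (iii) follows by reduction to (ii): $\mathrm{rank}(\Sigmabf_2) = m$ ensures that $\{\0\}\times\{\0\}\times\RR^m$ lies in the initial diffusion span, and one bracket produces vectors $\Gammabf_{1,2}\Sigma_i^{\s}$ which by $\mathrm{rank}(\Gammabf_{1,2}) = n$ span $\RR^n$ in the $\p$-direction, verifying \eqref{item:cond:hyp:2:eq:1} with each $k_i \leq 1$. The main obstacle will be the induction in case (ii): one must carefully check that the transition bracket at level $k_i+1$ is indeed a constant vector field and that the collective $\q$-components $\{(\Gammabf^{k_i}\Sigma_i)^{\p}\}_i$ span $\RR^n$, using only the weaker global spanning assumption that $\Gammabf^k \Sigma_i$ for $0 \leq k \leq k_i$ span $\RR^{n+m}$.
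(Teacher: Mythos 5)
Your overall strategy --- constant diffusion fields, so every iterated bracket against the drift reduces to a matrix power of the drift Jacobian applied to $(\0,\Sigma_i)^{\trans}$, followed by a staged analysis of how long these iterates remain in $\{\0\}\times\RR^{n+m}$ and one final bracket to generate the $\q$-directions --- is exactly the paper's proof. Your treatment of (ii) matches it and is in fact slightly more careful: you correctly note that only the level-$(k_i+1)$ brackets carry nonzero $\q$-parts and that these alone must span $\RR^{n}$ because the lower-order iterates project to zero in the $\p$-coordinate; the paper instead brackets an entire basis of $\{\0\}\times\RR^{n+m}$ once more, which is equivalent. Your reduction of (iii) to (ii) is also the paper's (with the same shared imprecision: $\mathrm{rank}(\Sigmabf_2)=m$ gives surjectivity of the projection of $\mathrm{range}(\Sigmabf)$ onto the $\s$-coordinates, not containment of $\{\0\}\times\{\0\}\times\RR^{m}$ in the diffusion span; this is harmless when $\Sigmabf_1=\0$, the case of interest).

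The one step that will not go through is your case-(i) plan to ``verify that the span of $\{{\bm S}^{k}(\0,\Sigma_i)^{\trans}\}$ coincides with the Krylov span of the true iterated brackets,'' i.e., of the drift Jacobian, which differs from ${\bm S}$ in the sign of the ${\bm H}$ block. These two Krylov spans can genuinely differ. Take $n=m=1$, $\Gammabf=\mathrm{diag}(1,2)$, ${\bm H}=2$, ${\bm h}=0$, and $\Sigmabf$ with single nonzero column $\Sigma_1=(1,1)^{\trans}$. The drift Jacobian has rows $(0,1,0)$, $(2,-1,0)$, $(0,0,-2)$, and its Krylov space generated from $(0,1,1)^{\trans}$ is the two-dimensional invariant plane spanned by $(0,1,1)^{\trans}$ and $(1,-1,-2)^{\trans}$, so the parabolic H\"ormander condition fails; yet ${\bm S}$ has rows $(0,1,0)$, $(-2,-1,0)$, $(0,0,-2)$ and ${\bm S}^{2}(0,1,1)^{\trans}=(-1,-1,4)^{\trans}$ leaves that plane, so \eqref{item:cond:hyp:1:eq:1} holds. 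The resolution is not to reconcile the two spans but to correct the sign: the paper's own proof simply asserts $\nabla_{\x}\diffusion_0={\bm S}$, i.e., it reads the condition with ${\bm S}$ equal to the actual drift Jacobian (equivalently, with $-{\bm H}$ in place of ${\bm H}$ in the displayed ${\bm S}$). You should adopt that reading rather than attempt the (false) equivalence of the two Krylov spans.
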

\begin{proof}
In relation to \cref{thm:hormander} the coefficients $\diffusion_{i}$ are  
\[\diffusion_{0}(\x) =
-\G \begin{pmatrix}
- {\bm F}(\q)\\
\z
\end{pmatrix},
\]
and
\[
\diffusion_{i} = 
\beta^{-\frac{1}{2}}
\begin{pmatrix}
\0 \\
\Sigma_{i}
\end{pmatrix} \in \mathbb{R}^{2n+m},  1 \leq i \leq n+m,
\]
with 
$\G\in \RR^{(2n+m)\times(2n+m)}$ as defined in \eqref{eq:def:G:matrix}. 
Since for $i>0$ the coefficients $\diffusion_{i}$ are constant in $\x$, we find 
$[\diffusion_{i},\diffusion_{j}] = \0$ and $[\diffusion_{0},\diffusion_{i}] = - \nabla_{\x} \diffusion_{0}\,\diffusion_{i}$ for $i,j > 0$, where $\nabla_{\x} \diffusion_{0}$ denotes the Jacobian matrix of  $\diffusion_{0}$, i.e.,
\[
\nabla_{\x} \diffusion_{0} = 
-\begin{pmatrix}
\0 & -\I_{n} & {\bm 0}\\
-\nabla {\bm F}(\q) &  \Gammabf_{1,1} &\Gammabf_{1,2}\\
{\bm 0} & \Gammabf_{2,1}  &\Gammabf_{2,2}
\end{pmatrix},
\] 
and $\nabla {\bm F}(\q)$ denotes the Jacobian of the force ${\bm F}$. Therefore,
\begin{equation}\label{rec:hoermander}
\mathscr{V}_{1} = \{ -\nabla_{\x} \diffusion_{0} \,{\bm v} : {\bm v} \in  \mathscr{V}_{0} \} \cup  \mathscr{V}_{0},
\end{equation}
where 
\[
 \left \{ \begin{pmatrix}
\0 \\
\Sigma_{i}
\end{pmatrix}  \right   \}_{i=1}^{n+m},
\]
\begin{itemize}
\item In the case of \ref{item:cond:hyp:1} it follows that $
\nabla_{\x} \diffusion_{0}(\x)  = {\bm S}.$ In particular, since $\nabla_{\x} \diffusion_{0}$ is constant in $\x$, \eqref{rec:hoermander} generalizes to
\begin{equation}\label{rec:hoermander:2}
\mathscr{V}_{i+1} =  \{  {\bm S} \,{\bm v} : {\bm v} \in  \mathscr{V}_{i} \} \cup  \mathscr{V}_{i}, ~i \in \NN.
\end{equation}
Since $\mathscr{V}_{i}$ consists only of constant functions, we have ${\rm lin}(\mathscr{V}_{i}(\x)) \equiv {\rm lin}(\mathscr{V}_{i})$ for all $\x\in \xDomain, i \in \mathbb{N}$, thus \eqref{rec:hoermander:2} implies that \eqref{item:cond:hyp:1:eq:1} is a sufficient condition for the SDE \eqref{eq:gle:q} to satisfy the parabolic H\"ormander condition.
\item Regarding \ref{item:cond:hyp:2}: Let $k_{\max} = \max_{1\leq i \leq n+m } k_{i}$. $k_{i}$ being as defined in \eqref{item:cond:hyp:2:eq:2} together with \eqref{item:cond:hyp:2:eq:1} ensures that there is $\widetilde{\mathscr{V}} \subset \mathscr{V}_{k_{\max}} $ such that all elements in $\widetilde{\mathscr{V}}$ are constant and 
\[
{\rm lin} \left (  \widetilde{\mathscr{V}} \right ) \equiv \begin{pmatrix} \0 \\ \RR^{n+m} \end{pmatrix}.
\]
Therefore, 
\begin{equation}\label{rec:hoermander:3}
\mathscr{V}_{k_{\max}+1} \supset  \{  -\nabla \diffusion_{0} \,{\bm v} : {\bm v} \in  \widetilde{\mathscr{V}} \} \cup   \widetilde{\mathscr{V}},
\end{equation}
thus for all $\x\in \xDomain$
\[
{\rm lin} \(\mathscr{V}_{k_{\max}+1}(\x) \) =  {\rm lin} \left ( \{  -\nabla \diffusion_{0}(\x) \,{\bm v}(\x) : {\bm v} \in  \widetilde{\mathscr{V}} \} \cup   \widetilde{\mathscr{V}}(\x) \right ) = \RR^{2n+m},
\]
where the latter equivalence is due to the fact that
\[
{\rm lin} \left ( \{  -\nabla \diffusion_{0}(\x) {\bm v} \, : {\bm v} \in B \} \cup   B  \right ) =\RR^{2n+m}, 
\]
for all $\x\in\xDomain$ and any basis $B \subset \RR^{2n+m}$ of $\{\0\} \times \RR^{n+m}$.
\item Regarding \ref{item:cond:hyp:3}: Since ${\rm lin}(\Sigmabf_{2})= \RR^{m}$ and ${\rm rank} \left ( \Gammabf_{1,2}\right )= n$ it follows that 
\[
\{\0\} \times  \mathbb{R}^{n+m} = {\rm lin}\left( \left \{ \begin{pmatrix} \0 \\ \Gammabf\Sigma_{i} \end{pmatrix}, 1 \leq i \leq n+m  \right \} \right ),
\]
thus the result follows by \ref{item:cond:hyp:2}.
\end{itemize}
\qed\end{proof}

\subsection{Technical lemmas required in the proofs of ergodicity of \eqref{eq:gle:q} with stationary random force}\label{sec:lem:stat}
In this subsection we provide the necessary technical lemmas to which we refer in the proofs of \cref{thm:ergo:bounded:1,thm:ergo:unbounded:2,thm:ergo:unbounded:3}, thus in the remainder of this subsection we assume $\Gammaq \equiv \Gammabf, \Sigmaq \equiv \Sigmabf$. 
We begin by showing the existence of a class of suitable Lyapunov functions in the case of a bounded configurational domain, i.e., $\qDomain=\mathbb{T}^{n}$.
\begin{lemma}\label{prop:lya:torus}
Let $\qDomain = \mathbb{T}^{n}$, $-\Gammabf \in \RR^{(n+m)\times(n+m)}$ stable, 
then
\[
\mathcal{K}_{l}(\q,\p,\s) = \left ( \z^{\trans} \C \z \right )^{l}+1, ~ l \in \mathbb{N},
\]
where $\C \in \RR^{n+m}$ is a symmetric positive definite matrix such that  $\Gammabf^{\trans} \C + \C \Gammabf$ is positive definite, 
defines a family of Lyapunov functions for the differential operator $\Lcgq$, i.e.,  for each $l \in \mathbb{N}$ there exist constants $a_{l}>0$, $b_{l}\in \RR$, 
such that for $\Lc = \Lcgq, \Kc = \Kc_{l}$, \cref{as:lyapunov:inf} holds with $a = a_{l}, b=b_{l}$.
\end{lemma}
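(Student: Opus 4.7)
The strategy is to treat $W(\z) := \z^{\trans}\C\z$ as a quadratic Lyapunov function for the linear $\z$-drift of \cref{eq:gle:q} and then absorb the $\q$-dependent coupling using the compactness of $\qDomain=\mathbb{T}^{n}$. First I would record the existence of the matrix $\C$: since $-\Gammabf$ is stable, the classical Lyapunov matrix theorem guarantees that for any symmetric positive definite $\Q_{0}$ (e.g.\ $\Q_{0}=\I_{n+m}$) there is a unique symmetric positive definite $\C$ with $\Gammabf^{\trans}\C+\C\Gammabf = \Q_{0}$, which is exactly the condition imposed. In particular $\lambda := \lambda_{\min}(\Gammabf^{\trans}\C+\C\Gammabf)>0$, and $\lambda_{\min}(\C)\norm{\z}_{2}^{2}\leq W(\z) \leq \lambda_{\max}(\C)\norm{\z}_{2}^{2}$, which also gives $\Kc_{l}=W^{l}+1=\bT(\norm{\z}^{2l})$ and $\Kc_{l}\geq 1$.

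Next I would compute $\Lcgq W$ in closed form using the splitting $\Lcgq=\Lc_{H}+\Lc_{O}$. Since $W$ has no $\q$-dependence, only the $\nabla_{\p}$ part of $\Lc_{H}$ contributes and, writing $\C=\begin{pmatrix}\C_{1,1}&\C_{1,2}\\ \C_{1,2}^{\trans}&\C_{2,2}\end{pmatrix}$ in block form, yields
\[
\Lc_{H} W = 2\,{\bm F}(\q)^{\trans}\bigl(\C_{1,1}\p+\C_{1,2}\s\bigr).
\]
Using $\nabla_{\z}W=2\C\z$ and $\nabla_{\z}^{2}W=2\C$, a direct calculation gives
\[
\Lc_{O} W = -\z^{\trans}\bigl(\Gammabf^{\trans}\C+\C\Gammabf\bigr)\z + \beta^{-1}\mathrm{tr}\!\bigl(\Sigmabf\Sigmabf^{\trans}\C\bigr).
\]
Because $\qDomain=\mathbb{T}^{n}$ is compact and ${\bm F}\in\mathcal{C}^{\infty}$, the supremum $\norm{{\bm F}}_{L_{\infty}}$ is finite, so $\abs{\Lc_{H}W}\leq C_{1}\norm{\z}_{2}$ uniformly in $\q$; combining this with the lower bound $\z^{\trans}(\Gammabf^{\trans}\C+\C\Gammabf)\z \geq \lambda \norm{\z}_{2}^{2}$ and a Young inequality to absorb the linear term into the quadratic one, I obtain constants $\alpha>0$, $\beta_{0}\in\RR$ with
\[
\Lcgq W \leq -\alpha\norm{\z}_{2}^{2} + \beta_{0}.
\]

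For general $l \in \mathbb{N}$ (the case $l=0$ being trivial), I would apply the chain rule for the generator to $f(W)=W^{l}$, using that the diffusion of \cref{eq:gle:q} acts only on $\z$, which gives
\[
\Lcgq W^{l} = l\,W^{l-1}\,\Lcgq W \;+\; 2\beta^{-1}l(l-1)\,W^{l-2}\,\z^{\trans}\C\Sigmabf\Sigmabf^{\trans}\C\z.
\]
The first term is bounded above by $-\alpha\, l\, W^{l-1}\norm{\z}_{2}^{2} + \beta_{0}\, l\, W^{l-1}$, and the lower bound $W\leq \lambda_{\max}(\C)\norm{\z}_{2}^{2}$ converts the leading negative piece to $-\bigl(\alpha l/\lambda_{\max}(\C)\bigr)W^{l}$. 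The second ``Itô correction'' is controlled by $\norm{\C\Sigmabf\Sigmabf^{\trans}\C}\norm{\z}_{2}^{2} \leq C_{2}\,W$, giving a contribution of order $W^{l-1}$. A final Young inequality $cW^{l-1}\leq \varepsilon W^{l}+C_{\varepsilon}$, with $\varepsilon$ sufficiently small, absorbs all subleading $W^{l-1}$ terms into a slightly smaller multiple of $W^{l}$ plus a constant, yielding constants $a_{l}>0$, $\tilde b_{l}\in\RR$ with $\Lcgq W^{l}\leq -a_{l}W^{l}+\tilde b_{l}$. Since $W^{l}=\Kc_{l}-1$, this is equivalent to $\Lcgq\Kc_{l}\leq -a_{l}\Kc_{l}+b_{l}$ with $b_{l}=\tilde b_{l}+a_{l}$, which is the drift condition of \cref{as:lyapunov:inf}.

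I do not anticipate a genuine obstacle in this argument: the only non-routine ingredient is the existence of $\C$, which is furnished by stability of $-\Gammabf$, and the compactness of $\mathbb{T}^{n}$ makes the $\q$-dependent term $\Lc_{H}W$ uniformly controllable, so the Young-type absorptions go through cleanly. The main care is bookkeeping the constants so that the negative quadratic contribution from $\Lc_{O}W$ survives both the linear force perturbation and the Itô correction generated by the $f(W)=W^{l}$ chain rule.
\qed
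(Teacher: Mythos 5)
Your proposal is correct and follows essentially the same route as the paper's proof: existence of $\C$ from stability of $-\Gammabf$ via the Lyapunov matrix equation, the closed-form computation of $\Lcgq(\z^{\trans}\C\z)$, absorption of the force term using compactness of $\mathbb{T}^{n}$ and a Young inequality, and for $l>1$ the same chain-rule identity with the It\^o correction $2\beta^{-1}l(l-1)W^{l-2}\z^{\trans}\C\Sigmabf\Sigmabf^{\trans}\C\z$ controlled by a further Young absorption. The only differences are cosmetic (your trace formula for the second-order term is in fact cleaner than the paper's entrywise sum), so no gap to report.
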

\begin{proof}
We show the existence of suitable constants $\widetilde{a}_{l},\widetilde{b}_{l}$ so that the inequality  \cref{eq:lyapunov:inf} is satisfied for $\Kc = \widetilde{\Kc}_{l} :=  \Kc_{l}-1$, and $\Lc = \Lcgq,  a=\widetilde{a}_{l}, b = \widetilde{b}_{l}$, which directly implies the statement of \cref{prop:lya:torus} for $a_{l} = \widetilde{a}_{l}$ and $b_{l} = \widetilde{b}_{l} +\widetilde{a}_{l}$. 
$-\Gammabf$ being a stable matrix ensures that there indeed exists a symmetric positive definite matrix $\C$ such that $\Gammabf^{\trans} \C + \C \Gammabf$ is positive definite. 
Without loss of generality let $\min \sigma (\C) = 1$, so that 
\begin{equation}\label{eq:prop:lya:torus:ieq:1}
\norm{\z}_{2}^{2} \leq \z^{\trans}\C \z = \Kc_{1}(\x) -1.
\end{equation}
Furthermore,
\[
\lambda = \sup_{\z \in \zDomain, \norm{\z}_{2}=1} \frac{ \z^{\trans}(\Gammabf^{\trans}\C + \C \Gammabf) \z}{ \z^{\trans} \C\z},
\]
so that
\begin{equation}\label{eq:prop:lya:torus:ieq:2}
 2\z^{\trans}\Gamma^{\trans}\C\z  \geq  \lambda \z^{\trans}\C \z  =\lambda ( \Kc_{1}(\x) - 1).
\end{equation}

We first consider the case $l=1$:
\begin{equation*}
\begin{aligned}
( \mathcal{L}_{H}+\mathcal{L}_{O}) \widetilde{\Kc}_{1}(\x) 
=& \,
 [ 2\p^{\trans}\C_{1,1} + 2 \s^{\trans} \C_{1,2} -  \p^{\trans}  ] {\bm F}(\q) - 2 \z^{\trans} \Gammabf^{\trans}\C \z \\
 &+ \beta^{-1}\sum_{i,j}\left[\C\Sigmabf \Sigmabf^{\trans}\C \right]_{i,j} \\
\leq & ~
 c_1 ||\z||_{2}- 2 \z^{\trans} \Gammabf^{\trans}\C \z + \beta^{-1}\sum_{i,j}\left[\C\Sigmabf \Sigmabf^{\trans}\C \right]_{i,j} \\
 \leq & ~
  \frac{c_1}{\epsilon_{1}} + \epsilon_{1} ||\z||_{2}^{2}- 2 \z^{\trans} \Gammabf^{\trans}\C \z + \beta^{-1}\sum_{i,j}\left[\C\Sigmabf \Sigmabf^{\trans}\C \right]_{i,j}, 
\end{aligned}
\end{equation*}
where 
\[
c_1 =  \max_{\q \in \qDomain, \|\z\|_{2} \leq 1} [ 2\p^{\trans}\C_{1,1} + 2 \s^{\trans} \C_{1,2} -  \p^{\trans}  ]  {\bm F}(\q).
\]
Thus, by \cref{eq:prop:lya:torus:ieq:1} and \cref{eq:prop:lya:torus:ieq:2},
\begin{equation*}
\begin{aligned}
( \mathcal{L}_{H}+\mathcal{L}_{O}) \widetilde{\Kc}_{1}(\x) &\leq  \frac{c_1}{\epsilon_{1}} + \epsilon_{1}\widetilde{\Kc}_{1}(\x)  -\lambda \widetilde{\Kc}_{1}(\x)  + \beta^{-1}\sum_{i,j}\left[\C\Sigmabf \Sigmabf^{\trans}\C \right]_{i,j}   \\
&=
- \widetilde{a}_{1}\widetilde{\Kc}_{1}(\x) + \widetilde{b}_{1},
\end{aligned}
\end{equation*}
with 
\[
\widetilde{a}_{1} := (\lambda-\epsilon_{1}), ~
\widetilde{b}_{1} := \frac{c_1}{\epsilon_{1}}  +  (\lambda+ \epsilon_{1}) + \beta^{-1}\sum_{i,j}\left[\C\Sigmabf \Sigmabf^{\trans}\C \right]_{i,j},
\]
so that  $\widetilde{a}_{1}>0$ for sufficiently small $\epsilon_{1} >0 $.\\

For $l>1$ we find:
\begin{equation}
\begin{aligned}
(\mathcal{L}_{H} + \mathcal{L}_{O})\widetilde{\Kc}_{l}(\x) 
=& ~
 l \widetilde{\Kc}_{l-1}(\x) \big [ \mathcal{L}_{H}\widetilde{\Kc}_{1}(\x) 
+ (-(\Gammabf\z) \cdot \nabla_{\z} \widetilde{\Kc}_{1}(\x)) 
+ \beta^{-1}\sum_{i,j}\left[\Sigmabf \Sigmabf^{\trans}\C \right]_{i,j} \big]\\
&+ 2 l (l-1) \beta^{-1}\z^{\trans}\C\Sigmabf\Sigmabf^{\trans} \C \z \widetilde{\Kc}_{l-2}(\x).
\end{aligned}
\end{equation}
Let
\[
\widetilde{\lambda} := \sup_{\x \in \xDomain, \norm{\z}_{2}=1} \left ( \frac{\z^{\trans}\C\Sigmabf\Sigmabf^{\trans} \C \z}{\widetilde{\Kc}_{1}(\x)} \right ),
\]
so that
\[
\forall \x \in \xDomain,~\z^{\trans}\C\Sigmabf\Sigmabf^{\trans} \C \z \widetilde{\Kc}_{l-2}(\x) \leq  \widetilde{\lambda}  \widetilde{\Kc}_{l-1}(\x).
\]
Thus, with
\[
c_{l} := \min \left ( 0, - \beta^{-1}\sum_{i,j}\left[\C\Sigmabf \Sigmabf^{\trans}\C \right]_{i,j} +  \beta^{-1}\sum_{i,j}\left[\Sigmabf \Sigmabf^{\trans}\C \right]_{i,j}  + 2(l-1)\beta^{-1}\widetilde{ \lambda} \right ),
\]
we find
 \begin{equation}
\begin{aligned}
(\mathcal{L}_{H} + \mathcal{L}_{O})\widetilde{\Kc}_{l}(\x)
\leq & ~
  l \widetilde{\Kc}_{l-1}(\x) \left( (\Lc_{H}+\Lc_{O}) \widetilde{\Kc}_{1}(\x)  + c_{l} \right )\\  
\leq & ~
l \widetilde{\Kc}_{l-1}(\x) \left (  - \widetilde{a}_{1} \widetilde{\Kc}_{1}(\x) + \widetilde{b}_{1} + c_{l} \right ) \\
\leq & ~
l \left ( -\widetilde{a}_{1}  \Kc_{l}(\x) + \frac{\widetilde{b}_{1}+ c_{l} }{\epsilon_{l}^{l-1}}  + \epsilon_{l}\Kc_{l}(\x) \right )
=  -\widetilde{a}_{l} \Kc_{l}(\x) + \widetilde{b}_{l},
\end{aligned}
\end{equation}
with
\[
\widetilde{a}_{l} := l(\widetilde{a}_{1}-\epsilon_{l}), ~
\widetilde{b}_{l} :=   l \frac{\widetilde{b}_{1}+c_{l}}{\epsilon_{l}^{l-1}},
\]
where $\epsilon_{l} > 0$ is chosen sufficiently small so that $\widetilde{a}_{l}>0$.
\qed\end{proof}

We next show the existence of a minorization condition in the case of $\qDomain=\mathbb{T}^{n}$. The idea of the proof is to decompose the diffusion process into an Ornstein-Uhlenbeck process and a bounded remainder term, which then enables us to conclude the existence of a minorizing measure by virtue of the fact that the solution of Fokker-Planck equation associated with the Ornstein-Uhlenbeck process is a non-degenerate Gaussian at all times $t>0$ and thus has full support. The idea of this approach is borrowed from \cite{LeMaSt2015} where it was used to show the minorization condition for a discretized version of the under-damped Langevin equation. Other applications of this trick can be found in \cite{redon2016error,Joubaud2014}.

\begin{lemma}\label{thm:minorization:torus}
Let $\qDomain=\mathbb{T}^{n}$. If  $\Gammabf\in \RR^{(n+m)\times(n+m)}$ and $\Sigmabf \in \RR^{(n+m)\times(n+m)}$ are as in \cref{thm:ergo:bounded:1}, then \cref{as:minorization:inf} (minorization condition) holds for the SDE \eqref{eq:gle:q}.
\end{lemma}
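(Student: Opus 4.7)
The plan is to establish the minorization condition by combining parabolic H\"ormander hypoellipticity with the Ornstein--Uhlenbeck decomposition indicated in the paragraph preceding the statement. The key feature of the torus case is that since $\qDomain=\mathbb{T}^{n}$ is compact and ${\bm F}\in\mathcal{C}^{\infty}(\mathbb{T}^{n},\RR^{n})$, the force is uniformly bounded, $M:=\sup_{\q\in\mathbb{T}^{n}}\norm{{\bm F}(\q)}_{2}<\infty$.

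First, the standing assumptions on $\Gammabf$ and $\Sigmabf$ in \cref{thm:ergo:bounded:1} guarantee that the parabolic H\"ormander condition holds (see \cref{prop:cond:hyp}), so \cref{thm:hormander} yields a jointly smooth transition density $p_{t}(x_{0},y)$ for every $t>0$. Writing the $\z$-equation in variation-of-constants form, I would introduce the auxiliary linear Gaussian (Ornstein--Uhlenbeck) process $\bm{y}(t)$ obtained from \cref{eq:gle:q} by deleting the ${\bm F}(\q(s))$ term while retaining the same Brownian motion $\W$. The difference $\x(t)-\bm{y}(t)$ is then an almost surely bounded correction satisfying $\norm{\x(t)-\bm{y}(t)}_{2}\leq M\int_{0}^{t}\norm{e^{-(t-s)\Gammabf}}_{2}\,\dd s$. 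The covariance of $\bm{y}(t)$ is strictly positive definite for every $t>0$, because the Kalman rank condition governing non-degeneracy of the linear Gaussian is exactly equivalent to the parabolic H\"ormander condition for the constant-coefficient generator of $\bm{y}$; hence $\bm{y}(t)$ possesses a strictly positive smooth Gaussian density on $\xDomain$, with compactness of the torus simply wrapping the marginal in $\q$.

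Second, I would invoke the Stroock--Varadhan support theorem to transfer full support from $\bm{y}$ to $\x$: the support of the law of $\x(t^{\star})$ started at $x_{0}$ equals the closure of the reachable set of the associated deterministic control system, and by controllability of the OU part together with smoothness of ${\bm F}$ this reachable set is all of $\xDomain$. Combined with the smoothness from the first step, this forces $p_{t^{\star}}(x_{0},y)>0$ for every $(x_{0},y)\in\xDomain\times\xDomain$ and every $t^{\star}>0$. Joint continuity and compactness then upgrade this pointwise positivity to a uniform lower bound $p_{t^{\star}}(x_{0},y)\geq c(K,U)>0$ on $K\times U$ for any compact $K\subset\xDomain$ and any bounded open $U\subset\xDomain$; taking $K$ to be a sublevel set of the Lyapunov function $\Kc_{l}$ constructed in \cref{prop:lya:torus} yields the minorization $P_{t^{\star}}(x_{0},A)\geq c\,\lambda(A\cap U)$ uniformly in $x_{0}\in K$, which is precisely the content of \cref{as:minorization:inf}.

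The main obstacle is the non-trivial coupling between $\x$ and $\bm{y}$: the correction $\int_{0}^{t}e^{-(t-s)\Gammabf}({\bm F}(\q(s))^{\top},\0^{\top})^{\top}\,\dd s$ depends on the random trajectory of $\q$, so one cannot express the law of $\x(t^{\star})$ as a convolution of the Gaussian law of $\bm{y}(t^{\star})$ with a deterministic shift, nor can one apply Girsanov cleanly without additional rank assumptions on $\Sigmabf$ that are not imposed here. The argument therefore routes through the support theorem plus controllability of the linearised system rather than through an explicit density identity; once strict positivity of the joint density has been established in this manner, the uniform lower bound in $x_{0}$ follows from the H\"ormander smoothness together with compactness of the torus-valued first coordinate.
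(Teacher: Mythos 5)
Your setup mirrors the paper's: the same splitting of \cref{eq:gle:q} into the force-free linear (Ornstein--Uhlenbeck) system and a correction that is bounded because ${\bm F}$ is bounded on $\mathbb{T}^{n}$, and the same use of the parabolic H\"ormander condition to get non-degeneracy of the Gaussian covariance and smoothness of the transition density. You also correctly diagnose the delicate point, namely that the correction $\int_{0}^{t}e^{-(t-s)\Gammabf}({\bm F}(\q(s))^{\trans},\0^{\trans})^{\trans}\,\dd s$ is a \emph{random} functional of the trajectory, so the law of $\x(t)$ is not a deterministic translate of the Gaussian law. The paper handles this by arguing directly that the superposition of the full-support Gaussian part with the uniformly bounded increments still has full support, and then builds the minorizing density as $\rho(x)=\min_{\x_{0}\in\mathcal{C}_{r}}\rho_{\x_{0},t}(x)$ using joint continuity; it deliberately does \emph{not} go through a control argument.

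Your workaround, however, contains a genuine gap. You invoke the Stroock--Varadhan support theorem and then assert that ``by controllability of the OU part together with smoothness of ${\bm F}$'' the reachable set of the controlled system $\dot{\tilde{X}}=\drift(\tilde{X})+\diffusion(\tilde{X})u$ is all of $\xDomain$. Under the hypotheses of \cref{thm:ergo:bounded:1} the diffusion matrix $\Sigmabf$ may be degenerate (e.g.\ $\Sigmabf_{1}=\0$ when $\Gammabf_{1,1}=\0$), so the control acts only on the $\s$-components and must steer $(\q,\p)$ indirectly against the nonlinear force ${\bm F}(\q)$. Controllability of the force-free linear system (the Kalman condition, equivalent to H\"ormander here) does \emph{not} imply controllability, or even approximate controllability, of the nonlinear system: this is exactly the difficulty that forces Rey-Bellet et al.\ to assume strict convexity of the interaction potential in the oscillator-chain setting, and it is the reason the paper proves \cref{thm:minorization:torus} by the decomposition argument rather than via \cref{as:control} and \cref{lem:HiMaSt} (that route is used only in \cref{prop:gle:minorization:3}, where $\Sigmabf$ is assumed invertible so the control problem is trivially solvable). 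As written, your appeal to controllability simply reintroduces, without proof, the hard step that the decomposition argument is designed to avoid; to complete your route you would either have to construct the control explicitly for degenerate $\Sigmabf$, or revert to the paper's argument (or a Girsanov comparison with the force-free system, as in \cref{prop:minorization:R-2} and \cref{thm:erg:gle:nonconst:coeff}) to transfer full support from the OU process to $\x$.
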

\begin{proof}[Proof of \cref{thm:minorization:torus}]
Let $\q(0) = \q_{0}$ and $\z(0) = \z_{0}$ with 
\[
(\q_{0},\z_{0}) \in \qDomain \times \mathcal{C}_{r},
\]
where
\[
C_{r} = \{ z \in \zDomain : \norm{z} < r \},
\] 
for arbitrary but fixed $r>0$. 

We can write the solution of \cref{eq:gle:q} as
\begin{equation}
\z(t) = \z_{0} +  \zDet(t) + \zStoch(t), ~~ \q(t) =\q_{0} +  \qDet(t) + \qStoch(t),
\end{equation}
with 
\[
\zDet(t) = \int_{0}^{t}  e^{-(t-s)\Gammabf}\begin{pmatrix} {\bm F}(\q(s)) \\ \0 \end{pmatrix}\dd s, ~~ \zStoch(t) = \int_{0}^{t} e^{-(t-s)\Gammabf} \Sigmabf \dd \W(s),
\]
and
\[
\qDet(t) = \int_{0}^{t} \Pi_{\p} \zDet(s) \dd s, 
~~\qStoch(t) = \int_{0}^{t} \Pi_{\p} \zStoch(s) \dd s.
\]
\\
The variables $\qStoch(t)$ and $\zStoch(t)$ are correlated and Gaussian, i.e.,
\[
\begin{pmatrix} \qStoch(t) \\ \zStoch(t) \end{pmatrix} \sim \mathcal{N} ( {\bm \mu}_{t}, \mathcal{V}_{t} ),
\]
with some $ {\bm \mu}_{t} \in \xDomain$ and $\mathcal{V}_{t} \in \RR^{(2n+m) \times (2n+m)}$. More specifically, $\tilde{\z}(t) = \z(0)+\zStoch(t)$ and $\q(0) +\qStoch(t)$ corresponds to the solution of 
of the linear SDE
\begin{equation}\label{gle:free}
\begin{aligned}
\dot{\tilde{\q}} &= \tilde{\p}, \\\
\dot{\tilde{\z}} & = - \Gammabf \tilde{\z} + \Sigmabf \dot{\W},\\
\end{aligned}
\end{equation}
where $\tilde{\z}(t) = (\tilde{\p}(t),\tilde{\s}(t))\in \pDomain \times \sDomain$. The law of $\tilde{\q}(t),\tilde{\z}(t)$ has full support for all $t>0$, provided that the covariance matrix $\mathscr{V}_{t}$ is invertible. 
This is indeed the case since $\Gammabf$ and $\Sigmabf$ are required to be such that \cref{eq:gle:q} satisfies the  parabolic H\"ormander condition. It follows that the system \eqref{gle:free} satisfies the parabolic H\"ormander condition. By \cref{thm:hormander}, we conclude that the law of $(\tilde{\q}(t),\tilde{\z}(t))$ has a density with respect to the Lebesgue measure for any $t>0$, which rules out the possibility of $\mathscr{V}_{t}$ being singular.\\

Let $\C\in\RR^{(n+m)\times(n+m)}$ be symmetric positive definite such that $\Gammabf \C + \C \Gammabf^{\trans}$ is positive definite as well, and consider the norm $\norm{\cdot}_{\C}$,
\[
\norm{\cdot}_{\C}:=\z^{\trans}\C\z, ~\z \in \RR^{n+m}.
\] 
The increment $\zDet(t)$ is uniformly bounded since 
\[
\norm{ \zDet(t) }_{\C} \leq \norm{\Gammabf^{-1}}_{\mathcal{B}(\C)} \norm{{\bm F}}_{L^{\infty}} <\infty,
\]
where 
\[
\norm{\Gammabf^{-1}}_{\mathcal{B}(\C)} := \max_{v \in \RR^{2n}} \frac{\norm{\Gammabf^{-1}v}_{\C}}{\norm{v}_{\C}} = \frac{1}{2}\min \sigma\( \Gammabf^{\trans}\C +\C\Gammabf \),
\]
denotes the operator norm of $\Gammabf^{-1}$ induced by $\norm{\cdot}_{\C}$.
It follows that also $\qDet(t)$ is bounded since
\[
\norm{\qDet(t)} \leq t\norm{ \zDet(t) }_{\C}  < \infty.
\]
\\
Let $\mu_{x_{0},t}$ denote the law of  $(\q(t),\z(t))$ and $\rho_{x_{0},t}$ be the associated density. For fixed $t>0$, the terms $\qDet(t)$ and $\zDet(t)$ are bounded and the law of $(\q(0)+\qStoch(t), \z(0)+\zStoch(t))$ has full support, in particular the measure $\mu_{x_{0},t}(\dd \x) = \rho_{x_{0},t}(\x)\dd \x $ of the superposition 
\[
(\q(t),\z(t)) =(\q(0)+\qDet(t)+\qStoch(t), \z(0)+\zDet(t)+\zStoch(t))
\]
has full support.
Now define $\rho \in \mathcal{C}(\xDomain, \RR_{+})$ as 
\[
\rho(x) := \min_{\x_{0} \in \mathcal{C}_{r}} \rho_{x_{0},t} (x).
\] 
By construction the associated probability measure satisfies the properties of $\nu$ in \cref{as:minorization:inf}.
\qed\end{proof}

We next consider the case $\qDomain =\RR^{n}$. The following \cref{prop:lya:R} shows the existence of a suitable class of Lyapunov functions.
\begin{lemma}\label{prop:lya:R}
Let $\qDomain = \mathbb{R}^{n}$. If
\begin{enumerate}[label=(\roman*)]
\item\label{item:posdef:GammaQ} $-\Gammabf \in \RR^{(n+m)\times(n+m)}$ is a stable matrix and $\Sigmabf \in \RR^{(n+m)\times(n+m)}$ such that 
\[
\Gammabf_{2,2}\Q+\Q \Gammabf_{2,2}^{\trans}
\]
is positive definite 
with $\Q$ as speficied in \cref{as:gle:q},
\item the force ${\bm F} \in \mathcal{C}^{\infty}(\mathbb{R}^{n},\mathbb{R}^{n})$ satisfies \cref{as:potential:unbounded}.
\end{enumerate}
Furthermore, if either
\begin{enumerate}[label=(\roman*),resume]
\item $\Gammabf_{1,1}$ is positive definite, 
\end{enumerate}
or
\begin{enumerate}[label=(\roman*),resume]
\item  the force ${\bm F}$ satisfies \cref{as:potential:unbounded:2}, 
\end{enumerate}
then
\begin{equation}\label{eq:lyapunov:def}
\mathcal{K}_{l}(\q,\p,\s) = \left ( \z^{\trans} \C_{A,B} \z + \norm{\q}_{2}^{2} + 2\langle \p, \q \rangle + B D( V(\q) -  u_{\min} ) + 1 \right )^{l}, ~ l \in \mathbb{N},
\end{equation}
where
\[
\C_{A,B} =
\begin{pmatrix}
B \I_{n} & A\Gammabf_{2,1}^{\trans} \\
A\Gammabf_{2,1} & B \Q^{-1}   
\end{pmatrix}\in \RR^{(n+m)\times(n+m)},
\] 
is a symmetric positive definite matrix for suitably chosen scalars $A,B >0$, and $V \in \mathcal{C}^{\infty}(\RR^{n},\RR)$ as specified in \cref{as:potential:unbounded}, defines a family of Lyapunov functions for the differential operator $\Lcgq$, i.e., for each $l \in \mathbb{N}$ there exist constants $a_{l}>0$, $b_{l}\in \RR$, 
such that for $\Lc = \Lcgq, \Kc = \Kc_{l}$, \cref{as:lyapunov:inf} holds for $a = a_{l}, b=b_{l}$.
\end{lemma}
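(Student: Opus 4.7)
The plan is to verify the Foster--Lyapunov drift condition $\Lcgq\Kc_l \leq -a_l\Kc_l + b_l$ for the candidate \cref{eq:lyapunov:def}, following the same blueprint as in the proof of \cref{prop:lya:torus}: first establish the case $l=1$, then bootstrap to $l>1$ using the chain rule for $\Lcgq$ applied to a power. The novelty compared with the torus case is that the function $\Kc_1$ must now coerce $\q$ as well, and this is the role of the configurational potential $V$ supplied by \cref{as:potential:unbounded} and of the cross term $2\langle\p,\q\rangle$.

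First I would verify that one can choose $A,B>0$ so that $\C_{A,B}$ is symmetric positive definite and $\Kc_1\geq 1$ is coercive. By a Schur complement argument, $\C_{A,B}\succ 0$ as soon as $B^2\Q^{-1} - A^2\Gammabf_{2,1}\Gammabf_{2,1}^{\trans}\succ 0$, which holds for $B$ sufficiently large relative to $A$. The cross term $2\langle\p,\q\rangle$ is controlled by $\epsilon\|\p\|_2^2 + \epsilon^{-1}\|\q\|_2^2$; using item \ref{as:potential:it:2} of \cref{as:potential:unbounded} to absorb $\|\q\|_2^2$ into $V(\q)$, and item \ref{as:potential:it:1} to bound $V$ below by $u_{\min}$, one adjusts $A,B$ and the additive constant so that $\Kc_1\geq 1$ and $\Kc_1(\x)\to\infty$ as $\|\x\|\to\infty$ with $\Kc_1(\x)=\bT(\|\x\|_2^2)$.

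Next I would compute $(\Lc_H+\Lc_O)\Kc_1$ term by term. The Hamiltonian part gives, after using assumption \ref{as:potential:it:0},
\begin{equation*}
\Lc_H\!\left[ \|\q\|_2^2 + 2\langle\p,\q\rangle + BD\,V(\q) \right]
 = 2\|\p\|_2^2 + 2\langle\p,\q\rangle + 2\langle\q,\bm F(\q)\rangle + BD\,\langle\p,\nabla V(\q)\rangle,
\end{equation*}
and combining with item \ref{as:potential:it:2} of \cref{as:potential:unbounded} yields a coercive $-2D\,V(\q) - 2E\|\q\|_2^2$ contribution modulo lower order terms. The OU part gives
\begin{equation*}
\Lc_O(\z^{\trans}\C_{A,B}\z) = -\z^{\trans}(\Gammabf^{\trans}\C_{A,B}+\C_{A,B}\Gammabf)\z + \beta^{-1}\mathrm{tr}(\C_{A,B}\Sigmabf\Sigmabf^{\trans}),
\end{equation*}
together with $\Lc_O(2\langle\p,\q\rangle) = -2\langle\q,\Gammabf_{1,1}\p+\Gammabf_{1,2}\s\rangle$. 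Under condition \ref{item:posdef:GammaQ} combined with either $\Gammabf_{1,1}\succ 0$ (case (iii)) or \cref{as:potential:unbounded:2} (case (iv)), a short linear algebra computation shows that $\Gammabf^{\trans}\C_{A,B} + \C_{A,B}\Gammabf$ is positive definite for the chosen $A,B$. In case (iii) the dissipation in $\p$ comes directly from $\Gammabf_{1,1}$; in case (iv), where $\Gammabf_{1,1}=\bf 0$, the off-diagonal block $A\Gammabf_{2,1}^{\trans}$ of $\C_{A,B}$ transfers the dissipation from $\s$ to $\p$ (the same hypocoercive mechanism as in \cite{Mattingly2002,Ottobre2011}), while the at-most-linear growth of $\bm F$ supplied by \cref{rem:on:as:potential:unbounded:2} ensures the $2B\langle\p,\bm F\rangle$ term can be absorbed via Young's inequality into $-2E\|\q\|_2^2 - c\|\p\|_2^2$. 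Tuning an auxiliary $\epsilon>0$ to control the remaining cross terms produces $(\Lc_H+\Lc_O)\Kc_1 \leq -\widetilde a_1 \Kc_1 + \widetilde b_1$ with $\widetilde a_1>0$.

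Finally I would extend to $l>1$ verbatim as in \cref{prop:lya:torus}: the identity
\begin{equation*}
\Lcgq\Kc_l = l\,\Kc_1^{l-1}\,\Lcgq\Kc_1 + l(l-1)\,\Kc_1^{l-2}\,\beta^{-1}\left(\nabla_{\z}\Kc_1\right)^{\trans}\Sigmabf\Sigmabf^{\trans}\nabla_{\z}\Kc_1,
\end{equation*}
plus the fact that $|\nabla_{\z}\Kc_1|^2 = \bT(\Kc_1)$ (since $\Kc_1$ is a quadratic-plus-$V$ expression and the $\z$-dependence is purely quadratic), gives $\Lcgq\Kc_l \leq -\widetilde a_l \Kc_l + \widetilde b_l$ after another $\epsilon$-absorption step. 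The main obstacle is the delicate tuning of $A,B,\epsilon$ in case (iv): one must simultaneously ensure positive definiteness of $\C_{A,B}$, the symmetrised dissipation matrix $\Gammabf^{\trans}\C_{A,B}+\C_{A,B}\Gammabf$, and domination of the $\q$--$\s$ and $\p$--$\bm F$ cross terms by the coercive contributions from $V$. This linear-algebraic balancing, which replaces the simpler constant-coefficient argument available on the torus, is where the assumption $\bm F = \bm F_1 + {\bm H}\q$ with ${\bm H}\succ 0$ is essential.
\qed
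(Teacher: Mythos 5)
Your proposal is correct and follows essentially the same route as the paper's proof: positive definiteness of the (extended) quadratic form via Schur complements, coercivity in $\q$ from \cref{as:potential:unbounded} together with the cross term $2\langle\p,\q\rangle$, a case split in which $\Gammabf_{1,1}\succ 0$ gives direct momentum dissipation while for $\Gammabf_{1,1}=\0$ the off-diagonal block $A\Gammabf_{2,1}^{\trans}$ transfers dissipation from $\s$ to $\p$ and the linear growth of ${\bm F}$ from \cref{as:potential:unbounded:2} (via \cref{rem:on:as:potential:unbounded:2}) controls the force cross terms, and finally the standard power bootstrap for $l>1$. The paper merely packages the $l=1$ computation as a single quadratic form $-\x^{\trans}\widehat{\bm R}_{A,B}\x$ whose symmetric part is shown positive definite by successive Schur complements, which is a cosmetic rather than substantive difference from your term-by-term bookkeeping.
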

\begin{proof}
Rewriting $\mathcal{K}_{l}$ as 
\[
\mathcal{K}_{l} (\q,\p,\s) = \left ( \x^{\trans} \hat{\C}_{A,B} \x + B D( V(\q) -  u_{\min} ) + 1 \right )^{l}, ~ l \in \mathbb{N},
\]
where 
\[
\hat{\C}_{A,B} =
\begin{pmatrix}
\I_{n} & \I_{n} & \0 \\
\I_{n} & B \I_{n} & A\Gammabf_{2,1}^{\trans} \\
\0 & A\Gammabf_{2,1} & B \Q^{-1}   
\end{pmatrix}\in \RR^{(n+m)\times(n+m)},
\]
we find by successive application of \cref{lem:schur:posdef}, that for any $A' \geq 0$ there exists $B'>0$ so that for $A=A'$ and $B \geq B'$ the matrix $\hat{\C}_{A,B}$ is positive definite and thus $\mathcal{K}_{l}\geq 1$ and $\mathcal{K}_{l}(\x) \rightarrow \infty$ as $\norm{\x} \rightarrow \infty$.
We first consider the case $l=1$. Define
\begin{equation}\label{eq:def:G:matrix}
{\bm G} := 
\begin{pmatrix}
\0 & -\I_{n} & \0 \\
 \I_{n} & \Gammabf_{1,1} & \Gammabf_{1,2} \\
\0 & \Gammabf_{2,1} & \Gammabf_{2,2}   
\end{pmatrix} \in \mathbb{R}^{(2n+m) \times (2n+m)},
\end{equation}
and
\[
\tilde{\Q} := 
\begin{pmatrix}
\I_{n} & \0 \\
\0 & \Q
\end{pmatrix},
\]
we find
\begin{equation*}
\begin{aligned}
\Lcgq\Kc_{1}(\x)  =& -
(
-\left [ {\bm F}(\q) \right ]^{\trans},  \p^{\trans},  \s^{\trans}
)
\G^{\trans} 
\hat{\C}_{A,B}
\x
+ DB \I_{n}\p \cdot \nabla_{\q} V(\q)\\
&+ \frac{\beta^{-1}}{2}\nabla_{\z} \cdot \left ( \Sigmabf \Sigmabf^{\trans} \nabla_{\z} (\z \tilde{\Q}^{-1}\z) \right ),
\end{aligned}
\end{equation*}
with
\[
\G^{\trans} 
\C 
=
-
\left(
\begin{array}{ccc}
 \I_{n} & B\I_{n} & \Gammabf_{2,1} \\
-\I_{n} + \Gammabf _{1,1} & - \I_{n} + B \Gammabf _{1,1} + A\Gammabf_{2,1}^{\trans}\Gammabf_{2,1}&  B\Q^{-1} \Gammabf _{2,1}^{\trans} \\
  \Gammabf _{1,2}^{\trans} & \Gammabf_{2,1}\Gammabf_{2,2}+ B \Gammabf _{1,2}^{\trans}& \Gammabf_{2,1}\Gammabf_{1,2} + B\Q^{-1} \Gammabf^{\trans} _{2,2} \\
\end{array}
\right)
.
\]
Hence, by virtue of \eqref{cond:potential} and \cref{as:potential:unbounded} \ref{as:potential:it:0},
\begin{equation}\label{eq:unbound:quadratic:form}
\begin{aligned}
&\Lc_{GLE}\Kc_{1}(\x) \leq\\
&\\
&
-\x^{\trans}
\underbrace{
\left(
\begin{array}{ccc}
 E\I_{n} & \0 & \0 \\
(-\I_{n} + \Gammabf _{1,1}) & - \I_{n} + B \Gammabf _{1,1} + A\Gammabf_{2,1}^{\trans}\Gammabf_{2,1} &  B\Q^{-1} \Gammabf _{2,1}^{\trans} \\
  \Gammabf _{1,2}^{\trans} & A\Gammabf_{2,1}\Gammabf_{2,2}+ B \Gammabf _{1,2}^{\trans} & A\Gammabf_{2,1}\Gammabf_{1,2} + B\Q^{-1} \Gammabf^{\trans} _{2,2} \\
\end{array}
\right)
}
_\textrm{{$=: \widehat{\bm R}_{A,B}$}}
\x\\
&\\
& - A \nabla_{\q}V(\q)^{\trans}\Gammabf_{2,1}^{\trans}\s
+ F
+ \frac{\beta^{-1}}{2}  \sum_{i,j}[ \tilde{\Q}^{-1} \Sigmabf \Sigmabf ^{\trans} \tilde{\Q}^{-1} ]_{i,j}.
\end{aligned}
\end{equation}
In order to show the existence of constants $a_{1}$ and $b_{1}$ such that the respective Lyapunov inequality satisfied, one needs to show that the right hand side of the above inequality \cref{eq:unbound:quadratic:form} can be bounded from above by a negative definite quadratic form. 
\paragraph{Case ${\rm rank}(\Gammabf_{1,1})=n$:} Let $A=0$.
In this case it is sufficient to show that the symmetric part
\[
\widehat{\bm R}^{S}_{A,B} = \frac{1}{2} \left ( \widehat{\bm R}_{A,B}+\widehat{\bm R}_{A,B}^{\trans} \right ) 
\]
 of $\widehat{\bm R}_{A,B}$ is positive definite.
 The lower right block 
\[
\left  [\widehat{\bm R}^{S}_{A,B} \right ]_{{(n+1): (2n+m)},{(n+1): (2n+m)}} = -\I_{n} + \frac{B}{2} \left (  \Gammabf  \tilde{\Q} + \tilde{\Q} \Gammabf^{\trans} \right ) \in \RR^{(n+m)\times(n+m)},
\]
of $\widehat{\bm R}^{s}_{0,B}$ is positive definite for sufficiently large $B>0$. In particular 
\[
\min \sigma \left (\left  [\widehat{\bm R}^{S}_{A,B} \right ]_{{(n+1): (2n+m)},{(n+1): (2n+m)}} \right ) = \bigO(B),
\]
as $B\rightarrow \infty$. Thus, by virtue of \cref{lem:schur:posdef} for $E>0$ there is a $B'>0$ such that $\widehat{\bm R}^{s}_{0,B}$ is indeed positive definite for all $B\geq B^{'}$.

\paragraph{Case $\Gammabf_{1,1} = \0$:} 
If \cref{as:potential:unbounded:2} holds, then by \cref{rem:on:as:potential:unbounded:2} this implies that there are values $ \overline{H}>0$ and $ \overline{h}\in \RR$ so that
\[
\abs{ \langle {\bm g}, {\bm F}(\q)\rangle } \leq   \overline{H}\abs{\langle {\bm g},\q \rangle} + \overline{h}.
\]
Therefore, it is sufficient to show that there are constants $A,B,E$ so that the function
\begin{equation}\label{eq:max:quadratic}
\begin{aligned}
\varphi(\x) &= \max \left ( - \x^{\trans} \widehat{\bm R}_{A,B}\x - A \overline{H} \q^{\trans}\Gammabf_{2,1}^{\trans}\s, ~ - \x^{\trans} \widehat{\bm R}_{A,B}\x + A\overline{H} \q^{\trans}\Gammabf_{2,1}^{\trans}\s \right )\\
&= \max_{i=1,2} -\x^{\trans}\tilde{\bm R}^{(i)}_{A,B,E} \x,
\end{aligned}
\end{equation}
can be bounded from above by a negative definite quadratic form. This means that we have to show that for suitable constants $A,B,E>0$ the symmetric part of the 
matrix 
\[
\tilde{\bm R}^{(i)}_{A,B,E} = 
\left(
\begin{array}{ccc}
 E\I_{n} & \0 & (-1)^{i} A \overline{H}\Gammabf_{2,1}^{\trans}\ \\
-\I_{n}  & - \I_{n} + A\Gammabf_{2,1}^{\trans}\Gammabf_{2,1} & \0 \\
  \Gammabf _{1,2}^{\trans} & A\Gammabf_{2,1}\Gammabf_{2,2}& A\Gammabf_{2,1}\Gammabf_{1,2} + B\Q^{-1} \Gammabf^{\trans} _{2,2} \\
\end{array}
\right),
\]
is positive definite for $i \in \{0,1\}$. (Note that we used $\Gammabf _{1,2}^{\trans}-\Q^{-1} \Gammabf _{2,1}=\0$ in the derivation of the form of  $\tilde{\bm R}^{i}_{A,B}$.)
Since $\Gammabf_{2,1}^{\trans}\Gammabf_{2,1}$ is positive definite we can choose $A$ sufficiently large so that $- \I_{n} + A\Gammabf_{2,1}^{\trans}\Gammabf_{2,1} $ is positive definite. The positive definiteness of the symmetric part of $\tilde{\bm R}^{(i)}_{A,B,E}$, $i \in \{ 0,1\}$ follows  for sufficiently large $B>0$ and $E>0$ by successive application of \cref{lem:schur:posdef}.\\

For $l>1$ we find:
\begin{equation}
\begin{aligned}
(\mathcal{L}_{H} + \mathcal{L}_{O})\mathcal{K}_{l}(\x) =& ~ l \mathcal{K}_{l-1}(\x)\mathcal{L}_{H}\mathcal{K}_{1}(\x) + l \mathcal{K}_{l-1}(\x)(-\Gammabf^{\trans} \z \cdot \nabla_{\z} \mathcal{K}_{1}(\x))\\
 &+ l \frac{\beta^{-1}}{2} \nabla_{\z } \cdot \left ( \Sigmabf \Sigmabf^{\trans} \nabla_{\z} \mathcal{K}_{1}(\x)\mathcal{K}_{l-1}(\x) \right ) \\ 
=& ~
- l \mathcal{K}_{l-1}(\x)( \z^{\trans}\Gammabf^{\trans}\tilde{\Q}\z) + l \beta^{-1}\sum_{i,j}\left[\Sigmabf \Sigmabf^{\trans}\tilde{\Q} \right]_{i,j} \mathcal{K}_{l-1}(\x)\\
&  ~ 
+ 2 l (l-1) \beta^{-1}\z^{\trans}\tilde{\Q}\Sigmabf\Sigmabf^{\trans} \tilde{\Q} \z \mathcal{K}_{l-2}(\x)\\
\leq & ~
 - l \mathcal{K}_{l-1}(\x) \left( (\Lc_{H}+\Lc_{O}) \Kc_{1}(\x)  + c_{2} \right )  \\
\leq & ~
l \mathcal{K}_{l-1}(\x) \left (  -a_{1} \mathcal{K}_{1}(\x) + b_{1} + c_{2} \right ) \\
\leq & ~
l \left ( -a_{1}  \mathcal{K}_{l}(\x) + \frac{b_{1}+c_{2}}{\epsilon_{l}^{l-1}}  + \epsilon_{l}\mathcal{K}_{l} \right )
=  -a_{l} \mathcal{K}_{l}(\x) + b_{l},
\end{aligned}
\end{equation}
with
\[
c_{2} =  - \beta^{-1}\sum_{i,j}\left[\tilde{\Q}\Sigmabf \Sigmabf^{\trans}\tilde{\Q} \right]_{i,j} +  \beta^{-1}\sum_{i,j}\left[\Sigmabf \Sigmabf^{\trans}\tilde{\Q} \right]_{i,j} 
\]
and
\[
a_{l} := l(a_{1}-\epsilon_{l}), ~
b_{l} :=   l \frac{b_{1}+c_{2}}{\epsilon_{l}^{l-1}}
\]
where $\epsilon_{l} > 0$ sufficiently small so that $a_{l}>0$.
\qed
\qed\end{proof}
We mention that \cref{as:potential:unbounded} is commonly also required for the construction of suitable Lyapunov functions in the case of the underdamped Langevin equation if $\qDomain$ is unbounded. \cref{as:potential:unbounded:2}  is an additional constraint on the potential function $U$, which is not required in the case of the underdamped Langevin equation. It is therefore not surprising that this assumption can be dropped if the noise process $\etabf$ in the GLE contains a nondegenerate white noise component.\\

If $\Sigmabf$ has full rank the minorization can be demonstrated using a simple control argument.
\begin{lemma}\label{prop:gle:minorization:3}
Let $\qDomain =\RR^{n}$. 
If ${\rm rank}(\Sigmabf) = n+m$, then \cref{eq:gle:q} satisfies a minorization condition (\cref{as:minorization:inf}). 
\end{lemma}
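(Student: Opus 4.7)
The plan is to combine hypoellipticity with a direct Stroock-Varadhan control argument, exploiting that invertibility of $\Sigmabf$ gives us control over the $(\p,\s)$-velocities.

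First I would verify the parabolic H\"ormander condition. Since $\Sigmabf$ is invertible, the constant diffusion vectors $\diffusion_{i}=\beta^{-1/2}(\0,\Sigma_{i})^{\trans}$, $1\le i\le n+m$, already span $\{\0\}\times\RR^{n+m}$; and since the rows of the top block $\Sigmabf_{1}$ are linearly independent, the first-$n$ components of $\{\Sigma_{i}\}$ themselves span $\RR^{n}$, so a single commutator $[\diffusion_{0},\diffusion_{i}]=-\nabla_{\x}\diffusion_{0}\,\diffusion_{i}$ picks out these components via the $\M^{-1}\p$-coupling in the drift and sweeps out the remaining $\q$-directions. (This is the substance of \cref{prop:cond:hyp}\ref{item:cond:hyp:2}.) By \cref{thm:hormander}, the law $P_{t^{*}}(\x_{0},\cdot)$ of the solution of \cref{eq:gle:q} then admits a $\mathcal{C}^{\infty}$ density $p_{t^{*}}(\x_{0},\cdot)$ for every $t^{*}>0$.

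Next I would establish that $p_{t^{*}}$ is globally positive by an explicit controllability argument. Given any $\x_{0},\x_{1}\in\xDomain$ and $t^{*}>0$, I pick a smooth $\q:[0,t^{*}]\to\RR^{n}$ with $\q(0)=\q_{0}$, $\dot{\q}(0)=\p_{0}$, $\q(t^{*})=\q_{1}$, $\dot{\q}(t^{*})=\p_{1}$, set $\p:=\dot{\q}$, and choose any smooth $\s(\cdot)$ interpolating between $\s_{0}$ and $\s_{1}$. Invertibility of $\Sigmabf$ then lets me solve algebraically for a smooth control $u(t)\in\RR^{n+m}$ driving the deterministic system obtained from \cref{eq:gle:q} by replacing $\dot{\W}$ with $u$ along this trajectory. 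By the Stroock-Varadhan support theorem, $p_{t^{*}}(\x_{0},\x_{1})>0$ for all $\x_{0},\x_{1}\in\xDomain$.

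The minorization then follows by a standard compactness argument: for any compact level set $\mathcal{C}=\{\Kc_{l}\le R\}\subset\xDomain$ of the Lyapunov function of \cref{prop:lya:R} and any fixed compact set $A\subset\xDomain$ of positive Lebesgue measure, joint continuity of $p_{t^{*}}$ yields $c:=\inf_{\x_{0}\in\mathcal{C},\,y\in A}p_{t^{*}}(\x_{0},y)>0$, so that $\nu(\dd y):=c\,\mathbf{1}_{A}(y)\,\dd y$ serves as a non-trivial minorizing measure in the sense of \cref{as:minorization:inf}. The main technical point I expect to require care is that $\bm F$ need not be globally Lipschitz on $\RR^{n}$, so a strict invocation of Stroock-Varadhan in its classical form may first require smoothly truncating $\bm F$ outside a large ball containing $\mathcal{C}$; the Lyapunov estimate of \cref{prop:lya:R} controls the probability of exiting such a ball before time $t^{*}$ and lets one remove the truncation without affecting the conclusion on $\mathcal{C}$.
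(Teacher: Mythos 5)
Your proposal is correct and follows essentially the same route as the paper: hypoellipticity from \cref{prop:cond:hyp}~(ii), an explicit algebraic solution of the control problem using the invertibility of $\Sigmabf$ along smooth interpolating paths for $\q$ and $\s$, and then minorization on a compact level set. The only difference is cosmetic — the paper delegates the final step (continuous positive density plus controllability implies minorization) to \cref{lem:HiMaSt}, whereas you unfold the compactness argument and add a sensible caveat about truncating ${\bm F}$, which the paper leaves implicit.
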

\begin{proof}
Note that by \cref{prop:cond:hyp}, \ref{item:cond:hyp:2} ${\rm rank}(\Sigmabf) = n+m$ immediately implies that the SDE satisfies the parabolic H\"ormander condition. Since $\Sigmabf$ is invertible, we can easily solve the associated control problem which then by \cref{lem:HiMaSt} implies that a minorization condition is satisfied. The proof  of the existence of a suitable control is essentially the same as in the case of the under-damped Langevin equation (see e.g. \cite{Mattingly2002}): Let $T>0$ and $(\q^{-},\p^{-},\s^{-}),(\q^{+},\p^{+},\s^{+}) \in \RR^{2n+m}$. We need to show that there exists $u \in L^{1}([0,T], \RR^{m})$, solving the control problem
\begin{equation}\label{eq:control:2nd:order}
\begin{aligned}
\dot{\q} & =  \p,\\
\dot{\p} & =  {\bm F}(\q)  - \Gammabf_{1,1} \p + \Gammabf_{1,2} \s +  \Sigmabf_{1} {\bm u},\\
\dot{\s} & =   - \Gammabf_{2,1} \p + \Gammabf_{2,2} \s +  \Sigmabf_{2} {\bm u},\\
\end{aligned}
\end{equation}
subject to 
\[
(\q(0),\p(0),\s(0))= (\q^{-},\p^{-},\s^{-}),~(\q(T),\p(T),\s(T))= (\q^{+},\p^{+},\s^{+}).
\]
It is easy to verify that there exists a smooth path $\tilde\q \in\mathcal{C}^{2}([0,T], \RR^{n})$ and $\tilde\s \in\mathcal{C}^{2}([0,T], \RR^{m})$  such that 
\[
(\tilde\q(0),\dot{\tilde\q}(0))= (\q^{-},\p^{-}),~(\tilde\q(T),\dot{\tilde{\q}}(T))= (\q^{+},\p^{+}),
\]
and
\[
\tilde{\s}(0) = \s^{-}, ~ \tilde{\s}(T) = \s^{+}.
\]
Rewrite \eqref{eq:control:2nd:order} as a second order differential equation in $\q$ and $\s$:
\[
\begin{aligned}
\ddot{\q} &= - \nabla_{\q}U(\q) - \Gammabf_{1,2}\dot{\q} -\Gammabf_{1,2} \s +\Sigmabf_{1} {\bf u},\\
\dot{\s} &= -  \Gammabf_{2,1}\dot{\q} - \Gammabf_{2,2}\s +\Sigmabf_{2} {\bf u},
\end{aligned}
\]
thus,
\begin{equation}
{\bf u}(t) =\Sigmabf^{-1}  \begin{pmatrix} \ddot{\tilde\q}(t) + \nabla_{\q}U(\tilde\q(t)) + \Gammabf_{1,1}\dot{\tilde\q}(t)+ \Gammabf_{1,2}\tilde{\s}(t) \\
\dot{\tilde{\s}}(t)  +  \Gammabf_{2,1}\dot{\tilde{\q}}(t) +\Gammabf_{2,2}\tilde{\s}(t)
\end{pmatrix},
\end{equation}
is a solution of \eqref{eq:control:2nd:order}.
\qed\end{proof}

The following \cref{prop:minorization:R-2} shows that the minorization condition is satisfied in the case of a GLE with unbounded configurational domain and $\Gammabf_{1,1}=\0$. 
\begin{lemma}\label{prop:minorization:R-2}
Under the same conditions as \cref{thm:ergo:unbounded:3} it follows that \cref{as:minorization:inf}  is satisfied for \cref{eq:gle:q}. 
\end{lemma}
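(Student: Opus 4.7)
The plan is to invoke the abstract minorization lemma \cref{lem:HiMaSt}, which reduces the task to (a) hypoellipticity of the generator---already guaranteed by the standing parabolic H\"ormander assumption of \cref{thm:ergo:unbounded:3}---and (b) exact controllability of the associated deterministic control system. Since $\Gammabf_{1,1}=\0$ forces $\Sigmabf_{1}=\0$ (by \cref{lem:purecolor}), the parabolic H\"ormander condition in this degenerate setup can only hold if $\Gammabf_{1,2}$ has rank $n$ and $\Sigmabf_{2}$ has rank $m$, as is apparent from the bracket computations underlying \cref{prop:cond:hyp}~\ref{item:cond:hyp:3}. These rank properties will be available for free throughout the construction.

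For the controllability step, given $T>0$ and boundary values $(\q^{\pm},\p^{\pm},\s^{\pm})\in\RR^{2n+m}$, I would build an admissible control ${\bm u}\in L^{2}([0,T];\RR^{n+m})$ in three stages. First, pick a Hermite interpolant $\tilde{\q}\in\mathcal{C}^{3}([0,T];\RR^{n})$ (quintic in each component) matching the six endpoint conditions
\begin{equation*}
\tilde{\q}(0)=\q^{-},\ \dot{\tilde{\q}}(0)=\p^{-},\ \ddot{\tilde{\q}}(0)={\bm F}(\q^{-})+\Gammabf_{1,2}\s^{-},
\end{equation*}
together with the analogous data at $t=T$. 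Second, lift the algebraic constraint $\Gammabf_{1,2}\tilde{\s}(t)=\ddot{\tilde{\q}}(t)-{\bm F}(\tilde{\q}(t))$ to a path $\tilde{\s}\in\mathcal{C}^{1}([0,T];\RR^{m})$ with $\tilde{\s}(0)=\s^{-},\tilde{\s}(T)=\s^{+}$: a particular solution is given by $\tilde{\s}_{0}(t)=\Gammabf_{1,2}^{\trans}(\Gammabf_{1,2}\Gammabf_{1,2}^{\trans})^{-1}(\ddot{\tilde{\q}}(t)-{\bm F}(\tilde{\q}(t)))$, and the residuals $\s^{\pm}-\tilde{\s}_{0}(0/T)$ lie in $\ker\Gammabf_{1,2}$ by the choice of $\ddot{\tilde{\q}}(0),\ddot{\tilde{\q}}(T)$ above, so a linear interpolation inside the kernel corrects the endpoints. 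Third, read off the control from the $\s$-equation,
\begin{equation*}
{\bm u}(t)=\Sigmabf_{2}^{\trans}(\Sigmabf_{2}\Sigmabf_{2}^{\trans})^{-1}\bigl(\dot{\tilde{\s}}(t)+\Gammabf_{2,1}\dot{\tilde{\q}}(t)+\Gammabf_{2,2}\tilde{\s}(t)\bigr),
\end{equation*}
where $(\Sigmabf_{2}\Sigmabf_{2}^{\trans})^{-1}=(\Gammabf_{2,2}\Q+\Q\Gammabf_{2,2}^{\trans})^{-1}$ exists by the stability of $-\Gammabf$ and \cref{as:gle:q}. By construction, $(\tilde{\q},\dot{\tilde{\q}},\tilde{\s})$ then solves the controlled ODE with the prescribed endpoints.

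Combining exact controllability with the hypoelliptic smoothing of the transition density (\cref{thm:hormander}), the Stroock--Varadhan support theorem yields that, for any $T>0$, the law $\mu_{x^{-},T}$ of $\x(T)$ started from $x^{-}$ has a density $\rho_{x^{-},T}$ that is strictly positive at every prescribed target $x^{+}$; a standard compactness argument on $(x^{-},x^{+})\in \mathcal{C}_{r}\times\mathcal{C}_{r}$ and minimisation $\rho(x):=\min_{x^{-}\in\mathcal{C}_{r}}\rho_{x^{-},T}(x)$---exactly as in the proof of \cref{thm:minorization:torus}---produces the minorant required by \cref{as:minorization:inf}. The main obstacle I anticipate is the lift in step~2 when $m>n$: one must simultaneously respect the affine fibre constraint $\Gammabf_{1,2}\tilde{\s}=\ddot{\tilde{\q}}-{\bm F}(\tilde{\q})$ and the endpoint values $\tilde{\s}(0)=\s^{-},\tilde{\s}(T)=\s^{+}$, which is only feasible because the construction of $\ddot{\tilde{\q}}$ in step~1 was arranged to make the endpoint residuals lie in $\ker\Gammabf_{1,2}$; this compatibility is precisely where the rank-$n$ property of $\Gammabf_{1,2}$ (equivalently, of $\Gammabf_{2,1}$, via $\Gammabf_{1,2}\Q=-\Gammabf_{2,1}^{\trans}$) is essential. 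The remainder of the argument is a routine repetition of \cref{prop:gle:minorization:3}.
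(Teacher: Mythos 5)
Your route is genuinely different from the paper's. You establish the minorization through \cref{lem:HiMaSt}, i.e.\ hypoellipticity plus an explicit solution of the control problem, thereby extending the strategy of \cref{prop:gle:minorization:3} to the degenerate case $\Sigmabf_{1}=\0$: you interpolate $\q$ to second order at the endpoints so that the fibre constraint $\Gammabf_{1,2}\tilde{\s}=\pm(\ddot{\tilde{\q}}-{\bm F}(\tilde{\q}))$ is compatible with the prescribed $\s^{\pm}$, lift it with the pseudo-inverse of $\Gammabf_{1,2}$ plus a correction in $\ker\Gammabf_{1,2}$, and read the control off the $\s$-equation through the right pseudo-inverse of $\Sigmabf_{2}$. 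The paper instead follows the Girsanov strategy announced in \cref{sec:stochastic:analysis}: using \cref{as:potential:unbounded:2} it splits ${\bm F}$ into a bounded part plus a linear part, observes that the linear SDE obtained by discarding the bounded part is Gaussian with nonsingular covariance (hypoellipticity via \cref{prop:cond:hyp}, \ref{item:cond:hyp:3}), hence has full support, transfers full support to \cref{eq:gle:q} by a change of measure whose Novikov condition is verified with the exponential Lyapunov function of \cref{lem:lya:exp:gle:nonconst:coeff:2}, and then builds the minorizing measure by minimizing the densities over a compact set of initial data exactly as in \cref{thm:minorization:torus}. Your argument buys two things: it does not use \cref{as:potential:unbounded:2} at all (the control works for any smooth ${\bm F}$; that assumption remains needed only for the Lyapunov function of \cref{prop:lya:R}), and it dispenses with the Girsanov/Novikov machinery. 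What it gives up is robustness to losing the rank conditions, which is precisely the reason the appendix cites for preferring Girsanov when a control is hard to construct.

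One caveat: your claim that the parabolic H\"ormander condition automatically forces ${\rm rank}(\Gammabf_{1,2})=n$ and ${\rm rank}(\Sigmabf_{2})=m$ is only half right. With ${\bm F}\equiv\0$ the $\p$- (hence $\q$-) directions in the bracket hierarchy can only be produced through $\Gammabf_{1,2}$ acting on $\s$-directions, so ${\rm rank}(\Gammabf_{1,2})=n$ is indeed necessary. But the $\s$-directions can be generated from the range of $\Sigmabf_{2}$ by repeated application of $\Gammabf_{2,2}$ and $\Gammabf_{2,1}\Gammabf_{1,2}$, so the bracket condition can hold with ${\rm rank}(\Sigmabf_{2})<m$; surjectivity of $\Sigmabf_{2}$, which your last step needs, is therefore an additional hypothesis rather than a consequence. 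The paper's own proof leans on the same condition (it invokes \cref{prop:cond:hyp}, \ref{item:cond:hyp:3} and inverts $\Sigmabf_{2}$), so this is an implicit assumption shared by both arguments; you should simply state it as such rather than derive it.
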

\begin{proof}
By \cref{as:potential:unbounded:2} the force ${\bm F}$ can be decomposed as
\[
{\bm F}(\q) = {\bm F}_{1}(\q) + {\bm F}_{2}(\q),
\]
where  $\norm{{\bm F}_{1}(\q)}_{\infty}$ is uniformly bounded in $\q\in\RR$ and
\[
{\bm F}_{2}(\q) = {\bm H} \q,
\]
with  ${\bm H}\in \RR^{n\times n}$ being a positive definite matrix. Consider the dynamics 
\begin{equation}
\begin{aligned}\label{gle:tilde}
\dot{\q}^{a} &= \p^{a},\\
\dot{\p}^{a} & =  - {\bm H}\q^{a} -  \Gammabf_{1,2}\s^{a},\\
\dot{{\bm g}}^{a} &= -\Gammabf_{2,1}\p^{a} - \Gammabf_{2,2}{\s}^{a}  + \frac{\beta^{-1}}{2} \Sigmabf_{2}\dot{\W},\\
&\text{with} ~~ (\q^{a}(0), \p^{a}(0), \s^{a}(0) ) =\x_{0},
\end{aligned}
\end{equation}
where $\x_{0}\in \RR^{2n+m}$. The solution of \cref{gle:tilde} is Gaussian hence 
\[
\mu^{a}_{t}(\dd \x) = \mathcal{N}(\dd \x; {\bm \mu}_{t}, \mathscr{V}_{t}),
\]
where ${\bm \mu}_{t}\in \RR^{2n+m}$ and $ \mathscr{V}_{t} \in \RR^{(2n+m)\times(2n+m)}$. Moreover,
by \cref{prop:cond:hyp}, \ref{item:cond:hyp:3}, the SDE \cref{gle:tilde} is hypoelliptic, hence $\mathscr{V}_{t}$ is non-singular for all $t>0$. As a consequence  
\[
\text{supp}(\mu^{a}_{t}) = \xDomain
\]
for all $t>0$.
Moreover, we notice that
\[
{\bm F}_{1}(\q) = {\bm u}(\q) \Sigmabf_{2},
\]
with
\[
{\bm u}(\q)  =-{\bm F}_{1}(\q) \I_{n,m} \Sigmabf_{2}^{-1},
\]
where
\[
\I_{n,m}= \left ( \I_{n}, \0 \right ) \in \RR^{n \times m}.
\]
 Using \cref{lem:lya:exp:gle:nonconst:coeff} it follows by the same chain of arguments as in the proof of \cref{lem:novikov:cond}, that ${\bm u}$ satisfies Novikov's condition and by virtue of Girsanov's theorem the support of the law $\mu_{t}$ of the solution of  \cref{eq:gle:q} with initial condition $\x(0) = \x_{0}$ coincides with the law of $\mu^{a}_{\x_{0},t}$, i.e., $\text{supp}(\mu_{t})=\xDomain$.  Let $\mu_{\x_{0},t}(\dd \x) = \rho_{\x_{0},t}(\x)\dd \x$. As in the proof of \cref{thm:minorization:torus} we can construct a minoring measure $\eta(\dd x) = \rho( x) \dd x,$ as 
\[
\rho(x) := \min_{\x_{0} \in \mathcal{C}_{r}} \rho_{x_{0},t} (x).
\] 
where $\mathcal{C}_{r}\subset \RR^{2n+m}$ is a sufficiently large compact set.
\qed\end{proof}
\cref{lem:lya:exp:gle:nonconst:coeff} allows to conclude that Novikov's condition is satisfied under the assumptions of the preceding \cref{prop:minorization:R-2}.
\begin{lemma}\label{lem:lya:exp:gle:nonconst:coeff:2}
Let
\[
\widehat{\Kc}_{\theta}(\x) = e^{\frac{\theta}{2} \Kc_{l}(\x)}, ~ l=1, 
\]
with $\Kc_{1}$ as defined in \eqref{eq:lyapunov:def}. Under the same conditions as in \cref{prop:lya:R}, and provided that \cref{as:lyapunov:inf} holds for $\Lc=\Lcgq$, $\Kc=\Kc_{1}$, then also $\widehat{\Kc}_{\theta}$ satisfies \cref{as:lyapunov:inf} for $\Lc = \Lcgq$ and sufficiently small $\theta>0$.
\begin{proof}
A simple calculation shows
\[
\Lcgq \widehat{\Kc}_{\theta}(\x) =   \left (\theta\Lcgq  \Kc_{1} (\x) + \frac{\beta^{-1}}{2} \Big (  \theta \sum_{i,j}\Big [(\tilde{\Q} -\I_{n+m} )\widetilde{\C} \Big ]_{i,j} +  \theta^{2} \z^{\trans} \widetilde{\C}\z \Big ) \right ) \widehat{\Kc}_{\theta}(\x),
\]
with
\[
\widetilde{\C}=\tilde{\Q}^{-1} \Sigmabf \Sigmabf ^{\trans} \tilde{\Q}^{-1}.
\]
From \cref{prop:lya:R} we know $\Lcgq  \Kc_{1}(\x) = \bT\left(-\norm{\x}^{2}\right)$, thus
\[
\begin{aligned}
\Lcgq \widehat{\Kc}_{\theta}(\x) & =  \left ( - \bT \left (\theta \norm{\x}^{2} \right ) + \bT\left ((1+\theta)  \norm{\z} \right ) + \bT \left (\theta^{2} \norm{\z}^{2} \right) \right )\Kc_{\theta}(\x), \\
\end{aligned}
\]
thus for sufficiently small $\theta>0$ and suitable $b\in\RR$,
\[
\Lcgq \widehat{\Kc}_{\theta}(\x) <  - \widehat{\Kc}_{\theta}(\x) + b.
\]
\qed\end{proof}
\end{lemma}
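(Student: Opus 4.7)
My plan is a direct computation based on the chain rule for the second-order generator $\Lcgq$ applied to an exponential, combined with the asymptotic bounds on $\Lcgq \Kc_1$ already established in \cref{prop:lya:R}. The dissipation for $\widehat{\Kc}_\theta$ will come out scaling like $\theta$, while the unavoidable Itô correction scales like $\theta^2$, so taking $\theta$ small will do the job.

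\medskip

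\noindent\textbf{Step 1 (Chain rule).} Writing $\widehat{\Kc}_\theta = \exp\bigl(\tfrac{\theta}{2}\Kc_1\bigr)$, for any smooth $g$ one has $\partial_i e^{\tfrac{\theta}{2} g} = \tfrac{\theta}{2}(\partial_i g) e^{\tfrac{\theta}{2} g}$ and $\partial_i\partial_j e^{\tfrac{\theta}{2} g} = \bigl[\tfrac{\theta}{2}\partial_i\partial_j g + (\tfrac{\theta}{2})^2 \partial_i g\,\partial_j g\bigr] e^{\tfrac{\theta}{2} g}$. Since $\Lcgq = \Lc_H+\Lc_O$ is a second-order operator with diffusion only in the $\z$ variable and diffusion matrix $\beta^{-1}\Sigmabf\Sigmabf^\trans$, this yields an identity of the form
\begin{equation*}
\Lcgq \widehat{\Kc}_\theta(\x) = \Bigl[\tfrac{\theta}{2}\,\Lcgq \Kc_1(\x) + \tfrac{\beta^{-1}}{2}\bigl(\tfrac{\theta}{2}\bigr)^{2} (\nabla_\z \Kc_1)^{\trans}\Sigmabf\Sigmabf^{\trans}(\nabla_\z \Kc_1)\Bigr] \widehat{\Kc}_\theta(\x).
\end{equation*}
(An equivalent form, obtained by absorbing the overall factor into a rescaling of $\theta$, produces the expression displayed by the author.) I would record this formula carefully since the $\theta$ versus $\theta^2$ split below is the crux.

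\medskip

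\noindent\textbf{Step 2 (Asymptotic bounds on the two terms).} From the proof of \cref{prop:lya:R} (namely the quadratic-form estimate \eqref{eq:unbound:quadratic:form} that yielded $\Lcgq \Kc_1 \le -a_1 \Kc_1 + b_1$), one reads off the sharper asymptotic behaviour $\Lcgq \Kc_1(\x) = \bT(-\|\x\|^{2})$ as $\|\x\|\to\infty$. On the other hand, $\Kc_1$ is a polynomial in $\x$ of degree two, so $\nabla_\z \Kc_1$ is linear in $(\q,\p,\s)$; in particular its $\z$-gradient is at most affine in $\z$, which gives the bound
\begin{equation*}
(\nabla_\z \Kc_1)^{\trans}\Sigmabf\Sigmabf^{\trans}(\nabla_\z \Kc_1) = \bT(\|\z\|^{2}) \quad\text{as } \|\z\|\to\infty,
\end{equation*}
with a constant independent of $\theta$. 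Thus the bracket in Step~1 is $\tfrac{\theta}{2}\,\bT(-\|\x\|^{2}) + \theta^{2}\,\bT(\|\z\|^{2}) + O(\theta\|\x\|) + O(\theta^{2})$.

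\medskip

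\noindent\textbf{Step 3 (Choose $\theta$ small).} Because $\|\z\|^{2}\le \|\x\|^{2}$ and the negative contribution in the bracket is linear in $\theta$ while the positive Itô correction is quadratic in $\theta$, there exist $\theta_0>0$, $c>0$ and $R>0$ such that for all $0<\theta\le\theta_0$,
\begin{equation*}
\tfrac{\theta}{2}\,\Lcgq \Kc_1(\x) + \tfrac{\beta^{-1}}{2}\bigl(\tfrac{\theta}{2}\bigr)^{2} (\nabla_\z \Kc_1)^{\trans}\Sigmabf\Sigmabf^{\trans}(\nabla_\z \Kc_1) \;\le\; -c, \qquad \|\x\|\ge R.
\end{equation*}
Combining with Step~1, this yields $\Lcgq \widehat{\Kc}_\theta(\x) \le -c\,\widehat{\Kc}_\theta(\x)$ outside the ball $\{\|\x\|\le R\}$; on the compact ball the continuous function $\Lcgq \widehat{\Kc}_\theta$ plus $c\,\widehat{\Kc}_\theta$ is bounded by some constant $b$. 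Together these give the desired Lyapunov inequality $\Lcgq \widehat{\Kc}_\theta \le -c\,\widehat{\Kc}_\theta + b$, verifying \cref{as:lyapunov:inf} for $\widehat{\Kc}_\theta$.

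\medskip

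\noindent\textbf{Expected main obstacle.} There is no deep obstacle; the only subtlety is the bookkeeping to make sure the $\theta^{2}\|\z\|^{2}$ term cannot swamp the $\theta\|\x\|^{2}$ dissipation. This requires exploiting that $\Kc_1$ is \emph{quadratic} (so $\nabla \Kc_1$ is linear) and that $\Lcgq\Kc_1$ is genuinely negative-definite-quadratic at infinity; both facts are available from \cref{prop:lya:R}. If $\Kc_1$ were replaced by $\Kc_l$ with $l>1$, the gradient would grow like $\|\x\|^{2l-1}$ and the $\theta^2$ Itô term would dominate, which is precisely why the statement restricts to $l=1$.
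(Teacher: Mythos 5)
Your proposal is correct and follows essentially the same route as the paper: apply the generator to $e^{\frac{\theta}{2}\Kc_1}$ via the chain rule, note that the dissipative contribution scales like $\theta\,\bT\!\left(-\norm{\x}^{2}\right)$ while the It\^o correction $(\nabla_{\z}\Kc_1)^{\trans}\Sigmabf\Sigmabf^{\trans}(\nabla_{\z}\Kc_1)$ carries a factor $\theta^{2}$, and choose $\theta$ small so the negative term dominates. The only minor imprecision is that when $\Sigmabf_{1}\neq\0$ the cross term $2\langle\p,\q\rangle$ in $\Kc_1$ makes the It\^o correction $\bT(\norm{\x}^{2})$ rather than $\bT(\norm{\z}^{2})$, but since it still enters at order $\theta^{2}$ against the order-$\theta$ negative quadratic, the argument is unaffected (and in the application of this lemma, \cref{prop:minorization:R-2}, one has $\Sigmabf_{1}=\0$ anyway).
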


\subsection{Technical lemmas required in the proofs of ergodicity of \cref{eq:gle:q} with non-stationary random force}\label{sec:lem:nonstat}
We first show that under the assumptions of \cref{thm:ergo:GLEq} a minorization condition is satisfied for \eqref{eq:gle:q}. For $r>0$ let in the following $C_{r}:= \{ (\q,\p,\s) : \norm{\p,\s}_{2}<r\}$.
\begin{lemma}\label{thm:erg:gle:nonconst:coeff}
Let $\qDomain=\mathbb{T}^{n}$ and $\Gammaq_{1,2},\Gammaq_{2,1}, \Gammaq_{2,2},\Sigmaq_{2} \in \mathcal{C}^{\infty}(\qDomain, {\rm GL}_{n}(\RR))$, such that $-\Gammaq(\q)$ is stable for all $\q \in \qDomain$. Let $r>0$ and $\x_{0}\in C_{r}$. For any $t>0$ the law $\mu^{\x_{0}}_{t} := e^{t\Lc^{\dagger}} \delta_{\x_{0}}$ of the solution $\x(t)$ of \eqref{eq:gle:q} with initial condition $\x(t) = \x_{0}$ has full support. In particular, \cref{as:minorization:inf} (minorization condition) holds.
\end{lemma}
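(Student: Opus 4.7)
The plan is first to establish that the law $\mu^{\x_{0}}_{t}$ has full support on $\xDomain$, and then to deduce the minorization condition by combining this with the smoothness of the transition density. The structural inputs I will exploit are the pointwise invertibility of $\Gammaq_{1,2}(\q),\Gammaq_{2,1}(\q),\Gammaq_{2,2}(\q),\Sigmaq_{2}(\q)$ and the compactness of $\qDomain=\mathbb{T}^{n}$, together with the fact that (under the ambient hypotheses of \cref{thm:ergo:GLEq}, where $\Gammaq_{1,1}\equiv\0$) \cref{lem:purecolor} forces $\Sigmaq_{1}\equiv\0$, so that noise enters \eqref{eq:gle:q} solely through $\beta^{-1/2}\Sigmaq_{2}(\q)\dot{\W}$ in the $\s$-equation.

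For the support step I use a control-theoretic argument made rigorous by Girsanov. Fix an arbitrary target $\x_{1}=(\q_{1},\p_{1},\s_{1})\in\xDomain$ and choose a $\mathcal{C}^{3}$ reference curve $\tilde{\q}:[0,t]\to\qDomain$ satisfying $\tilde{\q}(0)=\q_{0}$, $\dot{\tilde{\q}}(0)=\p_{0}$, $\ddot{\tilde{\q}}(0)={\bm F}(\q_{0})-\Gammaq_{1,2}(\q_{0})\s_{0}$ and the analogous conditions at time $t$ with $\x_{1}$ — a polynomial of degree five will do. Setting $\tilde{\p}:=\dot{\tilde{\q}}$ and using invertibility of $\Gammaq_{1,2}$, define
\[
\tilde{\s}(r):=\Gammaq_{1,2}^{-1}(\tilde{\q}(r))\bigl({\bm F}(\tilde{\q}(r))-\ddot{\tilde{\q}}(r)\bigr);
\]
by construction, $(\tilde{\q},\tilde{\p},\tilde{\s})$ identically satisfies the first two lines of \eqref{eq:gle:q} and interpolates $\x_{0}$ to $\x_{1}$. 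Using invertibility of $\Sigmaq_{2}$, the $\s$-equation can then be solved for the driving control
\[
\bu(r):=\beta^{1/2}\,\Sigmaq_{2}^{-1}(\tilde{\q}(r))\bigl(\dot{\tilde{\s}}(r)+\Gammaq_{2,1}(\tilde{\q}(r))\tilde{\p}(r)+\Gammaq_{2,2}(\tilde{\q}(r))\tilde{\s}(r)\bigr),
\]
which is deterministic, continuous on $[0,t]$, and hence uniformly bounded.

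Now define the shifted probability measure
\[
\frac{\dd\tilde{\PP}}{\dd\PP}:=\exp\Bigl(\int_{0}^{t}\bu(r)^{\trans}\dd\W(r)-\tfrac{1}{2}\int_{0}^{t}\|\bu(r)\|^{2}\dd r\Bigr);
\]
Novikov's condition holds trivially because $\bu$ is deterministic and bounded, so $\tilde{\PP}\sim\PP$. Under $\tilde{\PP}$ the process $\tilde{\W}(r):=\W(r)-\int_{0}^{r}\bu(\tau)\dd\tau$ is a standard Brownian motion, and $(\q,\p,\s)$ solves \eqref{eq:gle:q} with $\W$ replaced by $\tilde{\W}$ and with the additional drift $\beta^{-1/2}\Sigmaq_{2}(\q)\bu$ added to the $\s$-equation. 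The noise-free trajectory of this controlled SDE (formally $\tilde{\W}\equiv\0$) is exactly $(\tilde{\q},\tilde{\p},\tilde{\s})$, which ends at $\x_{1}$. Continuous dependence of the SDE solution map on the driving Brownian path, valid on the compact set where the deterministic trajectory lives, yields $\tilde{\PP}(\|\x(t)-\x_{1}\|<\epsilon)>0$ for every $\epsilon>0$; by equivalence of measures the same lower bound transports back to $\PP$. Since $\x_{1}$ was arbitrary, $\mathrm{supp}\,\mu^{\x_{0}}_{t}=\xDomain$.

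To conclude the minorization condition I observe that the parabolic H\"ormander condition is satisfied: the diffusion vector fields span $\{0\}\times\{0\}\times\RR^{n}$, a single bracket against the drift pulls in the $\p$-direction via invertibility of $\Gammaq_{1,2}$, and a further bracket pulls in the $\q$-direction. By \cref{thm:hormander} the transition kernel admits a smooth density $\rho_{\x_{0},t}(x)$, which by the previous step is strictly positive and jointly continuous in $(\x_{0},x)$. Over the compact set $\overline{C_{r}}$ the pointwise infimum $\rho(x):=\min_{\x_{0}\in\overline{C_{r}}}\rho_{\x_{0},t}(x)$ is continuous and strictly positive on $\xDomain$; restricting to any fixed compact $K\subset\xDomain$ and normalising produces a probability measure $\nu$ verifying \cref{as:minorization:inf}. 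The principal difficulty compared with the constant-coefficient case \cref{prop:minorization:R-2} is that one cannot reduce to a Gaussian auxiliary SDE: the backwards construction of $\bu$ must be performed pointwise in $\q$, and this is precisely where the pointwise invertibility hypotheses together with the compactness of $\qDomain$ come into play.
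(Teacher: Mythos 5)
Your argument is correct in substance but follows a genuinely different route from the paper. The paper never solves the control problem for \cref{eq:gle:q} directly; instead it builds a cascade of three auxiliary SDEs in the transformed variable ${\bm g}=\Gammaq_{1,2}(\q)\s$: a constant-coefficient system whose full support follows from the Ornstein--Uhlenbeck decomposition of \cref{thm:minorization:torus}, then a system with diffusion $\Gammaq_{1,2}(\q)\Sigmaq_{2}(\q)$ handled via the Stroock--Varadhan support theorem, then the full drift recovered by a Girsanov shift with a \emph{state-dependent} correction ${\bm u}(\q,\p,{\bm g})$, whose Novikov condition must be verified through the exponential Lyapunov estimates of \cref{lem:lya:exp:gle:nonconst:coeff} and \cref{lem:novikov:cond}, and finally a push-forward under the change of variables. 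You instead solve the exact controllability problem head-on, using the pointwise invertibility of $\Gammaq_{1,2}$ to back out $\tilde{\s}$ from a $\mathcal{C}^{3}$ interpolating curve and the invertibility of $\Sigmaq_{2}$ to back out the control; because your control is deterministic and bounded, Novikov is trivial and the entire exponential-Lyapunov apparatus becomes unnecessary. This is a real simplification, and it is essentially the extension of the paper's own control argument from \cref{prop:gle:minorization:3} to the degenerate, position-dependent case --- a construction the authors explicitly sidestep in their appendix as ``not easy.'' Your Hörmander verification and the construction of the minorizing measure as a minimum of densities over a compact set of initial conditions match the paper's treatment in \cref{thm:minorization:torus}.

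The one step you should tighten is the passage from the controlled deterministic trajectory to $\tilde{\PP}\bigl(\|\x(t)-\x_{1}\|<\epsilon\bigr)>0$. The Itô solution map is \emph{not} continuous in the driving Brownian path in the uniform topology when the noise is multiplicative, as it is here through $\Sigmaq_{2}(\q)$; so ``continuous dependence'' is not a valid justification as stated. The conclusion is nonetheless correct because the Itô and Stratonovich forms of \cref{eq:gle:q} coincide (the diffusion columns have vanishing $\q$- and $\p$-components and depend only on $\q$), so the Stroock--Varadhan support theorem applies and gives exactly the small-ball estimate you need --- this is the same tool the paper invokes for its step from \eqref{gle:a} to \eqref{gle:b}. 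Alternatively, and more cleanly, once you have hypoellipticity and a $\mathcal{C}^{1}$ control (take $\tilde{\q}$ one degree smoother), you can drop the Girsanov shift entirely and conclude the minorization condition directly from \cref{lem:HiMaSt} via \cref{as:smooth:trans} and \cref{as:control}.
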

\begin{proof}
Let $\x_{0} = (\q_{0}, \p_{0}, \s_{0}) \in C_{r}$ and $\tilde{\x}_{0} = (\q_{0}, \p_{0}, {\bm g}_{0})$ with ${\bm g}_{0} = \Gammaq_{1,2}(\q_{0})\s_{0}$. Consider the following cascade of modifications of \eqref{eq:gle:q}:
\begin{equation}\label{gle:c}
\begin{aligned}
\dot{\q}^{c} =&\, \p^{c},\\
\dot{\p}^{c} =&\, {\bm F}(\q) -  {\bm g}^{c}  \\
\dot{{\bm g}}^{c} =&\,  \sum_{i=1}^{n}\p^{c}_{i}\left (  \partial_{\q_{i}} \Gammaq_{1,2}(\q^{c}) \right ){\bm g}^{c} - \Gammaq_{1,2}(\q^{c})\Gammaq_{2,1}(\q^{c}) \p^{c}\\
 &- \Gammaq_{1,2}(\q^{c})\Gammaq_{2,2}(\q^{c})\Gammaq_{1,2}^{-1}(\q^{c}) {\bm g}^{c} + \Gammaq_{1,2}(\q^{c})\Sigmaq_{2}(\q^{c}) \dot{\W}_{t},\\
\text{with}& ~~ (\q^{c}(0), \p^{c}(0),{\bm g}^{c}(0) ) = \tilde{\x}_{0} ,
\end{aligned}
\end{equation}
and
\begin{equation}\label{gle:b}
\begin{aligned}
\dot{\q}^{b} &=\p^{b},\\
\dot{\p}^{b} & =  {\bm F}(\q^{b}) -  {\bm g}^{b}, \\
\dot{{\bm g}}^{b} &= \p^{b} - {\bm g}^{b}+ \Gammaq_{1,2}(\q)\Sigmaq_{2}(\q^{b}) \dot{\W}_{t},\\
&\text{with} ~~ (\q^{b}(0), \p^{b}(0),{\bm g}^{b}(0) ) = \tilde{\x}_{0} ,
\end{aligned}
\end{equation}
and 
\begin{equation}
\begin{aligned}\label{gle:a}
\dot{\q}^{a} &=\p^{a},\\
\dot{\p}^{a} & = {\bm F}(\q^{a}) -  {\bm g}^{a},\\
\dot{{\bm g}}^{a} &= \p^{a} - {\bm g}^{a}  + \dot{\W},\\
&\text{with} ~~ (\q^{a}(0), \p^{a}(0),{\bm g}^{a}(0) ) =\tilde{\x}_{0} .
\end{aligned}
\end{equation}
Let $\mu^{a}_{t},\mu^{b}_{t},\mu^{c}_{t}$ denote the law of the solution of \eqref{gle:a}, \eqref{gle:b} and \eqref{gle:c}, respectively. We show that for any $t>0$
\begin{enumerate}[label=(\roman*)]
\item \label{it:lem:min:1} $\text{supp}(\mu^{a}_{t}) = \xDomain$,
\item \label{it:lem:min:2} $\text{supp}(\mu^{b}_{t})= \text{supp}(\mu^{a}_{t})$,
\item \label{it:lem:min:3} $\text{supp}(\mu^{c}_{t})= \text{supp}(\mu^{b}_{t})$,
\item \label{it:lem:min:4} $\text{supp}(\mu_{t})= \text{supp}(\mu^{c}_{t})$,
\end{enumerate}
which then immediately implies that  $\text{supp}(\mu_{t})= \xDomain$ for $t>0$ and the minorization condition follows by the same arguments as in the proof of \cref{thm:minorization:torus}.
\begin{itemize}
\item 
Regarding \ref{it:lem:min:1}: the system \eqref{gle:a} satisfies the condition of \cref{thm:minorization:torus}, hence for sufficiently large $t'>0$ the law of \eqref{gle:a} at 
times $t\geq t'$ has full support. 
\item Regarding \ref{it:lem:min:2}: since $\Gammaq_{1,2}(\q)\Sigmaq_{2}(\q)$ is invertible, the controllability properties of 
\cref{gle:b} are identical to the controllability properties of \eqref{gle:a}, hence as a consequence of the Strook-Varadhan 
support theorem \cite{stroock1972support}  the law of \eqref{gle:b} and the law of \cref{gle:a} at time $t'$ coincide. In particular, together with \ref{it:lem:min:1} $\text{supp}(\mu^{c}_{t})= \text{supp}(\mu^{b}_{t})=\xDomain$. 
\item Regarding \ref{it:lem:min:3}:
We show this using \cref{thm:girsanov:1} (Girsanov's theorem). 
The difference of the drift terms in \cref{gle:b} and \cref{gle:c} can be written as
\[
\Gammaq_{1,2}(\q^{c})\Sigmaq_{2}(\q^{c}) {\bm u}(\q,\p,{\bm g}),
\]
 with  ${\bm u}(\q,\p,{\bm g})$ as defined in \cref{eq:u:func}. By \cref{lem:novikov:cond} the function ${\bm u}$ satisfies Novikov's condition \eqref{eq:novikov:cond}, which means that \cref{thm:girsanov:1} (Girsanov's theorem) is applicable and it follows that the support of the solution of \cref{gle:b} at $t'$ coincides with the support of the solution of \cref{gle:c} at $t'$, i.e., $\text{supp}(\mu^{c}_{t})= \text{supp}(\mu^{b}_{t})= \xDomain$.
\item Regarding \ref{it:lem:min:4}: We first note that since \ref{it:lem:min:1}-\ref{it:lem:min:3} holds, it trivially follows that $\mu^{c}_{t}=\xDomain$. Applying the change of variables $\s = \Gammaq_{1,2}^{-1}(\q){\bm g}$ to \eqref{gle:c} we obtain \eqref{eq:gle:q}, which means that $\mu_{t}$ is the push-forward of $\mu_{t}^{c}$ under the map,
\[
f: \begin{pmatrix} \q \\ \p \\ {\bm g} \end{pmatrix}  \mapsto \begin{pmatrix} \q \\ \p \\ \Gammaq_{1,2}^{-1}(\q){\bm g} \end{pmatrix} ,
\]
i.e.,
\[
\mu_{t}(A) = f (\mu^{c}_{t}) (A) = \mu^{c}_{t} \left ( f^{-1}(A)  \right ),~ A \in \mathcal{B}(\xDomain).
\]
Since $f$ is a smooth one-to-one mapping, in particular surjective, and $\text{supp}  (\mu^{c}_{t})= \xDomain$ we have
\[
\text{supp}(\mu_{t}) =  \text{supp} \left ( f (\mu^{c}_{t}) \right )= \xDomain.
\]
\end{itemize}
\qed\end{proof}

The following lemma, \cref{lem:novikov:cond},  shows that Novikov's condition is satisfied for the function $u$ required for the application of Girsanov's theorem in the above proof of \cref{thm:erg:gle:nonconst:coeff}.
\begin{lemma}\label{lem:novikov:cond}
Let $\qDomain = \mathbb{T}^{n}$ and  $\Gammaq$ and $\Sigmaq$ as in \cref{thm:erg:gle:nonconst:coeff}.
Define 
\begin{equation}\label{eq:u:func:1}
\begin{aligned}
{\bm u}_{1}(\q,\p,{\bm g}) =& \left (\Gammaq_{1,2}(\q)\Sigmaq_{2}(\q) \right )^{-1} \left (   \Gammaq_{1,2}(\q)\Gammaq_{2,1}(\q) \p -\p - \Gammaq_{1,2}(\q)\Gammaq_{2,2}(\q)\Gammaq_{1,2}^{-1}(\q) {\bm g} + {\bm g}   \right)\\
=&\; {\bm G}(\q)\begin{pmatrix}\p \\ {\bm g} \end{pmatrix},
\end{aligned}
\end{equation}
with
\[
{\bm G}(\q) := 
\left (\Gammaq_{1,2}(\q)\Sigmaq_{2}(\q) \right )^{-1}  
\begin{pmatrix}
\Gammaq_{1,2}(\q)\Gammaq_{2,1}(\q) - \I_{n} &  - \Gammaq_{1,2}(\q)\Gammaq_{2,2}(\q)\Gammaq_{1,2}^{-1}(\q) + \I_{n}
\end{pmatrix} \in \RR^{n \times 2n},
\]
and 
\begin{equation}\label{eq:u:func:2}
\begin{aligned}
{\bm u}_{2}(\q,\p,{\bm g})=-  \left (\Gammaq_{1,2}(\q)\Sigmaq_{2}(\q) \right )^{-1} \sum_{i=1}^{n}\p_{i}\left (  \partial_{\q_{i}} \Gammaq_{1,2}(\q) \right ){\bm g},
\end{aligned}
\end{equation}
The function 
\begin{equation}\label{eq:u:func}
{\bm u}(\q,\p,{\bm g}) = {\bm u}_{1}(\q,\p,{\bm g}) + {\bm u}_{2}(\q,\p,{\bm g})
\end{equation}
satisfies Novikov's condition \eqref{eq:novikov:cond}.\\
\end{lemma}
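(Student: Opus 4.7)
The goal is to establish the exponential moment bound $\EE\!\left[\exp\!\left(\tfrac{1}{2}\int_{0}^{T}\|{\bm u}(\q(t),\p(t),{\bm g}(t))\|^{2}\,\dd t\right)\right] < \infty$, where $(\q,\p,{\bm g})$ evolves under \cref{gle:b} started at $\tilde{\x}_{0}$.

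First, I would leverage the compactness of $\qDomain = \mathbb{T}^{n}$ together with the smoothness and invertibility of $\Gammaq_{1,2},\Gammaq_{2,1},\Gammaq_{2,2}$ and $\Sigmaq_{2}$ on $\qDomain$ to conclude that every matrix-valued coefficient appearing in ${\bm u}_{1}$ (through ${\bm G}(\q)$) and in ${\bm u}_{2}$ (through $\partial_{\q_{i}}\Gammaq_{1,2}$ and $(\Gammaq_{1,2}\Sigmaq_{2})^{-1}$) is uniformly bounded. This yields constants $C_{1},C_{2}>0$ such that
\[
\|{\bm u}(\q,\p,{\bm g})\|^{2}\le C_{1}\bigl(1+\|\p\|^{2}+\|{\bm g}\|^{2}\bigr)+C_{2}\|\p\|^{2}\|{\bm g}\|^{2}\le C_{3}\bigl(1+\|\z\|^{2}+\|\z\|^{4}\bigr),
\]
with $\z=(\p^{\trans},{\bm g}^{\trans})^{\trans}$, where the second inequality uses $ab\le\tfrac{1}{2}(a^{2}+b^{2})$.

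Second, following the exponential Lyapunov construction of \cref{lem:lya:exp:gle:nonconst:coeff:2} adapted to the generator $\Lc^{b}$ of \cref{gle:b}, I would build a weight $\Lambda(\x)=\exp(\tfrac{\theta}{2}\,\z^{\trans}\C\z)$ for a suitably chosen symmetric positive definite $\C$ and $\theta>0$ small, exploiting the fact that the constant linear drift matrix governing $(\p,{\bm g})$ in \cref{gle:b} is Hurwitz (eigenvalues $-\tfrac{1}{2}\pm\tfrac{i\sqrt{3}}{2}$). The target is the Feynman--Kac type inequality
\[
\Lc^{b}\Lambda(\x)+\tfrac{1}{2}\|{\bm u}(\x)\|^{2}\Lambda(\x)\le K\Lambda(\x),
\]
for some $K\in\RR$. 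Itô's formula and Grönwall applied to $t\mapsto\Lambda(\x(t))\exp(\tfrac{1}{2}\int_{0}^{t}\|{\bm u}\|^{2}\,\dd s)$ would then produce $\EE\bigl[\Lambda(\x(T))\exp(\tfrac{1}{2}\int_{0}^{T}\|{\bm u}\|^{2}\,\dd s)\bigr]\le e^{KT}\Lambda(\tilde{\x}_{0})<\infty$, from which Novikov's condition follows since $\Lambda\ge 1$.

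The main difficulty lies in establishing the Feynman--Kac inequality: $\|{\bm u}\|^{2}$ grows quartically in $\z$, whereas for a quadratic exponential $\Lambda$, the ratio $\Lc^{b}\Lambda/\Lambda$ supplies only quadratic damping at infinity after accounting for the order-$\theta^{2}\|\z\|^{2}$ diffusion correction. To close the argument I would either (a) upgrade $\Lambda$ to an exponent that is quartic in $\z$ and balance the quartic damping against the induced sixth-order diffusion correction by taking $\theta$ sufficiently small, using that the Hurwitz nature of the linear part gives strict decay of $\z^{\trans}\C\z$ along deterministic trajectories; or (b) localize the argument to short time intervals $[0,\delta]$, where sub-Gaussian tail estimates for linear SDEs with bounded drift perturbation allow the Feynman--Kac inequality to hold uniformly over initial points in $C_{r}$, and then iterate across $[0,T]$ via the Markov property and the tower property of conditional expectations.
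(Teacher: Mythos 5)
Your opening step --- using compactness of $\qDomain=\mathbb{T}^{n}$ and the smoothness/invertibility of the coefficients to obtain $\norm{{\bm u}}_{2}^{2}\leq C\bigl(1+\norm{\z}_{2}^{2}+\norm{\z}_{2}^{4}\bigr)$ --- matches the paper's starting point, but the two devices you offer to absorb the quartic part both break down, and this is a genuine gap rather than a technicality. For your route (a): writing $\Lambda=e^{\phi}$ with $\phi=\tfrac{\theta}{2}(\z^{\trans}\C\z)^{2}$, the second-order part of the generator acts on $\Lambda$ through $\nabla^{2}\Lambda=(\nabla^{2}\phi+\nabla\phi\otimes\nabla\phi)\Lambda$, and the term $\nabla\phi\otimes\nabla\phi$ contributes to $\Lc^{b}\Lambda/\Lambda$ a quantity of size $\theta^{2}\norm{\z}^{6}$, whereas the Hurwitz drift supplies damping only of size $\theta\norm{\z}^{4}$. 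The sixth-order term therefore dominates for large $\norm{\z}$ for \emph{every} $\theta>0$; shrinking $\theta$ only pushes out the radius at which the inequality fails, so the global Feynman--Kac bound $\Lc^{b}\Lambda+\tfrac12\norm{{\bm u}}^{2}\Lambda\leq K\Lambda$ is unattainable (there is a second tension as well: to dominate $\tfrac12\norm{{\bm u}}^{2}\sim\norm{\z}^{4}$ you would need $\theta$ bounded \emph{below}, not small). Route (b) does not repair this, because the obstruction is not the length of the time interval: since ${\bm g}$ is driven nondegenerately by the Brownian motion, a Freidlin--Wentzell lower bound (steer ${\bm g}$ to a level $h$ and hold it there, so that $\p$ grows linearly and $\int_{0}^{\delta}\norm{\p}^{2}\norm{{\bm g}}^{2}\dd t\sim h^{4}\delta^{3}$ at a cost of order $h^{2}$) indicates that $\EE\bigl[\exp\bigl(\epsilon\int_{0}^{\delta}\norm{\p}^{2}\norm{{\bm g}}^{2}\dd t\bigr)\bigr]=\infty$ for every $\epsilon,\delta>0$; moreover, the proposed iteration over subintervals via the Markov property would require bounds uniform over the unbounded states reached at the subdivision times, which is precisely what is missing.

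For comparison, the paper's argument is structured differently and avoids exponential martingales and Gr\"onwall altogether: it splits ${\bm u}={\bm u}_{1}+{\bm u}_{2}$ via $\norm{{\bm u}_{1}+{\bm u}_{2}}_{2}^{2}\leq 2\norm{{\bm u}_{1}}_{2}^{2}+2\norm{{\bm u}_{2}}_{2}^{2}$, bounds $\norm{{\bm u}_{1}}^{2}$ quadratically in $\z$ by compactness, applies Jensen's inequality in the time variable to replace the exponential of the time integral by a time integral of exponentials, exchanges expectation and time integral by Tonelli, and then invokes the fixed-time exponential moment bound supplied by \cref{lem:lya:exp:gle:nonconst:coeff} (the exponential Lyapunov function $\Kc_{\theta,l}$ with $l=1$); the quartic piece ${\bm u}_{2}$ is dispatched in a footnote by taking $l=2$ in the same lemma. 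So the intended mechanism is ``Jensen + Tonelli + exponential Lyapunov bounds at fixed times,'' not a pathwise Feynman--Kac estimate. You should be aware, though, that the difficulty you isolated is the true crux: the $l=2$ exponential Lyapunov function faces exactly the $\theta^{2}\norm{\z}^{6}$-versus-$\theta\norm{\z}^{4}$ imbalance described above (and the invariant measure has Gaussian tails, so uniform quartic-exponential moments are not available), so the treatment of ${\bm u}_{2}$ is the delicate point on either route; it is only unavoidable when $\Gammaq_{1,2}$ is non-constant, since otherwise ${\bm u}_{2}\equiv\0$. As written, however, your proposal does not close the argument for the quartic part, so it does not prove the lemma.
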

\begin{proof}[Proof of \cref{lem:novikov:cond}]
Since 
\[
\norm{{\bm u}_{1} + {\bm u}_{2}}_{2}^{2} \leq 2\norm{{\bm u}_{1}}_{2}^{2} + 2\norm{{\bm u}_{2}}_{2}^{2},
\]
it is sufficient to show that Novikov's condition holds for ${\bm u}_{1}$ and ${\bm u}_{2}$. We only show the validity of Novikov's condition explicitly for ${\bm u}_{1}$. \footnote{The respective proof for ${\bm u}_{2}$ is essentially the same with the only difference that in \cref{eq:bound:u1} we need to bound $\norm{{\bm u}_{2}}_{2}^{2}$ by a term proportional to $\norm{\p}^{4}_{2} +\norm{\bm g}_{2}^{4}$ instead of bounding $u_{2}$ by a term  which is proportional to $\norm{\p}^{2}_{2} +\norm{{\bm g}}_{2}^{2}$ as we do in the proof for ${\bm u}_{1}$. By choosing $l=2$ in  \cref{eq:Kc:alpha:2} the remaining steps of the proof are then exactly the same as for $u_{1}$.}
\\

Since $\Gammaq_{1,2},\Gammaq_{2,1}, \Gammaq_{2,2}$ and $\Sigmaq_{2}$ are smooth functions of $\q$ and since $\qDomain$ is compact, the spectrum of ${\bm G}^{\trans}(\q) {\bm G}(\q)$ is uniformly bounded from above in $\q$, hence there is $\lambda_{\max} >0$ such that 
\begin{equation}\label{eq:bound:u1}
\lambda_{\max}^{2} ( \norm{\p}_{2}^{2} + \norm{{\bm g}}^{2} ) \geq (\p^{\trans},{\bm g}^{\trans}) {\bm G}^{\trans}(\q) {\bm G}(\q) \begin{pmatrix} \p \\ {\bm g} \end{pmatrix} = \norm{{\bm u}_{1}(\q, \p,{\bm g})}^{2},
\end{equation}
and therefore
\[
\EE \left [ \exp(\int_{0}^{T} \norm{ {\bm u}_{1}(\q(t),\p(t),{\bm g}(t))} \dd t ) \right ]  \leq \EE \left [ \exp(\int_{0}^{T} \lambda_{\max}^{2} ( \norm{\p(t)}^{2} + \norm{{\bm g}(t)}^{2} ) \dd t ) \right ],
\]
for any $T>0$. Let $\epsilon<2\tilde{\theta}/ \lambda_{\max}^{2}$, with $\tilde{\theta} = \theta/\tilde{\lambda}_{\max}$ and $\theta>0,\tilde{\lambda}_{\max}$ 
as defined in \cref{lem:lya:exp:gle:nonconst:coeff}. We find
\[
\begin{aligned}
 \exp(\int_{0}^{T} \lambda_{\max}^{2} ( \norm{\p(t)}^{2} + \norm{{\bm g}(t)}^{2} ) \dd t )  
 =  \exp( \frac{1}{\epsilon} \int_{0}^{T} \epsilon \lambda_{\max}^{2} ( \norm{\p(t)}^{2} + \norm{{\bm g}(t)}^{2} ) \dd t )  \\
 \leq \frac{1}{\epsilon} \int_{0}^{T}  \exp( \epsilon \lambda_{\max}^{2} ( \norm{\p(t)}^{2} + \norm{{\bm g}(t)}^{2} ) ) \dd t, 
 \end{aligned}
\]
by Jensen's inequality, thus 
\[
\begin{aligned}
\EE \left [ \exp(\int_{0}^{T} \norm{ {\bm u}_{1}(\q(t),\p(t),{\bm g}(t))} \dd t ) \right ] 
&\leq \EE \left [ \frac{1}{\epsilon} \int_{0}^{T}  \exp( \epsilon \lambda_{\max}^{2} ( \norm{\p(t)}^{2} + \norm{{\bm g}(t)}^{2} ) ) \dd t   \right ] \\
&=   \frac{1}{\epsilon} \int_{0}^{T} \EE \left [ \exp( \epsilon \lambda_{\max}^{2} ( \norm{\p(t)}^{2} + \norm{{\bm g}(t)}^{2} ) ) \right ]\dd t,
\end{aligned}
\]
by Tonelli's theorem. Let for $\alpha>0$, 
\begin{equation}\label{eq:Kc:alpha:2}
\Kc_{\alpha} :=\Kc_{\alpha,l}, \,l=1,
\end{equation}
with  $\Kc_{\alpha,l}$ as defined in
\cref{eq:Kc:alpha}. Using 
\begin{equation}
\exp( \epsilon \lambda_{\max}^{2} ( \norm{\p}^{2} + \norm{{\bm g}}^{2} ) ) \leq \Kc_{\tilde{\theta}}(\z),
\end{equation}
we conclude using  \cref{lem:lya:exp:gle:nonconst:coeff}, \eqref{eq:lem:bound}
\[
\begin{aligned}
\frac{1}{\epsilon} \int_{0}^{T} \EE \left [ \exp(\epsilon \lambda_{\max}^{2} ( \norm{\p(t)}^{2} + \norm{{\bm g}(t)}^{2} ) ) \right ]  \dd t  
&\leq \frac{1}{\epsilon} \int_{0}^{T} \EE \left [ 
\Kc_{\tilde{\theta}}(\z(t)) \right ]  \dd t \\
&\leq  \frac{1}{\epsilon} \int_{0}^{T} 
e^{-t}\Kc_{\theta}(\p_{0}, \Gammaq_{1,2}(\q_{0}){\bm g}_{0}) + b (1-e^{-t}) \dd t \\
& < \infty.
\end{aligned}
\]
with $b>0$ as specified in \cref{lem:lya:exp:gle:nonconst:coeff}.
\qed\end{proof}





\begin{lemma}\label{lem:lya:exp:gle:nonconst:coeff}
Let $\qDomain = \mathbb{T}^{n}$ and  $\Gammaq$ and $\Sigmaq$ as in \cref{thm:erg:gle:nonconst:coeff} and let $\C\in \RR^{2n\times 2n}$ with 
\[
\min\sigma(\C)=1,
\]
be a symmetric positive definite matrix such that 
\[
\Gammaq^{\trans}(\q)\C + \C  \Gammaq(\q),
\]
is positive definite for all $\q \in\qDomain$. For $\alpha>0$ and $l\in\mathbb{N}$ define 
\begin{equation}\label{eq:Kc:alpha}
\Kc_{\alpha,l}(\p,\s) = e^{\frac{\alpha}{2} \left(\z^{\trans} \C \z \right)^{l}}.
\end{equation}
There exists $\theta>0$ such that
\cref{as:lyapunov:inf} is satisfied with $\Kc=\Kc_{\theta,l}$ and $\Lc=\Lcgq$. Moreover, 
for $\tilde{\theta} = \theta/ \tilde{\lambda}_{\max}$ with
\[
\tilde{\lambda}_{\max} := \max_{\q \in \qDomain} \left  \{ \abs{\lambda} \given \lambda \in \sigma \left(\Gammaq_{1,2}^{-1}(\q) \right )\right \} 
\]
the expectation of $\Kc_{\tilde{\theta},l}$
as function of the solution $(\q^{c}, \p^{c}, {\bm g}^{c})$ of \cref{gle:c} can be bounded as
\begin{equation}\label{eq:lem:bound}
\EE \left [   \Kc_{\tilde{\theta},l}(\p^{c},{\bm g}^{c}) \given (\p^{c}(0),{\bm g}^{c}(0) ) = (\p_{0},{\bm g}_{0}) \right ] 
\leq e^{-t} \Kc_{\theta,l}(\p_{0},\Gammaq_{1,2}(\q_{0}){\bm g}_{0}) + b(1- e^{-t}) + c(l,t),
\end{equation}
where $b>0$ as above and $c(l,t)$ is a finite nonnegative constant which depends on $l$ and $t$ with $c(l,t)=0$ for $l=1$ and all $t\geq0$.
\end{lemma}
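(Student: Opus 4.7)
The statement has two parts: verifying the Lyapunov inequality \cref{as:lyapunov:inf} for $\Kc_{\theta,l}$ under $\Lcgq$, and establishing the expectation bound along the auxiliary process \cref{gle:c}. I would handle them in order.

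For the Lyapunov inequality, I would use the identity $\Lcgq e^{\Phi} = e^\Phi\bigl(\Lcgq\Phi + \tfrac{\beta^{-1}}{2}(\nabla_\z\Phi)^\trans \Sigmaq(\q)\Sigmaq^\trans(\q)(\nabla_\z\Phi)\bigr)$ applied to $\Phi = \tfrac{\theta}{2}(\z^\trans\C\z)^l$. Differentiating gives $\nabla_\z\Phi = \theta l(\z^\trans\C\z)^{l-1}\C\z$, so the drift part of $\Lc_O$ contributes the dissipation
\[
-\Gammaq(\q)\z\cdot\nabla_\z\Phi \leq -\tfrac{\theta l \tilde\lambda_C}{2}(\z^\trans\C\z)^l,
\]
where $\tilde\lambda_C > 0$ is the uniform (over $\q\in\qDomain$) generalized-eigenvalue lower bound for $\Gammaq^\trans(\q)\C+\C\Gammaq(\q)$ relative to $\C$, which exists by hypothesis and compactness of $\qDomain=\mathbb{T}^n$. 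The Hamiltonian term $\Lc_H\Phi = \theta l(\z^\trans\C\z)^{l-1}{\bm F}(\q)\cdot(\C\z)_{1:n}$ is of subleading order in $\|\z\|$ since $\|{\bm F}\|_\infty<\infty$ on the torus. Bounding the Hessian-trace and squared-gradient contributions by means of $\z^\trans\C\Sigmaq\Sigmaq^\trans\C\z \leq M\z^\trans\C\z$ (with $M$ uniform in $\q$, using compactness and $\min\sigma(\C)=1$), and choosing $\theta > 0$ sufficiently small so that the drift-induced dissipation dominates all remaining contributions, yields $\Lcgq\Kc_{\theta,l} \leq -a\Kc_{\theta,l}+b$, which verifies \cref{as:lyapunov:inf}.

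For the expectation bound, observe that the diffeomorphism ${\bm g} = \Gammaq_{1,2}(\q)\s$ (as used in the proof of \cref{thm:erg:gle:nonconst:coeff}) maps solutions of \cref{eq:gle:q} to solutions of \cref{gle:c}. Consequently the generator $\tilde\Lc$ of \cref{gle:c} is conjugate to $\Lcgq$ via $\tilde\Lc\Kc(\q,\p,{\bm g}) = \Lcgq\tilde\Kc(\q,\p,\Gammaq_{1,2}^{-1}(\q){\bm g})$, where $\tilde\Kc(\q,\p,\s) := \Kc(\p,\Gammaq_{1,2}(\q)\s)$. The calibration $\tilde\theta = \theta/\tilde\lambda_{\max}$ is designed so that the quadratic form $(\p,{\bm g})^\trans\C(\p,{\bm g})$ appearing in the exponent of $\Kc_{\tilde\theta,l}(\p,{\bm g})$, compared to its pullback $(\p,\Gammaq_{1,2}(\q)\s)^\trans\C(\p,\Gammaq_{1,2}(\q)\s)$, absorbs the spectral-radius factor $\tilde\lambda_{\max}$ uniformly in $\q$. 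I would then apply Dynkin's formula to $\tilde\Kc_{\theta,l}$ along the original \cref{eq:gle:q}-dynamics, initialized at the pre-image $(\q_0,\p_0,\Gammaq_{1,2}^{-1}(\q_0){\bm g}_0)$ of the \cref{gle:c}-initial condition, combine this with the Lyapunov inequality above and Gronwall's inequality, and transport the bound back through the diffeomorphism to obtain the claimed exponential-decay estimate. The residual $c(l,t)$ collects cross-terms generated by $\partial_{q_i}\Gammaq_{1,2}(\q)$ in the generator of \cref{gle:c}; for $l=1$ these combine with the existing first-order terms and are absorbed into $b$, yielding $c(1,t)=0$, whereas for $l\geq 2$ they produce a genuine time-dependent correction.

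\textbf{Main obstacle.} The delicate step is the balance in the Lyapunov computation: the squared-gradient correction contributes a term of order $\theta^2(\z^\trans\C\z)^{2l-1}$, which for $l \geq 2$ has higher growth in $\|\z\|$ than the drift dissipation of order $\theta(\z^\trans\C\z)^l$. Resolving this requires leveraging the structural compatibility of $\C$ with $\Sigmaq\Sigmaq^\trans$ and $\Gammaq$ afforded by the fluctuation-dissipation hypothesis (rather than merely shrinking $\theta$), together with uniformity from the compactness of $\qDomain$. A secondary subtlety is the precise spectral bookkeeping justifying $\tilde\theta = \theta/\tilde\lambda_{\max}$: one must translate the spectral-radius bound on $\Gammaq_{1,2}^{-1}(\q)$ into an operator-norm inequality between the two quadratic forms on $\RR^{n+m}$ arising before and after the change of variables.
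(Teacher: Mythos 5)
Your route is essentially the paper's. For the Lyapunov inequality the paper performs the same computation you describe for $l=1$: apply $\Lcgq$ to $e^{\frac{\theta}{2}\z^{\trans}\C\z}$, bound the force term by $\bT(\norm{\z})$ using compactness of $\mathbb{T}^{n}$, and absorb the $\theta^{2}\z^{\trans}\C\Sigmaq\Sigmaq^{\trans}\C\z$ correction into the $-\theta\,\bT(\norm{\z}^{2})$ dissipation by taking $\theta$ small. For the expectation bound the paper is slightly more elementary than your generator-conjugation argument: it uses the change of variables $\s=\Gammaq_{1,2}^{-1}(\q)\bm{g}$ only to rewrite $\Kc_{\tilde{\theta},l}(\p^{c},\bm{g}^{c})$ as $\Kc_{\tilde{\theta},l}(\p,\Gammaq_{1,2}^{-1}(\q)\s)$, dominates this pointwise by $\Kc_{\theta,l}(\p,\s)$ via the spectral bound $\tilde{\lambda}_{\max}$ (this is exactly your calibration of $\tilde{\theta}$ against $\theta$), and then invokes the Gronwall/Dynkin consequence of the Lyapunov inequality along \cref{eq:gle:q} itself — no Dynkin formula for the generator of \cref{gle:c} is ever needed, and no cross-terms from $\partial_{\q_{i}}\Gammaq_{1,2}$ enter.

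One caveat on your ``main obstacle.'' You correctly identify that for $l\geq 2$ the squared-gradient term is of order $\theta^{2}(\z^{\trans}\C\z)^{2l-1}$, which outgrows the dissipation of order $\theta(\z^{\trans}\C\z)^{l}$; but your proposed cure — exploiting structural compatibility of $\C$ with $\Sigmaq\Sigmaq^{\trans}$ — does not remove a growth-rate mismatch, only constants, so it cannot rescue \cref{as:lyapunov:inf} in the form $\Lcgq\Kc\leq -a\Kc+b$ for $l\geq2$ by this argument. The paper does not close this gap either (it proves only $l=1$ and asserts the rest ``by induction,'' leaving $c(l,t)$ unexplained), and only $l=1$ is needed for the bound on ${\bm u}_{1}$ in \cref{lem:novikov:cond}; so your proof is adequate for the case actually used, but you should not present the $l\geq2$ case as resolved.
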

\begin{proof}
We recall that the generator of \eqref{eq:gle:q} is of the form
\[
\begin{aligned}
\Lcgq = {\bm F}(\q) \cdot \nabla_{\p} + \p \cdot \nabla_{\q} - \Gammaq(\q) \z \cdot \nabla_{\z} + \frac{1}{2}\Sigmaq(\q)\Sigmaq^{\trans}(\q) : \nabla_{\p}^{2},
\end{aligned}
\]
We show the result only for the case $l=1$. For $l>1$ the result follows by induction. Let $\Kc_{\theta}=\Kc_{\theta,1}$. Applying the generator on $\Kc_{\theta}$ we obtain
\[
\begin{aligned}
\Lc \Kc_{\theta}(\p,\s)  &=  \left ( \theta {\bm F}(\q) \cdot  \left ( \C_{1,1}\p + \C_{1,2}\s \right )  \right ) \Kc_{\theta}(\p,\s) \\
& + \left ( -  \theta \Gammaq(\q) \z \cdot \C \z  + \frac{1}{2} \left (  \theta \text{tr}\left (\Sigmaq(\q)\Sigmaq^{\trans}(\q)\C \right )  + \theta^{2} \z^{\trans} \C \Sigmaq(\q)\Sigmaq^{\trans}(\q) \C\z \right ) \right )\Kc_{\theta}(\p,\s) \\
& =  \left ( - \bT \left (\theta \norm{z}^{2} \right ) + \bT \left ((1+\theta)  \norm{z} \right ) + \bT \left (\theta^{2} \norm{z}^{2} \right) \right )\Kc_{\theta}(\p,\s) \\
& <  - \Kc_{\theta}(\p,\s) + b,
\end{aligned} 
\]
for sufficiently small $\theta > 0$ and sufficiently large $b>0$. Consequently, for $\tilde{\theta} = \theta/ \tilde{\lambda}_{\max}$, we obtain
\[
\begin{aligned}
&\,\EE \left [   \Kc_{\tilde{\theta}}(\p^{c}(t),{\bm g}^{c}(t)) \given (\p^{c}(0),{\bm g}^{c}(0) ) = (\p_{0},{\bm g}_{0}) \right ] \\
=&\,\EE \left [   \Kc_{\tilde{\theta}}(\p(t),\Gammaq_{1,2}^{-1}(\q(t)) \s(t)) \given (\p(0),\s(0) ) = (\p_{0},\Gammaq_{1,2}(\q_{0}){\bm g}_{0}) \right ] \\
\leq& \,\EE \left [   \Kc_{\tilde{\theta}}(\tilde{\lambda}_{\max}\p(t), \tilde{\lambda}_{\max} \s(t)) \given (\p(0),\s(0) ) = (\p_{0},\Gammaq_{1,2}(\q_{0}){\bm g}_{0}) \right ] \\
=& \,\EE \left [   \Kc_{\theta}(\p(t), \s(t)) \given (\p(0),\s(0) ) = (\p_{0},\Gammaq_{1,2}(\q_{0}){\bm g}_{0}) \right ] \\
\leq&\, e^{-t} \Kc_{\theta}(\p_{0},\Gammaq_{1,2}(\q_{0}){\bm g}_{0}) + b(1- e^{-t}).
\end{aligned}
\]
\qed\end{proof}
The last \cref{prop:lya:torus-2} of this section provides conditions for the existence of suitable Lyapunov functions with polynomial growth for \eqref{eq:gle:q}.
\begin{lemma}\label{prop:lya:torus-2}
Let $\qDomain = \mathbb{T}^{n}$, $-\Gammabf \in \RR^{(m+n)\times(n+m)}$ stable, and $U \in \mathcal{C}^{\infty}(\mathbb{T}^{n},\mathbb{R})$. Moreover, assume that \eqref{as:C:matrix} holds and let $\C$ be as specified therein. 
\[
\mathcal{K}_{l}(\q,\p,\s) = \left ( \z^{\trans} \C \z +  U(\q) - U_{min}  + 1 \right )^{l}, ~ l \in \mathbb{N},
\]
defines a family of Lyapunov functions for the differential operator $\Lcgq$, i.e.,  for each $l \in \mathbb{N}$ there exist constants $a_{l}>0$, $b_{l}\in \RR$, 
such that for $\Lc = \Lcgq, \Kc = \Kc_{l}$, \cref{as:lyapunov:inf} holds for $a = a_{l}, b=b_{l}$.
\end{lemma}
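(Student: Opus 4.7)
The plan is to follow closely the argument of \cref{prop:lya:torus}, modified only by the $\q$-dependence of $\Gammaq,\Sigmaq$ and by the extra $U(\q)-U_{min}+1$ term in the Lyapunov function. The essential new ingredient is to exploit the compactness of $\qDomain=\mathbb{T}^{n}$ in order to upgrade the pointwise positive definiteness of $\Gammaq(\q)\C+\C\Gammaq^{\trans}(\q)$ supplied by \eqref{as:C:matrix} to a uniform-in-$\q$ estimate; likewise the smooth $\q$-dependent coefficients ${\bm F}(\q)$, $\nabla_{\q}U(\q)$, and $\Sigmaq(\q)\Sigmaq^{\trans}(\q)$ are automatically uniformly bounded on the torus.

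First I would treat the case $l=1$. A direct computation yields
\begin{equation*}
\begin{aligned}
\Lcgq \Kc_{1}(\x) =\;& 2{\bm F}(\q)\cdot(\C_{1,1}\p + \C_{1,2}\s) + \p\cdot\nabla_{\q}U(\q)\\
& -\z^{\trans}[\Gammaq^{\trans}(\q)\C + \C\Gammaq(\q)]\z + \beta^{-1}\sum_{i,j}[\Sigmaq(\q)\Sigmaq^{\trans}(\q)]_{i,j}\C_{i,j}.
\end{aligned}
\end{equation*}
By continuity of $\q \mapsto \Gammaq(\q)\C+\C\Gammaq^{\trans}(\q)$ on the compact torus and \eqref{as:C:matrix}, there exists $\lambda>0$ with $\z^{\trans}[\Gammaq^{\trans}(\q)\C+\C\Gammaq(\q)]\z \geq \lambda\,\z^{\trans}\C\z$ uniformly in $(\q,\z)$. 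The first two terms on the right-hand side are bounded by $c\,\norm{\z}_{2}$ for some $c>0$, and can therefore be absorbed into $\lambda\,\z^{\trans}\C\z$ via Young's inequality with an arbitrarily small loss. Noting that $\z^{\trans}\C\z$ and $\Kc_{1}-1$ differ only by the bounded quantity $U(\q)-U_{min}$, this yields $\Lcgq\Kc_{1}(\x) \leq -a_{1}\Kc_{1}(\x)+b_{1}$ for some $a_{1}>0$ and $b_{1}\in\RR$.

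For $l>1$, the plan is to use the same chain-rule calculation as in \cref{prop:lya:torus}, which here takes the form
\begin{equation*}
\Lcgq\Kc_{l}(\x) = l\,\Kc_{l-1}(\x)\Lcgq\Kc_{1}(\x) + 2l(l-1)\beta^{-1}\,\Kc_{l-2}(\x)\,\z^{\trans}\C\,\Sigmaq(\q)\Sigmaq^{\trans}(\q)\,\C\z.
\end{equation*}
Compactness again supplies $\tilde{\lambda}>0$ with $\z^{\trans}\C\,\Sigmaq(\q)\Sigmaq^{\trans}(\q)\,\C\z\leq \tilde{\lambda}\,\z^{\trans}\C\z \leq \tilde{\lambda}\,\Kc_{1}(\x)$, so the last term is dominated by a constant multiple of $\Kc_{l-1}$. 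Combining with the $l=1$ bound and applying Young's inequality $\Kc_{l-1}\leq \epsilon_{l}\Kc_{l}+C_{\epsilon_{l}}$ for sufficiently small $\epsilon_{l}>0$ then gives $\Lcgq\Kc_{l}\leq -a_{l}\Kc_{l}+b_{l}$ with $a_{l}>0$.

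I do not expect a serious obstacle. The only place the non-constant coefficients $\Gammaq(\q),\Sigmaq(\q)$ could in principle cause trouble is in securing the uniform coercivity $\z^{\trans}[\Gammaq^{\trans}(\q)\C+\C\Gammaq(\q)]\z \geq \lambda\,\z^{\trans}\C\z$, but this is settled immediately by continuity and compactness of $\mathbb{T}^{n}$; the remaining steps transcribe directly from \cref{prop:lya:torus}.
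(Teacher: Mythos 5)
Your proof is correct and follows exactly the route the paper itself takes: it transcribes the argument of \cref{prop:lya:torus}, using compactness of $\mathbb{T}^{n}$ and \eqref{as:C:matrix} to obtain the uniform-in-$\q$ coercivity and boundedness estimates, which is precisely the adaptation the paper's own (one-line) proof of \cref{prop:lya:torus-2} asserts and omits. One cosmetic point, inherited from the paper's statement rather than introduced by you: the quadratic form arising in $\Lcgq\Kc_{1}$ is $\z^{\trans}\bigl(\Gammaq^{\trans}(\q)\C+\C\Gammaq(\q)\bigr)\z$, whereas \eqref{as:C:matrix} asserts positivity of $\Gammaq(\q)\C+\C\Gammaq^{\trans}(\q)$; the two conditions are exchanged by replacing $\C$ with $\C^{-1}$ (a congruence transformation), so your uniform constant $\lambda$ exists after this harmless substitution.
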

\begin{proof}
The proof is very similar to the proof \cref{prop:lya:torus}. The existence of a suitable matrix $\C$ as specified in \eqref{as:C:matrix} allows to extend all arguments in that proof with only some very small adaptations. For this reason we skip a details of the proof here. 
\qed\end{proof}


\section{Conclusion}
In this article we have presented an integrated perspective on ergodic properties of the generalized Langevin equation, for systems that can be written in the quasi-Markovian form.  Although the GLE was well studied in the case of constant friction and damping and for conservative forces, our results indicate that these can often be extended to  nonequilibrium models with non-gradient forces and non-constant friction and noise, thus providing a foundation for using GLEs in a much broader range of applications.

\section*{Acknowledgements}
The authors wish to thank Greg Pavliotis (Imperial), Jonathan Mattingly (Duke) and Gabriel Stoltz (ENPC) for their generous assistance in providing comments at various stages of this project. In particular, the authors thank Jonathan Mattingly for pointing out the possibility of using Girsanov's theorem in the proof of \cref{thm:erg:gle:nonconst:coeff}.  Both authors acknowledge the support of the European Research Council (Rule Project, grant no. 320823).  BJL further acknowledges the support of the EPSRC (grant no. EP/P006175/1) during the preparation of this article. 
The work of MS was supported by the National Science Foundation under grant DMS-1638521 to the Statistical and Applied Mathematical Sciences Institute. 
%
%
\bibliographystyle{abbrv}
\bibliography{./GLErefs.bib}
%
\appendix
\section{Auxiliary material on linear algebra}
The following \cref{lem:schur:posdef} is repeatedly used in the proofs of \cref{lem:purecolor} and  \cref{prop:lya:R}, as well as in \cref{exa:not:empty} to show the positive (semi-)definiteness of symmetric matrices. 
\begin{lemma}\label{lem:schur:posdef}
Let $A$ be a symmetric block structured matrix of the form
\[
{\bm A}:=\begin{pmatrix}
{\bm A}_{1,1} & {\bm A}_{1,2} \\
{\bm A}_{1,2}^{\trans} & {\bm A}_{2,2} \\
\end{pmatrix} \in \mathbb{R}^{n+m \times n+m}
\]
\begin{enumerate}[label=(\roman*)]
\item \label{item:lem:eq:1}If ${\bm A}_{2,2}$ is positive definite, then ${\bm A}$ is positive (semi-)definite if and only if 
\[
{\bm A}_{1,1} - {\bm A}_{1,2}{\bm A}_{2,2}^{-1} {\bm A}_{1,2}^{\trans}
\]
is positive (semi-)definite
\item \label{item:lem:eq:2}If ${\bm A}_{1,1}$ is positive definite, then ${\bm A}$ is positive (semi-)definite if and only if 
\[
{\bm A}_{2,2} - {\bm A}_{1,2}^{\trans}{\bm A}_{1,1}^{-1} {\bm A}_{1,2}
\]
is positive (semi-)definite
\item \label{item:lem:eq:3} Let ${\bf A}_{2,2}^{g}$ denote a generalised inverse of ${\bf A}_{2,2}$, i.e., ${\bf A}_{2,2}^{g}$ is a $m\times m $ matrix which satisfies 
\[
{\bf A}_{2,2}{\bf A}_{2,2}^{g}{\bf A}_{2,2} = {\bf A}_{2,2}.
\]
The matrix ${\bm A}$ is positive semi-definite if and only if the matrices
$
{\bf A}_{2,2}
$
and\\
$
{{\bm A}_{1,1} - {\bm A}_{1,2}{\bm A}_{2,2}^{g} {\bm A}_{1,2}^{\trans}}
$
are positive semi-definite,
and
\[
({\bf I} - {\bf A}_{2,2} {\bf A}_{2,2}^{g}){\bf A}_{1,2}^{\trans} = \0,
\]
i.e.,
the span of the column vectors of ${\bm A}_{1,2}$ is contained in the span of the column vectors of ${\bm A}_{1,1}$.
\end{enumerate}
\end{lemma}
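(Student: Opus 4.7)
My plan is to prove all three parts uniformly via a block $LDL^\top$-type factorization combined with the observation that a congruence by an invertible matrix preserves positive (semi-)definiteness.

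For part \ref{item:lem:eq:1}, I would introduce the invertible matrix
\[
{\bm L} = \begin{pmatrix} \I_n & {\bm A}_{1,2}{\bm A}_{2,2}^{-1} \\ \0 & \I_m \end{pmatrix},
\]
and verify by direct block multiplication the identity
\[
{\bm A} = {\bm L} \begin{pmatrix} {\bm A}_{1,1} - {\bm A}_{1,2}{\bm A}_{2,2}^{-1}{\bm A}_{1,2}^{\trans} & \0 \\ \0 & {\bm A}_{2,2} \end{pmatrix} {\bm L}^{\trans}.
\]
Since ${\bm L}$ is invertible (upper triangular with identity blocks on the diagonal), the map ${\bm M}\mapsto {\bm L}{\bm M}{\bm L}^{\trans}$ is a congruence and hence preserves the inertia. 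Because ${\bm A}_{2,2}$ is positive definite, the middle block-diagonal matrix is positive (semi-)definite if and only if the Schur complement ${\bm A}_{1,1}-{\bm A}_{1,2}{\bm A}_{2,2}^{-1}{\bm A}_{1,2}^{\trans}$ is; combining these two equivalences yields the claim. Part \ref{item:lem:eq:2} is then obtained by the same argument after permuting the two block rows and columns (a congruence by a permutation), so the work is identical up to relabeling.

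For part \ref{item:lem:eq:3}, I expect the main obstacle: the generalized inverse ${\bm A}_{2,2}^{g}$ is no longer a right/left inverse, so the clean congruence in (i) breaks down unless the columns of ${\bm A}_{1,2}^{\trans}$ happen to lie in the range of ${\bm A}_{2,2}$. The idea is therefore to impose the range condition $(\I - {\bm A}_{2,2}{\bm A}_{2,2}^{g}){\bm A}_{1,2}^{\trans} = \0$ upfront and then mimic the above factorization. Set ${\bm L}$ as in part (i) but with ${\bm A}_{2,2}^{-1}$ replaced by ${\bm A}_{2,2}^{g}$; using the range condition (which says ${\bm A}_{2,2}{\bm A}_{2,2}^{g}{\bm A}_{1,2}^{\trans} = {\bm A}_{1,2}^{\trans}$), and together with the defining property ${\bm A}_{2,2}{\bm A}_{2,2}^{g}{\bm A}_{2,2}={\bm A}_{2,2}$, one verifies the same block factorization. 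This gives the ``if'' direction: if ${\bm A}_{2,2}$ and the generalized Schur complement are both positive semi-definite and the range condition holds, then ${\bm A}$ is positive semi-definite.

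For the converse in \ref{item:lem:eq:3}, I would argue as follows. If ${\bm A}$ is positive semi-definite, restricting the quadratic form to vectors of the form $(0,{\bm y})$ immediately shows ${\bm A}_{2,2}\succeq 0$. The range condition is then a necessary consequence of positive semi-definiteness: if some vector ${\bm v}$ lay in $\ker {\bm A}_{2,2}\cap \mathrm{range}({\bm A}_{1,2}^{\trans})^{\perp,c}$, testing ${\bm A}$ against $(\x, t{\bm v})$ and letting $t\to\pm\infty$ would produce a negative value, a contradiction; formally, one uses that for any $({\bm x},{\bm y})$, $\langle (\x,\y),{\bm A}(\x,\y)\rangle$ must be bounded below on rays, which forces the cross terms involving $\ker {\bm A}_{2,2}$ to vanish. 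Once the range condition is established, the block factorization above is valid, and the positive semi-definiteness of the generalized Schur complement then follows by testing the factored form against vectors of the shape $({\bm x}, -{\bm A}_{2,2}^{g}{\bm A}_{1,2}^{\trans}{\bm x})$. The only genuinely delicate step is the necessity of the range condition; everything else is linear-algebraic bookkeeping with the block factorization.
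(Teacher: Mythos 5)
Your proof is correct, but it is a genuinely different route from the paper's: the paper does not prove this lemma at all, it simply cites Theorems 1.12 and 1.20 of Zhang's book on Schur complements, whereas you reconstruct the standard arguments behind those citations. For parts (i) and (ii) your congruence identity ${\bm A} = {\bm L}\,\mathrm{diag}\big({\bm A}_{1,1}-{\bm A}_{1,2}{\bm A}_{2,2}^{-1}{\bm A}_{1,2}^{\trans},\,{\bm A}_{2,2}\big){\bm L}^{\trans}$ with the unit upper-triangular ${\bm L}$ is exactly the classical Schur-complement factorization, and the inertia argument is complete. Part (iii) is essentially Albert's theorem, and your outline (necessity of ${\bm A}_{2,2}\succeq 0$ by restriction, necessity of the range condition by the ray argument $({\bm x},t{\bm v})$ with ${\bm v}\in\ker{\bm A}_{2,2}$, then the generalized factorization) is sound; the one detail you gloss over is that a generalized inverse of the symmetric matrix ${\bm A}_{2,2}$ need not itself be symmetric, so identities such as ${\bm A}_{1,2}{\bm A}_{2,2}^{g}{\bm A}_{2,2}={\bm A}_{1,2}$ and the symmetry and $g$-inverse-independence of ${\bm A}_{1,2}{\bm A}_{2,2}^{g}{\bm A}_{1,2}^{\trans}$ do not follow from the range condition by transposition alone; they are most cleanly obtained by writing ${\bm A}_{1,2}^{\trans}={\bm A}_{2,2}{\bm B}$ (which the range condition permits), so that ${\bm A}_{1,2}{\bm A}_{2,2}^{g}{\bm A}_{1,2}^{\trans}={\bm B}^{\trans}{\bm A}_{2,2}{\bm B}$. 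With that small repair your argument is a complete, self-contained proof; what it buys over the paper's approach is independence from the external reference, while the citation buys brevity and, implicitly, the full generality of Albert's theorem without having to police these generalized-inverse subtleties.
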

\begin{proof}
The statements \ref{item:lem:eq:1} and \ref{item:lem:eq:2} follow from Theorem 1.12 in \cite{Zhang2005}. Statement \ref{item:lem:eq:3} corresponds to Theorem 1.20 in the same reference. \qed
\end{proof}


\section{Auxiliary material on stochastic analysis}\label{sec:stochastic:analysis}
In this section we provide a brief overview of the general framework used in the ergodicity proofs and derivation of convergence rate in \cref{sec:ergodic:GLE}. For a comprehensive overview we refer to the review articles  \cite{Mattingly2002,Rey-Bellet2006a,Lelievre2016a}. \\

Consider an SDE defined on the domain $\xDomain = \mathbb{T}^{n_{1}} \times \mathbb{R}^{n_{2}}, n = n_{1}+n_{2}\in \mathbb{N}$ which is of the form 
\begin{equation}\label{eq:generic:SDE}
\dd X = \drift(X) \dd t+ \diffusion(X) \dd \W, ~ X(0) \sim \mu_{0},
\end{equation}
with smooth coefficients $\drift \in \mathcal{C}^{\infty}( \xDomain, \RR^{n}), \diffusion = [\diffusion_{i}]_{1\leq i \leq n} \in \mathcal{C}^{\infty}(\xDomain,\RR^{n\times n})$, and  initial distribution $\mu_{0}$. In order to simplify the presentation we further assume that the diffusion coefficient $\diffusion$ is such that the It\^o and Stratonovich interpretation of \cref{eq:generic:SDE} coincide, i.e., 
\[
\nabla \cdot \left ( \diffusion\, \diffusion^{\trans} \right )- \diffusion\, \nabla \cdot \diffusion^{\trans} \equiv \0.
\]
Let  further $\Lc$ denote the associated infinitesimal generator of \cref{eq:generic:SDE}, i.e.,
\begin{equation}
\Lc = \drift(X) \cdot \nabla  + \diffusion(X) : \nabla^{2},
\end{equation}
when considered as an operator on the core $\mathcal{C}^{\infty}(\xDomain,\RR)$, and let $\Lc^{\dagger}$ denote the formal adjoint of $\Lc$, i.e., the Fokker-Planck operator associated with the SDE \cref{eq:generic:SDE}. Furthermore, let $e^{t\Lc},e^{t\Lc^{\dagger}}$ denote the associated semigroup operators of $\Lc$, and $\Lc^{\dagger}$, respectively, i.e.,
\begin{equation}
\forall \varphi  \in \mathcal{C}^{\infty}(\xDomain,\RR) : e^{t\Lc} \varphi(x) = \EE[ \varphi(X(t)) \given X(0) = x],\footnote{The expectation is taken with respect to the Brownian motion $\W$.}
\end{equation}
for (Lebesgue-)almost all $x \in \RR^{n}$, and
\[
\int \left ( e^{t\Lc} \varphi \right )(x) \mu_{0}(\dd x) = \int \varphi(x) \left ( e^{t\Lc^{\dagger}} \mu_{0}\right ) (\dd x).
\]
\begin{definition}\label{def:K:infty}
For a given function $\Kc \in \mathcal{C}^{\infty}(\xDomain, [1,\infty))$ which is such that $\Kc(\x) \rightarrow \infty$ as $\norm{\x}\rightarrow \infty$, 
define
\begin{equation}
\norm{\varphi}_{L^{\infty}_{\Kc}} := \norm*{ \frac{\varphi}{\Kc}}_{\infty}, ~\varphi : \xDomain \rightarrow \RR~\text{measureable}.
\end{equation}
We denote by
\begin{equation}
L^{\infty}_{\Kc}(\xDomain) := \left \{  \varphi \text{ measurable } :\norm{\varphi}_{L^{\infty}_{\Kc}}<\infty  \right \}
\end{equation}
the set of measurable functions for which the ratio $\frac{\varphi}{\Kc}$ is bounded. 
\end{definition}
It can be easily verified that $\norm{\varphi}_{L^{\infty}_{\Kc}}$ defines a norm and that $ L^{\infty}_{\Kc}(\xDomain) $ equipped with the norm  $\norm{\varphi}_{L^{\infty}_{\Kc}} $ can be associated with a Banach space, which we denote by $\left ( L^{\infty}_{\Kc}(\xDomain) , \norm{\cdot}_{L^{\infty}_{\Kc}} \right )$.

Throughout this article we use Lyapunov function techniques to show (geometric) ergodicity of SDEs of the generic form \cref{eq:generic:SDE}. More specifically, we follow the standard recipe for proofs of exponential convergences of the semigroup operator $e^{t\Lc}$ in weighted $L^{\infty}$ spaces as outlined, e.g., in \cite{meyn1993stability,Mattingly2002,Rey-Bellet2006a,Lelievre2016a}, that is we show that a suitable Lyapunov condition (\cref{as:lyapunov:inf}) and a minorization condition (\cref{as:minorization:inf}) are satisfied:
\begin{assumption}[Infinitesimal Lyapunov condition]\label{as:lyapunov:inf}
There is a function $\Kc \in \mathcal{C}^{\infty}(\xDomain, [1,\infty))$ with $\lim_{\left \|\x \right\|\rightarrow \infty}\mathcal{K}(x) = \infty$, 
and real numbers $a\in (0,\infty), b \in \RR$ such that,
\begin{equation}\label{eq:lyapunov:inf}
\mathcal{L} \mathcal{K} \leq - a \mathcal{K}+b.
\end{equation}
\end{assumption}
\begin{assumption}[Minorization condition]\label{as:minorization:inf}
For some $t^{\prime}>0$ there exists a constant $\eta \in(0,1)$ and a probability measure $\nu$ such that
\[
\inf_{x\in \mathcal{C}} e^{t^{\prime}\Lc^{\dagger}}\delta_{x}(\dd  y) \geq \eta \nu(\dd y) 
\]
where $\mathcal{C} = \{ x \in \xDomain  : \; \Kc(x) \leq \Kc_{\rm max} \}$ for some 
$\Kc_{\rm max} > 1+ 2b/a,$
where $a,b$ are the same constants as in~\cref{eq:lyapunov:inf}.
\end{assumption}
If the above assumptions are satisfied, then the following proposition, which follows from the arguments in \cite{Lelievre2016a} (see also the other above mentioned references), allows to derive exponential decay estimates in the respective weighted $L^{\infty}$ space associated with the Lyapunov function $\Kc$. 
\begin{proposition}[Geometric ergodicity, \cite{Lelievre2016a}]\label{thm:geometric:ergodic}
Let \cref{as:lyapunov:inf} and \cref{as:minorization:inf} hold. The solution of the SDE \cref{eq:generic:SDE} admits a unique invariant probability measure $\pi$ such that 
\begin{enumerate}[label=(\roman*)]
\item\label{it:prop:geo:1} there exist positive constant $\lambda, \widetilde{C}$ so that for any $\varphi \in L^{\infty}_{\Kc}(\xDomain)$
\begin{equation}\label{eq:exp:conv}
\norm*{ e^{t\Lc} \varphi  -\EE_{\pi}\varphi }_{L^{\infty}_{\Kc}} \leq \widetilde{C}e^{-t\lambda} \norm*{\varphi - \EE_{\pi}\varphi }_{L^{\infty}_{\Kc}}.
\end{equation}
\item \label{it:prop:geo:2} 
\begin{equation}\label{eq:K:L1}
\int_{\xDomain} \Kc \dd \pi < \infty .
\end{equation}
\end{enumerate}
\end{proposition}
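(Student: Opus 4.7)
The plan is to follow the classical Harris-type strategy---in the spirit of \cite{Mattingly2002,Hairer2011a} and as developed for SDE semigroups in \cite{Lelievre2016a}---which converts the pair of conditions (infinitesimal Lyapunov + minorization on a sublevel set) into a geometric contraction of the semigroup $e^{t\Lc}$ in $L^{\infty}_{\Kc}$, from which existence/uniqueness of $\pi$, the convergence \eqref{eq:exp:conv}, and integrability \eqref{eq:K:L1} all follow.

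First I would upgrade the infinitesimal Lyapunov inequality of \cref{as:lyapunov:inf} into a finite-time estimate on the semigroup. After a standard localization by stopping times to handle possible unboundedness of $\Kc$, Dynkin's formula applied to $\Kc$ together with $\Lc\Kc \leq -a\Kc + b$ shows that $f(t) := e^{t\Lc}\Kc(x)$ satisfies $f'(t) \leq -a f(t) + b$ with $f(0)=\Kc(x)$, and Gronwall yields
\begin{equation*}
e^{t\Lc}\Kc(x) \leq e^{-at}\Kc(x) + \frac{b}{a}\bigl(1-e^{-at}\bigr), \qquad t \geq 0.
\end{equation*}
Choose $T \geq t'$ (with $t'$ from \cref{as:minorization:inf}) large enough that $\gamma := e^{-aT} < 1$ and such that $e^{T\Lc}\Kc \leq \gamma\Kc + K$ on $\xDomain$ for some $K<\infty$. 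The sublevel set $\mathcal{C}=\{\Kc \leq \Kc_{\max}\}$ with $\Kc_{\max} > 1 + 2b/a$ is then a small set for the sampled chain $(X_{nT})_{n\in\NN}$, since \cref{as:minorization:inf} already supplies a uniform minorization on $\mathcal{C}$ (after possibly composing the minorizing transition at $t'$ with $e^{(T-t')\Lc^{\dagger}}$ to align the time horizons).

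Second I would combine this discrete-time Lyapunov bound with the minorization condition to invoke a Harris-type theorem (cf.\ Theorem~1.2 of \cite{Hairer2011a}): there exist $\beta>0$ and $\alpha\in(0,1)$ such that the weighted total-variation-type seminorm
\begin{equation*}
\|\mu\|_{\beta} := \int_{\xDomain} \bigl(1+\beta\Kc\bigr)\,\dd|\mu|,
\end{equation*}
defined on signed measures of vanishing mean, is contracted by the discrete transition kernel $e^{T\Lc^{\dagger}}$, namely $\|\mu\, e^{T\Lc^{\dagger}}\|_{\beta} \leq \alpha \|\mu\|_{\beta}$. Iterating $(\delta_x\, e^{nT\Lc^{\dagger}})_{n\in \NN}$ yields a Cauchy sequence in $\|\cdot\|_{\beta}$, whose limit is an invariant probability measure $\pi$; uniqueness is immediate from applying the contraction to the difference of two invariants. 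Dualizing, the contraction of $\|\cdot\|_{\beta}$ on measures is equivalent to an exponential decay of $\|e^{nT\Lc}\varphi - \EE_{\pi}\varphi\|_{L^{\infty}_{1+\beta\Kc}}$ for $\varphi\in L^{\infty}_{1+\beta\Kc}$, and since $\Kc \geq 1$ the norms $\|\cdot\|_{L^{\infty}_{1+\beta\Kc}}$ and $\|\cdot\|_{L^{\infty}_{\Kc}}$ are equivalent, yielding \eqref{eq:exp:conv} along the discrete times $t_n = nT$ with rate $\lambda_{\rm disc} = -T^{-1}\log\alpha$.

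Third I would promote the discrete bound to continuous time. Writing $t = nT + s$ with $s \in [0,T)$ and using the semigroup property together with the uniform bound $\|e^{s\Lc}\|_{\mathcal{B}(L^{\infty}_{\Kc})} \leq C_T$ (a direct consequence of the finite-time Lyapunov estimate derived in the first step), one interpolates to obtain \eqref{eq:exp:conv} for all $t\geq 0$, after absorbing $C_T$ into the prefactor $\widetilde C$. Finally \eqref{eq:K:L1} follows from invariance $\pi = \pi\, e^{T\Lc^{\dagger}}$: integrating the discrete Lyapunov inequality $e^{T\Lc}\Kc \leq \gamma\Kc + K$ against $\pi$ gives $\int \Kc\,\dd\pi \leq K/(1-\gamma) < \infty$, where the manipulation is justified by first truncating to $\Kc\wedge M$ and applying monotone convergence as $M\to\infty$.

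The main obstacle will be to execute the Harris contraction argument precisely: tuning the parameter $\beta>0$ and the discretization time $T$ so that the Lyapunov geometric factor $\gamma$ and the minorization mass $\eta$ combine to yield $\alpha<1$ (which is the content of the argument in \cite{Hairer2011a}), and verifying that the duality between $\|\cdot\|_{\beta}$ on measures and $\|\cdot\|_{L^{\infty}_{\Kc}}$ on observables produces constants consistent with the stated form of \eqref{eq:exp:conv}. A subsidiary technicality is the Dynkin/Gronwall step of the first paragraph, which requires that $\Kc$ lie in a suitable extended domain of $\Lc$; this is routine given the smoothness of $\Kc$ and of the SDE coefficients, but requires localization by stopping times $\tau_R = \inf\{t : \Kc(X_t) \geq R\}$ and passage to the limit $R\to\infty$.
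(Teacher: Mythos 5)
The paper does not actually prove this proposition but quotes it from \cite{Lelievre2016a} (resting on the arguments of \cite{Mattingly2002,Hairer2011a}), and your Harris-type proof---finite-time drift via Dynkin/Gronwall, minorization on the sublevel set after aligning the time horizons, weighted total-variation contraction, dualization to $L^{\infty}_{\Kc}$, and interpolation to continuous time---is exactly the argument those references supply, so it is essentially the same approach and is correct. The one step I would tighten is the justification of \eqref{eq:K:L1}: the pointwise bound $e^{T\Lc}(\Kc\wedge M)\leq \gamma(\Kc\wedge M)+K$ can fail on $\{\Kc>M\}$, so it is cleaner to get $\int_{\xDomain}\Kc\,\dd\pi\leq b/a$ by Fatou's lemma along the convergent sequence $\delta_{x}e^{nT\Lc^{\dagger}}\to\pi$ together with $e^{nT\Lc}\Kc(x)\leq e^{-anT}\Kc(x)+b/a$, or simply to note that the Harris fixed point is constructed in the space of probability measures with finite $\Kc$-moment.
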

If for the solution of \cref{eq:generic:SDE}  the implications of  \cref{thm:geometric:ergodic} hold we also say 
that the solution $X$ of  \cref{eq:generic:SDE}  is {\em geometrically ergodic}. In the main body of this article we use \cref{thm:geometric:ergodic} to derive exponential decay estimates of the form \cref{eq:geo:conv:gle} in \cref{thm:ergo:bounded:1,thm:ergo:unbounded:2,thm:ergo:unbounded:3,thm:ergo:GLEq}. In these theorems \cref{as:lyapunov:inf} can be directly shown to hold by explicitly constructing a suitable Lyapunov function $\Kc$ satisfying \cref{eq:lyapunov:inf} (see \cref{prop:lya:torus,prop:lya:R,prop:lya:torus-2}). A very common way to show \cref{as:minorization:inf} is by showing (i) that the transition kernel associated with the SDE \cref{eq:generic:SDE} is smooth as specified in  \cref{as:smooth:trans}, and (ii) that the SDE \cref{eq:generic:SDE} is controllable as specified in \cref{as:control}. By virtue \cref{lem:HiMaSt} it then follows that a minorization condition holds.


\begin{assumption}\label{as:smooth:trans}
For any $t>0$ the transition kernel associated with the SDE \cref{eq:generic:SDE} possesses a density $p_{t} (x, y)$, i.e., 
\[
 \forall \,x \in \xDomain:~(e^{t\Lc^{\dagger}}\delta_{x})(A)=  \int_{A} p_{t}(x,y)dy,  ~ A \subset \xDomain, \; A \text{ measurable}.
\]
and $p_{t}(x,y)$ is jointly continuous in $(x,y)\in \xDomain \times \xDomain$.
\end{assumption}
\begin{assumption}\label{as:control}
There is a $t_{\max}>0$ so that for any $x^{-}, x^{+} \in \xDomain$, there is a $t>0$, with $t \leq t_{\max}$,so that the control problem  
\begin{equation}
\begin{aligned}
\dot{\tilde{X}} &= \drift(\tilde{X}) + \diffusion(\tilde{X}) u,
\end{aligned}
\end{equation}
subject to
\[
\tilde{X}(0)= x^{-}, \text{ and } \tilde{X}(t)=x^{+},
\]
has a smooth solution $u \in \mathcal{C}^{1}([0,t_{\max}],\xDomain)$. 
\end{assumption}
\begin{lemma}[\cite{Mattingly2002}]\label{lem:HiMaSt}
If \cref{as:smooth:trans} and \cref{as:control} are satisfied, then also \cref{as:minorization:inf} holds.
\end{lemma}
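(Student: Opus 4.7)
The plan is to construct a time $t' > 0$, a positive constant $\eta$, and a probability measure $\nu$ on $\xDomain$ so that $e^{t'\Lc^{\dagger}}\delta_{x}(\dd y) \ge \eta\,\nu(\dd y)$ uniformly over $x$ in the sublevel set $\mathcal{C} = \{x \in \xDomain : \Kc(x) \le \Kc_{\max}\}$. Since $\Kc$ is proper by Assumption~\ref{as:lyapunov:inf}, $\mathcal{C}$ is compact. I will take $\nu$ to be the normalized Lebesgue measure on a small open ball $B \ni y_{0}$ around a fixed reference point $y_{0} \in \xDomain$; the minorization then reduces to the pointwise density bound
\[
p_{t'}(x,y) \ge c \quad \text{for all }(x,y) \in \mathcal{C} \times B,
\]
where $p_{t'}$ is the jointly continuous density supplied by Assumption~\ref{as:smooth:trans}.

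The topological step is to combine Assumption~\ref{as:control} with the Stroock--Varadhan support theorem \cite{stroock1972support} to show that, for a single time $t'$ independent of $(x^{-},y^{+}) \in \mathcal{C} \times \xDomain$, every $y^{+}$ belongs to the topological support of $e^{t'\Lc^{\dagger}}\delta_{x^{-}}$. Assumption~\ref{as:control} provides a smooth control of length $t(x^{-},y^{+}) \le t_{\max}$ steering $x^{-}$ to $y^{+}$; setting $t' := 2 t_{\max}$ and concatenating this control with a round-trip control from $y^{+}$ to $y^{+}$ of the appropriate residual length (whose existence and length adjustment follow by applying Assumption~\ref{as:control} iteratively through intermediate waypoints) yields a single control of length exactly $t'$ from $x^{-}$ to $y^{+}$. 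Stroock--Varadhan then gives $y^{+} \in \mathrm{supp}(e^{t'\Lc^{\dagger}}\delta_{x^{-}})$, i.e., $e^{t'\Lc^{\dagger}}\delta_{x^{-}}$ has full topological support on $\xDomain$ for every $x^{-} \in \mathcal{C}$.

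The analytic step is to upgrade this topological conclusion to the required pointwise density bound using the joint continuity of $p_{t'}$ (Assumption~\ref{as:smooth:trans}) and the compactness of $\mathcal{C}$. The open set $\{(z,y) \in \xDomain^{2} : p_{t'}(z,y) > 0\}$ is dense in each slice $\{z\} \times \xDomain$ by the topological step. Chaining via Chapman--Kolmogorov,
\[
p_{Nt'}(x,y_{0}) = \int \cdots \int p_{t'}(x,z_{1})\,p_{t'}(z_{1},z_{2}) \cdots p_{t'}(z_{N-1},y_{0})\,\dd z_{1}\cdots\dd z_{N-1},
\]
and choosing the intermediate domain of integration to lie in a tube of positivity for each factor, one concludes $p_{Nt'}(x,y_{0}) > 0$ for every $x \in \mathcal{C}$, provided $N$ is chosen large enough so that such a positivity tube can be threaded. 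Compactness of $\mathcal{C}$ and joint continuity of $p_{Nt'}$ then give $c_{0} := \inf_{x \in \mathcal{C}} p_{Nt'}(x,y_{0}) > 0$, and uniform continuity of $p_{Nt'}$ on the compact product $\mathcal{C} \times \overline{B}$ yields $p_{Nt'}(x,y) \ge c_{0}/2$ on $\mathcal{C}\times B$ for a sufficiently small open ball $B \ni y_{0}$. Taking $\nu$ uniform on $B$ and $\eta = (c_{0}/2)|B|$ completes the construction (with the final time being $Nt'$ rather than $t'$).

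The principal obstacle is the temporal bridging in the topological step: Assumption~\ref{as:control} delivers a control time only bounded by $t_{\max}$, so arranging a single time at which every pair $(x^{-},y^{+})$ is simultaneously reachable requires padding by auxiliary return excursions. A secondary subtlety is that the Stroock--Varadhan conclusion is topological and must be promoted to a pointwise density lower bound; this is where the Chapman--Kolmogorov chaining and the openness of the positivity set $\{p_{t'}>0\}$ enter. Once these are in place, the remaining continuity-plus-compactness manipulations are standard (cf.~\cite{Mattingly2002}).
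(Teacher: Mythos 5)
The paper itself gives no proof of \cref{lem:HiMaSt}; it is quoted from \cite{Mattingly2002}, and your overall route (controllability plus the Stroock--Varadhan support theorem, then Chapman--Kolmogorov chaining, joint continuity of the density and compactness of the sublevel set to obtain a uniform lower bound against normalized Lebesgue measure on a small ball) is exactly the standard argument behind that citation. There are, however, two genuine gaps in your version. The first is the time-padding step. \cref{as:control} only guarantees, for each pair $(x^{-},x^{+})$, \emph{some} control time $t\leq t_{\max}$; it gives no means of prescribing the duration of a control. Hence the ``round-trip control from $y^{+}$ to $y^{+}$ of the appropriate residual length'' is not supplied by the assumption: each excursion you insert again comes with a duration you do not choose, and iterating through waypoints only produces a sum of such unchosen durations, with no intermediate-value or length-adjustment mechanism to make the total equal $2t_{\max}$ exactly. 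In \cite{Mattingly2002} the corresponding hypothesis is formulated with a time that is \emph{uniform} over the compact set, and when the lemma is used in this paper (\cref{prop:gle:minorization:3}) the control problem is in fact solved for an arbitrary prescribed horizon $T$, which is what makes a single $t'$ available. Your proof needs either that stronger (fixed-time) controllability or a different device to synchronize times; as written, this step fails.

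The second gap is the choice of the reference point $y_{0}$. Full topological support of $e^{t'\Lc^{\dagger}}\delta_{x}$ only says that the open set $\{y: p_{t'}(x,y)>0\}$ is \emph{dense}; it does not give positivity of the density at a prescribed point. For an arbitrarily fixed $y_{0}$ the set $\{z: p_{t'}(z,y_{0})>0\}$ may be empty, in which case the last factor of your ``positivity tube'' cannot be threaded and $p_{Nt'}(\cdot,y_{0})$ vanishes identically, no matter how large $N$ is. The repair is easy: choose any $\bar{x}\in\xDomain$ and any $y_{0}$ with $p_{t'}(\bar{x},y_{0})>0$ (such a point exists since $p_{t'}(\bar{x},\cdot)$ integrates to one). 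Then $\{z: p_{t'}(z,y_{0})>0\}$ is open and nonempty by joint continuity, so for every $x$ it meets the open \emph{dense} set $\{z: p_{t'}(x,z)>0\}$ in a set of positive Lebesgue measure, and already $N=2$ yields $p_{2t'}(x,y_{0})>0$ for all $x\in\mathcal{C}$; your concluding continuity-plus-compactness argument on $\mathcal{C}\times\overline{B}$ then goes through unchanged (the minorization time being $2t'$, which is all \cref{as:minorization:inf} requires).
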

\cref{as:smooth:trans} follows directly from hypoellipticity of the operator $\partial_{t} - \Lc^{\dagger}$  (see e.g. \cite{Rey-Bellet2006a,Pavliotis2014}, for a precise definition of hypoellipticity).
A common way to establish hypoellipticity of a differential operators is via H\"ormander's theorem (\cite{hormander1985analysis}, Theorem 22.2.1, on page 353). The following proposition is an adaption of H\"ormander's theorem to the parabolic differential operator $\partial_{t} -\Lc^{\dagger}$:
\begin{proposition}\label{thm:hormander}
Let $\drift$ and $\diffusion$ be the drift coefficient and the diffusion coefficient of the SDE \cref{eq:generic:SDE}, respectively. Let $\diffusion_{0} := \drift$. Iteratively define a collection of vector fields by
\begin{equation}\label{eq:hormander:iterate}
\mathscr{V}_{0} = \{ \diffusion_{i} :  i \geq  1 \},
\hspace{0.1in} \mathscr{V}_{k+1} =\mathscr{V}_{k}  \cup \{ [{\bm v},\diffusion_{i}] : {\bm v} \in \mathscr{V}_{k}, 0 \leq i \leq n\}.
\end{equation}
where 
\[
[{\bm X}, {\bm Y}] = (\nabla {\bm Y} ) {\bm X} -( \nabla {\bm X}) {\bm Y},  
\]
denotes the commutator of vector fields ${\bm X},{\bm Y} \in \mathcal{C}^{\infty}(\xDomain,\RR^{n})$ and $(\nabla {\bm X}),(\nabla {\bm Y})$ their Jacobian matrices.
If
\begin{equation}\label{eq:hormander:cond}
\forall \x\in\mathbb{R}^{n},~~ \lspan \left \{{\bm v}(\x) : {\bm v} \in \bigcup_{k \in \mathbb{N}}\mathscr{V}_{k} \right \} = \mathbb{R}^{n},
\end{equation}
we say that the SDE \cref{eq:generic:SDE} satisfies the parabolic H\"ormander condition, and it follows that the operator $\partial_{t} - \Lc^{\dagger}$ is hypoelliptic.
\end{proposition}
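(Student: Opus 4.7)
The plan is to reduce the statement to the classical sum-of-squares formulation of H\"ormander's theorem, as stated in \cite{hormander1985analysis}. First, I would rewrite the generator as
\[
\Lc = X_0 + \tfrac{1}{2}\sum_{i=1}^{n} X_i^{2},
\]
where $X_0 = \drift \cdot \nabla$ is the vector field associated with the drift and $X_i = \diffusion_i \cdot \nabla$ for $i\geq 1$ are the vector fields associated with the columns $\diffusion_i$ of $\diffusion$. The standing assumption in the setup of \cref{eq:generic:SDE} that the It\^o and Stratonovich interpretations coincide is precisely what ensures no additional first-order correction appears in this rewriting.

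Next, I would compute $\Lc^{\dagger}$ via integration by parts. A standard calculation gives
\[
\Lc^{\dagger} = \tfrac{1}{2}\sum_{i=1}^{n} X_i^{2} + \widetilde X_0 + c,
\]
where $\widetilde X_0$ is a smooth vector field differing from $-X_0$ by a sum of divergence-type corrections built from the $\diffusion_i$, and $c$ is a smooth function. The crucial point is that the second-order vector fields $X_i$ appearing in the squared part are unchanged, so that $\Lc^{\dagger}$ is again of H\"ormander sum-of-squares type. Viewing $\partial_t-\Lc^{\dagger}$ as an operator on the enlarged manifold $\mathbb{R}\times\xDomain$, I would then write
\[
\partial_t - \Lc^{\dagger} = Z_0 - \tfrac{1}{2}\sum_{i=1}^{n} Z_i^{2} - c,
\qquad Z_0 := \partial_t - \widetilde X_0,\quad Z_i := X_i \text{ (lifted trivially)},
\]
so H\"ormander's theorem is applicable provided the Lie algebra generated by $\{Z_0,Z_1,\dots,Z_n\}$ spans $\mathbb{R}^{1+n}$ at every $(t,x)$.

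Finally, I would show that this extended spanning condition is equivalent to \cref{eq:hormander:cond}. Because the $\partial_t$-component of $Z_0$ is the constant $1$, every commutator involving $Z_0$ kills the $\partial_t$ direction and produces only a vector field tangent to the spatial slice, namely $-[\widetilde X_0, X_i]$. Since $\widetilde X_0$ and $X_0$ differ only by a smooth vector field that itself lies in the span of the $X_i$ and their iterated brackets (via the divergence terms built from the $\diffusion_i$), the spatial vector fields obtained by iterated bracketing in the extended space are precisely those collected by the inductive definition of $\mathscr{V}_k$ in \cref{eq:hormander:iterate}. Hence \cref{eq:hormander:cond} delivers $\mathbb{R}^n$ in the spatial directions at every $\x$, and adjoining $Z_0$ (whose $\partial_t$-component is non-zero) yields $\mathbb{R}^{1+n}$ at every $(t,\x)$. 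H\"ormander's theorem then gives hypoellipticity of $\partial_t-\Lc^{\dagger}$.

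The main obstacle is the bookkeeping between $\Lc$ and $\Lc^{\dagger}$: one must verify that passing to the adjoint preserves the second-order vector fields and modifies the first-order part only by a smooth vector field built from the $X_i$, so that the parabolic bracket condition on $\Lc$ coincides with the corresponding condition on $\Lc^{\dagger}$. Beyond this algebraic check, no new analytic input is required past citing H\"ormander's theorem.
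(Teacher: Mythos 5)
Your proposal is correct, and in effect it supplies the reduction that the paper compresses into a citation: the paper gives no written proof of \cref{thm:hormander}, it simply invokes Theorem 22.2.1 of H\"ormander's book, and your sum-of-squares rewriting of $\Lc$, the computation of $\Lc^{\dagger}$, and the lifting of $\partial_t-\Lc^{\dagger}$ to $\RR\times\xDomain$ are exactly the standard steps that citation stands for. Two points in your sketch deserve to be made explicit. First, the standing assumption that the It\^o and Stratonovich forms of \cref{eq:generic:SDE} coincide is doing double duty: it is equivalent to $\sum_{k}(\nabla\diffusion_{k})\diffusion_{k}\equiv\0$, which both lets you write $\Lc=X_{0}+\tfrac{1}{2}\sum_{k}X_{k}^{2}$ with $X_{0}=\drift\cdot\nabla$ and no first-order correction, and guarantees that the first-order part $\widetilde X_{0}$ of $\Lc^{\dagger}$ differs from $-X_{0}$ exactly by $\sum_{k}(\nabla\cdot\diffusion_{k})\diffusion_{k}$, i.e.\ by a field lying in the pointwise span of the columns $\diffusion_{k}$ alone (without that assumption the leftover term $(\nabla\diffusion_{k})\diffusion_{k}$ need not lie in any of the $\mathscr{V}_{k}$, and the exchange you rely on could fail). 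Second, your statement that the spatial fields produced by bracketing in the extended space are ``precisely'' those of \cref{eq:hormander:iterate} is an overstatement: they are not the same vector fields, but writing $\widetilde X_{0}=-X_{0}+\sum_{k}c_{k}X_{k}$ with $c_{k}=\nabla\cdot\diffusion_{k}$ and using $[\widetilde X_{0},X_{j}]=-[X_{0},X_{j}]+\sum_{k}\bigl(c_{k}[X_{k},X_{j}]-(X_{j}c_{k})X_{k}\bigr)$, a short induction shows the two bracket families have the same pointwise linear span, which is all that \cref{eq:hormander:cond} needs; with that induction spelled out, adjoining $Z_{0}=\partial_t-\widetilde X_{0}$ (the only generator with a nonzero $\partial_t$-component) gives full rank on $\RR^{1+n}$, and H\"ormander's theorem yields hypoellipticity of $\partial_t-\Lc^{\dagger}$ as claimed.
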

We use \cref{lem:HiMaSt} in the proof of \cref{prop:gle:minorization:3} in \cref{thm:ergo:unbounded:2}. For some instances of \cref{eq:gle:q} it is not easy to construct a suitable control $u$ such that \cref{as:control} is satisfied. In these cases we either show a minorization condition by explicitly constructing the minorizing measure $\nu$ in \cref{as:minorization:inf} if the right hand side of  \cref{eq:gle:q} can be decomposed into a linear and a bounded part (see \cref{thm:ergo:bounded:1}),  or by inferring the existence of a suitable minorizing measure by showing that the support of the SDE under consideration is equivalent to the support of another SDE satisfying a minorization condition via Girsanov's theorem (\cref{prop:minorization:R-2,thm:erg:gle:nonconst:coeff}). Girsanov's theorem provides conditions under which the path measures of two It\^o processes are mutually absolutely continuous, which in particular implies that at any time $t \geq 0$  the laws of these It\^o processes are equivalent. We will use Girsanov's theorem in \cref{sec:ergodic:GLE} in order to prove the minorization condition for GLEs which in a Markovian representation possess coefficients which depend on the configurational variable. Here we provide a version of Girsanov's theorem which is adapted to It\^o-diffusion processes.
\begin{proposition}[Girsanov's theorem, \cite{Oksendal2003}] \label{thm:girsanov:1}
Consider the two It\^o diffusion processes
\begin{align}
\dd X(t) &= \drift_{x}(X) \dd t + \diffusion(X)\dd \W(t); ~ X(0) = x_{0}, \label{eq:girs:ito:1}\\
\dd Y(t) &= \drift_{y}(Y) \dd t + \diffusion(Y)\dd \W(t); ~ Y(0) = x_{0}, \label{eq:girs:ito:2}
\end{align}
where $ x_{0} \in \xDomain$, $\W$ is a standard Wiener process in $\RR^{n}$, and $\drift_{x},\drift_{y} : \xDomain \rightarrow \RR^{n}$ and $\diffusion : \xDomain \rightarrow \RR^{n \times m}, m \in \mathbb{N}$, are such that there exist unique strong solutions $X, Y$ for \eqref{eq:girs:ito:1} and \eqref{eq:girs:ito:2}, respectively. If there is a function ${\bm u} \in \mathcal{C}(\xDomain,\RR^{n})$ such that
\[
 \drift_{x} -\drift_{y} = \diffusion {\bm u} 
\]
and ${\bm u} $ satisfies {\em Novikov's} condition
\begin{equation}\label{eq:novikov:cond}
\EE \left [ \exp \left (\frac{1}{2} \int_{0}^{T} \norm{{\bm u} (X(t))}_{2}^{2} \dd s \right ) \right ] < \infty.
\end{equation}
then the path measures of $X$ and $Y$ on any finite time interval are equivalent. In particular, the support of the law of $X(t)$ and the support of the law of $Y(t)$ coincide for any $t>0$.
\end{proposition}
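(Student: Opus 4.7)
The plan is to construct an explicit Radon--Nikodym derivative between the path measures of $X$ and $Y$ and to identify the new driving Brownian motion via L\'evy's characterization. First, I would introduce the stochastic exponential
\[
Z_t := \exp\!\left(-\int_0^t \mathbf{u}(X(s))^{\trans}\dd \W(s) - \tfrac{1}{2}\int_0^t \norm{\mathbf{u}(X(s))}_2^{2}\dd s \right),
\]
which by It\^o's formula is a continuous nonnegative local martingale under the reference measure $\PP$ on $[0,T]$.

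Second, I would use Novikov's condition \eqref{eq:novikov:cond} to upgrade $Z_t$ to a true martingale with $\EE[Z_T]=1$. This is the only delicate analytic step; the standard Kazamaki--Novikov argument localizes along stopping times $\tau_n\uparrow T$, writes $Z_{t\wedge\tau_n}$ as a product and uses H\"older's inequality together with the uniform integrability provided by the exponential-moment bound \eqref{eq:novikov:cond} to pass to the limit. With $\EE[Z_T]=1$ in hand, the measure $\dd \mathbb{Q} = Z_T \dd \PP$ on $\mathcal{F}_T$ is a probability measure equivalent to $\PP$ (equivalence uses $Z_T>0$ a.s., which follows from its exponential form).

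Third, I would invoke the multidimensional Girsanov transformation to conclude that
\[
\widetilde{\W}(t) := \W(t) + \int_0^t \mathbf{u}(X(s))\dd s,~~ t\in[0,T],
\]
is a standard Brownian motion under $\mathbb{Q}$, by verifying L\'evy's conditions: the quadratic (co)variation is preserved under an equivalent measure change, while the $\mathbb{Q}$-martingale property of $\widetilde{\W}$ is equivalent, by Bayes' formula, to $Z_t\widetilde{\W}(t)$ being a $\PP$-local martingale, which follows from a direct It\^o computation. Substituting $\dd \W = \dd \widetilde{\W} - \mathbf{u}(X)\dd t$ into \eqref{eq:girs:ito:1} and using the hypothesis $\drift_x - \drift_y = \diffusion\, \mathbf{u}$ gives
\[
\dd X(t) = \drift_y(X(t))\dd t + \diffusion(X(t)) \dd \widetilde{\W}(t),~~X(0)=x_0,
\]
so under $\mathbb{Q}$ the process $X$ solves the same SDE as $Y$ under $\PP$. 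Pathwise (weak) uniqueness, which is part of the standing hypothesis, then identifies the $\mathbb{Q}$-law of $X$ with the $\PP$-law of $Y$ on $\mathcal{C}([0,T],\xDomain)$.

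Finally, mutual absolute continuity $\PP\sim\mathbb{Q}$ (again because $Z_T>0$ a.s.) implies that $\law{X}^{\PP}$ and $\law{Y}^{\PP}$ have identical null sets on $\mathcal{C}([0,T],\xDomain)$, which, since both are Borel probability measures on a Polish space, forces their topological supports to coincide; projecting onto the time-$t$ marginal yields the claim about supports of $X(t)$ and $Y(t)$. The main obstacle is the martingale property $\EE[Z_T]=1$: everything else is either an algebraic manipulation of the SDE or a routine consequence of the change of measure once $\mathbb{Q}$ is well-defined, whereas Novikov's condition is precisely what is needed to rule out loss of mass in the stochastic exponential.
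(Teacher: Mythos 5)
The paper does not prove this proposition; it is quoted verbatim as a standard result from the cited reference (\O ksendal), so there is no in-paper argument to compare against. Your outline is the standard and correct textbook proof — exponential martingale, Novikov's criterion to get $\EE[Z_T]=1$, L\'evy characterization of the shifted Brownian motion, weak uniqueness to identify the $\mathbb{Q}$-law of $X$ with the $\PP$-law of $Y$, and equivalence of pushforwards to transfer the support statement to the time-$t$ marginals — and it fills in exactly the steps the paper delegates to the reference.
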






\end{document}